\numberwithin{equation}{section}
\def\fa{\mathfrak{a}}
\def\fh{\mathfrak{h}}
\def\fm{\mathfrak{m}}
\def\cK{\mathcal{K}}
\def\cU{\mathcal{U}}
\def\cI{\mathcal{I}}
\def\cT{\mathcal{T}}
\def\bC{\mathbb{C}}
\def\bN{\mathbb{N}}
\def\bZ{\mathbb{Z}}
\newcommand\ol{\overline}
\newcommand\wt{\widetilde}
\newtheorem{theo}{{Theorem}}[section]
\newtheorem{lemm}[theo]{Lemma}
\newtheorem{defi}[theo]{Definition}
\newtheorem{prop}[theo]{Proposition}
\begin{document}

\begin{frontmatter}



\title{Simple Harish-Chandra modules over the superconformal current algebra}
\author[1]{Yan He}
\ead{heyan913533012@163.com}

\author[2]{Dong Liu}
\ead{liudong@zjhu.edu.cn}

\author[3]{Yan Wang}
\ead{wangyan09@tju.edu.cn,  corresponding author}
\address[1]{Department of Mathematics, Changshu Institute of Technology, Jiangsu P. R. China}
\address[2]{Department of Mathematics, Huzhou University, Zhejiang P. R. China}
\address[3]{School of Mathematics, Tianjin University, Tianjin, P. R. China}

\begin{abstract}
In this paper, we classify the simple Harish-Chandra modules over the superconformal current algebra $\widehat{\frak g}$, which is the semi-direct sum of the $N=1$ superconformal algebra with the affine Lie superalgebra $\dot{\frak g} \otimes \mathcal{A}\oplus \mathbb CC_1$, where $\dot{\frak g}$ is a finite-dimensional simple Lie algebra, and $\mathcal{A}$ is the tensor product of the Laurent polynomial algebra and the Grassmann algebra. As an application, we can directly get the classification of the simple Harish-Chandra modules over the $N=1$ Heisenberg-Virasoro algebra.
\end{abstract}

\begin{keyword}
superconformal current algebra, the $N=1$ Ramond algebra, weight module, cuspidal module
\MSC[2000] 17B10, 17B20, 17B65
\end{keyword}
\date{ }
\end{frontmatter}

\section{Introduction}

It is well known that the Virasoro algebra is the simplest case of the superconformal algebras (corresponding to $N=0$)(\cite{KV}). The Virasoro algebra has been widely used in many fields of mathematics and physics (see \cite{IK, KA} and the references therein). There is more than one super extension of the Virasoro algebra since early 70's.
The simplest are the $N=1$ superconformal algebras including the $N=1$ Neveu-Schwarz and Ramond algebras. These infinite-dimensional Lie superalgebras are also called the super-Virasoro algebras, since they can be viewed as supersymmetry extensions of the Virasoro algebra. The complete classification of simple Harish-Chandra modules over the $N=1$ superconformal algebras was given in \cite{S}. Recently, by using a simpler approach, the classification of simple Harish-Chandra modules over the $N=1$ Neveu-Schwarz and Ramond algebras was given in \cite{CLL} and \cite{CL}, respectively.

As a supersymmetric extension of affine-Virasoro algebra, the superconformal current algebra $\widehat{\frak g}$ introduced in \cite{KT} is the semi-direct sum of the $N=1$ superconformal algebra with the affine Lie superalgebra $\dot{\frak g}\otimes \mathcal{A}\oplus\mathbb CC_1$, where $\dot{\frak g}$ is a finite-dimensional simple Lie algebra, and $\mathcal{A}$ is the tensor product of the Laurent polynomial algebra and the Grassmann algebra. This supersymmetric extension is implicit in recent work on superstring also admits a local field interpretation (\cite{W}). Here, we will consider the Ramond case. Recently many researches of the superconformal current algebra have appeared in mathematics and physics, see the references for details (\cite{CHL, G, GSWX, LPXZ, Ma, MOR}).

Based on the classification of simple jet modules introduced by Y. Billig in \cite{B} (also see \cite{E}), a complete classification of simple Harish-Chandra modules over Lie algebra
of vector fields on a torus was given in \cite{BF} by the $\mathcal{A}$-cover theory. Recently, with the $\mathcal{A}$-cover theory, all simple Harish-Chandra modules were classified over the Witt superalgebra (\cite{XL2}) (also see \cite{BFI}), the $N=1$ superconformal algebra (\cite{CLL, CL}) and the map (super)algebra related to the Virasoro
algebra (\cite{CLW}). In this paper, we use this method to classify simple Harish-Chandra modules over the superconformal current algebra $\widehat{\frak g}$ and get the following main result.

\noindent{\bf Main theorem} (Theorem \ref{main}) {\it
Let $M$ be a simple Harish-Chandra module over the superconformal current algebra $\widehat{\frak g}$.
Then $M$ is a highest weight module, a lowest weight module, or a simple quotient of a tensor module $\Gamma(\lambda, V, b)$ or $\Pi(\Gamma(\lambda, V, b))$
for some finite-dimensional simple $\dot{\frak g}$-module $V$ and $\lambda, b\in\bC$.
}

As an application, we can get the classification of simple Harish-Chandra modules over the $N = 1$ Heisenberg-Virasoro algebra (Theorem \ref{main1}), which is the universal central extension of the Green-Schwarz-Witten (GSW) superalgebra (\cite{AJR, PB}). The GSW superalgebra was introduced in the study of superstring theory with the name ``the complete spectrum-generating superalgebra'' (\cite{GSW}).

The paper is organized as follows. In Section 2, we give some definitions and preliminaries. In Section 3, we introduce the definitions and basic conclusions of the $\mathcal{A}$-cover. In Section 4, we study the simple cuspidal modules of $\widehat{\frak g}$. In Section 5, we classify the simple Harish-Chandra modules over $\widehat{\frak g}$ and present the results for the $N = 1$ Heisenberg-Virasoro algebra.

\section{Preliminaries}
Throughout this paper, we denote by $\mathbb{Z}, \mathbb{Z}_+, \mathbb{N}$ and $\bC$ the sets of all integers, nonnegative integers, positive integers and complex numbers, respectively. All algebras, modules and vector spaces are assumed to be over $\bC$. A super vector space $V=V_{\bar{0}}\oplus V_{\bar{1}}$ is a vector space endowed with a $\mathbb{Z}_2$-gradation. The parity of a homogeneous element $v\in V_{\bar{i}}$ is denoted by $|v|=\bar{i}\in \mathbb{Z}_2$. When we write $|v|$
for an element $v\in V$, we will always assume that $v$ is a homogeneous element.
\begin{defi}
The $N=1$ Ramond algebra $\frak s$ is a Lie superalgebra with basis $\{L_i, G_i, C\mid i\in \mathbb{Z}\}$ and brackets
\begin{align*}
&[L_i,L_j]=(j-i)L_{i+j}+\delta_{i+j,0}\frac{1}{12}(i^3-i)C,\\
&[L_i,G_j]=(j-\frac{i}{2})G_{i+j},\\
&[G_i,G_j]=-2L_{i+j}+\delta_{i+j,0}\frac{1}{12}(4i^2-1)C
\end{align*} for any $i, j\in\mathbb Z$.
\end{defi}
The even part of $\frak s$ is spanned by $\{L_i, C\mid i\in \mathbb{Z}\}$, and is isomorphic to the Virasoro algebra, which is the universal central extension of the Witt algebra
$\mathcal{W}$. The odd part of $\frak s$ is spanned by $\{G_i\mid i\in \mathbb{Z}\}$. Let ${\overline {\frak s}}$ be the quotient algebra $\frak s/\bC C$.

Let $\mathcal{A}=\bC[t^{\pm 1}]\otimes\Lambda(1)$ be the tensor product of the Laurent polynomial algebra in even variable $t$ and the Grassmann algebra in odd variable $\xi$.
$\mathcal{A}$ is an ${\overline {\frak s}}$-module with
\begin{align*}
&L_i\cdot f=t^{i+1}\frac{\partial}{\partial t}(f)+\frac{i}{2}t^{i}\xi\frac{\partial}{\partial\xi}(f),\\
&G_i\cdot f=t^{i+1}\xi\frac{\partial}{\partial t}(f)-t^{i}\frac{\partial}{\partial\xi}(f),
\end{align*}
where $i\in \bZ, f\in \mathcal{A}$. We have $[a, f]=a\cdot f$ for $a\in {\overline {\frak s}}$ and $f\in \mathcal{A}$.
On the other hand, the paper \cite{CLL} shows that $\overline {\frak s}$ has an $\mathcal{A}$-module structure
\begin{align*}
t^{i}L_n=L_{n+i},\ t^{i}G_n=G_{n+i},\ \xi L_n=\frac{1}{2}G_n,\ \xi G_n=0,\ \forall i, n \in \bZ.
\end{align*}
For a finite-dimensional simple Lie algebra $\dot{\frak g}$, we have $\dot{\frak g}=[\dot{\frak g}, \dot{\frak g}]$ and $\dot{\frak g}$ has a non-degenerated invariant bilinear form
$(\cdot~|~\cdot)$. In this case $\dot{\frak g}\otimes \mathcal{A}$ is a Lie superalgebra, which is called the super loop algebra (see \cite{KT}), with $$[x\otimes f, y\otimes g]=[x, y]\otimes fg$$ for all $x, y\in \dot{\frak g}, f,g\in \mathcal{A}$. Obviously, $\dot{\frak g}\otimes \mathcal{A}$ has a natural $\mathcal{A}$-module structure
\begin{align*}
&t^{i}(x\otimes t^n)=x\otimes t^{n+i},\ t^{i}(x\otimes t^n\xi)=x\otimes t^{n+i}\xi,\nonumber\\
&\xi(x\otimes t^n)=x\otimes t^{n}\xi,\ \ \xi(x\otimes t^n\xi)=0
\end{align*}
for $i, n\in\bZ$ and $x\in\dot{\frak g}$. Meanwhile, $\dot{\frak g}\otimes \mathcal{A}$ becomes an $\overline {\frak s}$-module by the usual actions of $\overline {\frak s}$ on $\mathcal{A}$.
So we can define a Lie superalgebra $\frak g$ associated to $\dot{\frak g}$ as $\frak g=\overline {\frak s}\ltimes(\dot{\frak g}\otimes \mathcal{A})$.
Since $[\frak g, \frak g]=\frak g$,  we can get the universal central extension $\widehat{\frak g}$ of $\frak g$ by direct calculation, which is just the superconformal current algebra in \cite{KT}.

\begin{defi}\cite{KT}
Let $\dot{\frak g}$ be a finite-dimensional simple Lie algebra with a non-degenerated invariant bilinear form
$(\cdot~|~\cdot)$, then the superconformal current algebra $\widehat{\frak g}=\frak s\ltimes (\dot{\frak g}\otimes \mathcal A\oplus\mathbb CC_1)$ with the following brackets:
\begin{align*}
&[L_i,L_j]=(j-i)L_{i+j}+\delta_{i+j,0}\frac{1}{12}(i^3-i)C, \\
&[L_i,G_j]=(j-\frac{i}{2})G_{i+j},\\
&[G_i,G_j]=-2L_{i+j}+\delta_{i+j,0}\frac{1}{12}(4i^2-1)C, \\
&[L_i,x\otimes t^j\xi]=(j+\frac{i}{2})x\otimes t^{i+j}\xi,\\
&[L_i,x\otimes t^j]=jx\otimes t^{i+j},\quad [G_i,x\otimes t^j\xi]=-x\otimes t^{i+j},\\
&[G_i,x\otimes t^j]=jx\otimes t^{i+j}\xi,\\
&[x\otimes t^i\xi, y\otimes t^j]=[x,y]\otimes t^{i+j}\xi,\\
&[x\otimes t^i, y\otimes t^j]=[x,y]\otimes t^{i+j}+i\delta_{i+j,0}(x,y)C_1,\\
&[x\otimes t^i\xi, y\otimes t^j\xi]=\delta_{i+j,0}(x,y)C_1,
\end{align*}
where $i,j\in\bZ$ and $x,y\in\dot{\frak g}$.
\end{defi}

It is clear that $\widehat{\frak g}$ has a $\mathbb{Z}$-grading by the eigenvalues of the adjoint action of $L_0$. Then
$$\widehat{\frak g}=\bigoplus_{n\in\mathbb{Z}}\widehat{\frak g}_n=\widehat{\frak g}_+\oplus \widehat{\frak g}_0\oplus \widehat{\frak g}_-,$$
where
$$\widehat{\frak g}_{\pm}=\bigoplus_{n\in\mathbb{N}}\widehat{\frak g}_{\pm n},\ \ \widehat{\frak g}_0=\dot{\frak g}\oplus \mathbb{C}C\oplus\mathbb{C}C_1\oplus\mathbb{C}L_0\oplus\mathbb{C}G_0.$$
Let $\dot{\fh}$ be the Cartan subalgebra of $\dot{\frak g}$. Then $\widehat{\fh}=\dot{\fh}\oplus \mathbb{C}C\oplus\mathbb{C}C_1\oplus\mathbb{C}L_0$ is the cartan subalgebra of $\widehat{\frak g}$. A highest weight module over $\widehat{\frak g}$ is characterized by its highest weight $\Lambda\in \widehat{\fh}^*$ and highest weight vector $v_0$ such that
$(\widehat{\frak g}_+\oplus\dot{\frak g}_+)v_0=0$ and $hv_0=\Lambda(h)v_0, \forall h\in \widehat{\fh}$.

Set $\wt{\frak g}={\frak g}\ltimes\mathcal{A}$. A $\wt{\frak g}$-module $V$ is called an $\mathcal{A}\frak g$-module if $\mathcal{A}$ acts associatively,
i.e.,
$$t^0 v=v, fgv=f(gv),\forall f,g\in \mathcal{A}, v\in V.$$
Let $\mathfrak{l}$ be any of the Lie superalgerba $\frak g, \widehat{\frak g}$ and $\wt{\frak g}$. An
$\mathfrak{l}$-module $V$ is called a weight module if the action of $L_0$ on $V$ is diagonalizable, i.e., $V=\bigoplus\limits_{\lambda\in \bC} V_{\lambda}$, where
$V_{\lambda}=\{v\in V\mid L_0 v=\lambda v\}$.  The support of a weight module $V$ is defined by $\mbox{supp}(V)=\{\lambda\in \bC\mid V_{\lambda}\ne 0\}$. A weight
$\mathfrak{l}$-module $V$ is called Harish-Chandra if $\dim V_{\lambda}<\infty,\forall \lambda\in \mbox{supp}(V)$, and is called cuspidal or uniformly bounded if there exists
some $N\in \bN$ such that $\dim V_{\lambda}\le N,\forall \lambda\in \mbox{supp}(V)$. Clearly, if $V$ is simple, then $\mbox{supp}(V)\subseteq\lambda+\bZ$ for some $\lambda\in\bC$.

Let $\sigma: L\rightarrow L'$ be any homomorphism of Lie superalgebras or associative superalgebras and $V$ be any $L'$-module. Then $V$ becomes an $L$-module, denoted
by $V^\sigma$, with $x\cdot v=\sigma(x)v$ for $x\in L, v\in V$. Denote $T$ be the automorphism of $L$ defined by $T(x)=(-1)^{|x|}x, \forall x\in L$. For any $L$-module $V$,
$\Pi(V)$ is the module defined by a parity-change of $V$ with $\Pi(V)_{\bar{0}}=V_{\bar{1}}, \Pi(V)_{\bar{1}}=V_{\bar{0}}$ and the actions of $L$ on $\Pi(V)$ is the same as that of $V$.

A module $M$ over an associative superalgebra $B$ is called strictly simple if it is a simple module over the associative algebra $B$ (forgetting the $\bZ_2$-gradation).

We need the following results on tensor modules over tensor superalgebras.
\begin{lemm}\cite[Lemma 2.1, 2.2]{XL2}\label{lm:2.1}
Let $B, B'$ be unital associative superalgebras, and $M, M'$ be $B, B'$ modules, respectively.

1. $M\otimes M'\cong \Pi(M)\otimes \Pi(M'^T)$ as $B\otimes B'$-modules.

2. If $B'$ has a countable basis and $M'$ is strictly simple, then

(1) any $B\otimes B'$-submodule of $M\otimes M'$ is of the form $N\otimes M'$ for some $B$-submodule $N$ of $M$;

(2) any simple quotient of the $B\otimes B'$-module $M\otimes M'$ is isomorphic to some $\overline{M}\otimes M'$ for some simple quotient $\overline{M}$ of $M$;

(3) $M\otimes M'$ is a simple $B\otimes B'$-module if and only if $M$ is a simple $B$-module;

(4) if $V$ is a simple $B\otimes B'$-module containing a strictly simple $B'=\bC\otimes B'$-module $M'$, then $V\cong M\otimes M'$ for some simple $B$-module
$M$.
\end{lemm}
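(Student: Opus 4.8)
The plan is to dispatch statement~1 by an explicit relabeling map and statement~2 by a density argument, the single technical engine being a super-version of Schur's lemma. For statement~1 I would take the map that is the identity on the underlying vector space and only relabels parities, sending $m\otimes m'$ to the same element regarded in $\Pi(M)\otimes\Pi(M'^T)$; applying $\Pi$ to both tensor factors leaves the total parity $|m|+|m'|$ unchanged, so this relabeling is an even linear bijection, and it remains to check $B\otimes B'$-equivariance. Recall the action on $M\otimes M'$ carries the Koszul sign $(a\otimes b)(m\otimes m')=(-1)^{|b||m|}(am)\otimes(bm')$. On $\Pi(M)\otimes\Pi(M'^T)$ the parity shift on the first factor raises $|m|$ by $\bar 1$ and produces an extra $(-1)^{|b|}$, while the twist $T$ on $M'$ makes $b$ act as $(-1)^{|b|}b$ and produces a second $(-1)^{|b|}$; the two cancel, both actions agree, and statement~1 reduces to this sign bookkeeping.

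For statement~2 the crucial input is $\mathrm{End}_{B'}(M')=\bC$. Since $M'$ is strictly simple, i.e.\ simple over the ungraded algebra $B'$, and $B'$ has a countable basis over the uncountable algebraically closed field $\bC$, Dixmier's form of Schur's lemma forces every $B'$-endomorphism of $M'$ to be scalar. The Jacobson density theorem then gives that $B'$ acts densely: for linearly independent $m'_1,\dots,m'_k\in M'$ and arbitrary $w'_1,\dots,w'_k\in M'$ there is $b'\in B'$ with $b'm'_i=w'_i$. This separation of coordinates in the $M'$-factor drives all four items.

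To prove item~(1), fix a nonzero $m'_0\in M'$ and, for a submodule $W$, set $N=\{m\in M:m\otimes m'_0\in W\}$. Given $w=\sum_i m_i\otimes m'_i\in W$ with the $m'_i$ linearly independent, density produces $b'\in B'$ sending $m'_j\mapsto m'_0$ and killing the remaining $m'_i$, so that $(1\otimes b')w=\pm\,m_j\otimes m'_0\in W$ (splitting $b'$ into homogeneous components if needed to control the sign); hence each $m_j\in N$ and $W\subseteq N\otimes M'$. Conversely, simplicity of $M'$ gives $(1\otimes B')(m\otimes m'_0)=m\otimes M'$ for $m\in N$, so $N\otimes M'\subseteq W$, and $N$ is readily checked to be a $B$-submodule. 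Thus $N\mapsto N\otimes M'$ is an inclusion-preserving bijection between $B$-submodules of $M$ and $B\otimes B'$-submodules of $M\otimes M'$. Item~(3) is then immediate, and for item~(2) a maximal $W=N\otimes M'$ corresponds to a maximal $N$ with $(M\otimes M')/(N\otimes M')\cong(M/N)\otimes M'$, exhibiting the simple quotient $\overline M=M/N$.

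Finally, for item~(4) I would realize $M$ as the multiplicity space $M:=\mathrm{Hom}_{B'}(M',V)$, with the $B$-action induced from $B\otimes 1\subseteq B\otimes B'$, and consider the evaluation map $\mathrm{ev}\colon M\otimes M'\to V$, $f\otimes m'\mapsto f(m')$. The inclusion $M'\hookrightarrow V$ lies in $M$, so $\mathrm{ev}\ne 0$; a sign-careful check shows it is a $B\otimes B'$-homomorphism, hence surjective by simplicity of $V$, and its kernel is then of the form $N\otimes M'$ by~(1)--(2), giving $V\cong(M/N)\otimes M'$ with $M/N$ simple. I expect the main obstacle to be twofold: correctly deploying Dixmier's lemma, which is exactly where the countable-basis hypothesis is indispensable (over $\bC$ a countable-dimensional division algebra must be $\bC$), and keeping the Koszul signs consistent throughout the density manipulations and when verifying that both the $B$-action on the multiplicity space and the map $\mathrm{ev}$ are genuine graded homomorphisms.
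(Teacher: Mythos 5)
The paper does not actually prove this lemma; it is quoted verbatim from \cite{XL2}, and your argument is essentially the proof given in that source: an explicit parity-relabelling isomorphism with Koszul-sign cancellation for part 1, and Dixmier's lemma (where the countable basis over the uncountable field $\bC$ enters) plus Jacobson density for the four items of part 2, with (4) handled via the multiplicity space $\mathrm{Hom}_{B'}(M',V)$ and the evaluation map. The one step you flag but do not fully execute --- controlling signs when applying $1\otimes b'$ --- deserves a word more: one should first write $w=\sum_i m_i\otimes m'_i$ with the $m'_i$ homogeneous and linearly independent (and, since submodules are $\bZ_2$-graded, the $m_i$ homogeneous too), after which exactly one homogeneous component of the density element $b'$ realizes the desired projection onto $m_j\otimes m'_0$ while the other component annihilates all the $m'_i$, so the Koszul sign is a harmless $\pm1$ and the argument closes as you describe.
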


\section{$\mathcal{A}$-cover}
As we all know, the key to the classification of the Harish-Chandra module of $\widehat{\frak g}$ is the classification of its cuspidal module.
Let $V$ be a simple Harish-Chandra module over $\widehat{\frak g}$. By Schur's Lemma, we may assume that the central elements $C$ and $C_1$ act on $V$ by scalars $c$ and $c_1$ respectively.
\begin{lemm}\label{lm:3.1}
Suppose that $V=\bigoplus_{\lambda\in \mathbb{C}}V_\lambda$ is a simple cuspidal module over $\widehat{\frak g}$. Then the actions of central elements $C$ and $C_1$
on $V$ are trival.
\end{lemm}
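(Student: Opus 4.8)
The plan is to use that, by Schur's Lemma (as noted just before the statement), $C$ and $C_1$ already act on $V$ by scalars $c$ and $c_1$, and to prove that each scalar vanishes. The strategy for each is the same: if the scalar were nonzero, I would exhibit an ``oscillator'' subalgebra of $\widehat{\frak g}$ whose modules are forced to have $L_0$-weight multiplicities growing without bound, contradicting the uniform bound $\dim V_\lambda\le N$ that cuspidality provides. Concretely, I would restrict $V$ to a carefully chosen subalgebra for $C_1$ and for $C$, and read off the growth from the representation theory of that subalgebra. Since the form $(\cdot\,|\,\cdot)$ is nondegenerate and symmetric, one can fix once and for all an $x\in\dot{\frak g}$ with $(x,x)\ne 0$ (otherwise polarization forces the form to vanish identically).

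For $C_1$ I would exploit the odd generators. Set $a_m=x\otimes t^m\xi$; since these are odd, the relation $[x\otimes t^i\xi,y\otimes t^j\xi]=\delta_{i+j,0}(x,y)C_1$ reads, on $V$, as the Clifford relations $\{a_m,a_n\}=\delta_{m+n,0}\,\mu\,\mathrm{Id}$ with $\mu=(x,x)c_1$. Suppose $\mu\ne 0$. For each $k\in\bN$ the elements $a_{\pm 1},\dots,a_{\pm k}$ span a copy of the complex Clifford algebra $\mathrm{Cl}(2k)\cong M_{2^{k}}(\bC)$, which is simple; hence $V$ restricted to it is a nonzero direct sum of copies of its unique simple module $S\cong\Lambda\big(\mathrm{span}(a_{-1},\dots,a_{-k})\big)$ of dimension $2^k$. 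As each $a_m$ is $L_0$-homogeneous of weight $m$, the vacuum space $\bigcap_{i=1}^{k}\Ker(a_i)$ is weight graded, so one may pick a weight vacuum $\Omega$ of weight $w_0$; the vectors $a_{-i_1}\cdots a_{-i_p}\Omega$ with $1\le i_1<\cdots<i_p\le k$ form a weight basis of the corresponding copy of $S$, the vector of index set $I$ having weight $w_0-\sum_{i\in I} i$. Thus the multiplicity of the weight $w_0-s$ inside one copy of $S$ is the number of subsets of $\{1,\dots,k\}$ summing to $s$; since there are $2^k$ subsets and at most $1+\tfrac{1}{2}k(k+1)$ possible sums, some weight has multiplicity at least $2^{k}/\big(1+\tfrac12 k(k+1)\big)\to\infty$. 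This forces $\dim V_{w_0-s}\to\infty$ as $k\to\infty$, contradicting cuspidality, so $\mu=0$ and therefore $c_1=0$. (The even Heisenberg subalgebra spanned by $\{x\otimes t^m\}$, for which $[x\otimes t^m,x\otimes t^n]=m\delta_{m+n,0}(x,x)C_1$ because $[x,x]=0$, gives the same conclusion via the classical Fock-space growth, and I would use whichever is more convenient.)

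For $C$ I would instead restrict $V$ to the $N=1$ Ramond subalgebra $\frak s=\mathrm{span}\{L_i,G_i,C\}$. This restriction is again a weight module with $\dim V_\lambda\le N$, i.e.\ a uniformly bounded $\frak s$-module on which $C$ acts by $c$, and by the known vanishing of the central charge on such modules over the Ramond algebra (\cite{CL}; see also \cite{S}) we get $c=0$. I expect this step to be the main obstacle of the lemma, in the following sense: unlike the affine part, the super-Virasoro generators do not close into a Clifford or Heisenberg algebra—the relations $\{G_m,G_{-m}\}=-2L_0+\tfrac1{12}(4m^2-1)C$ and $[L_m,L_{-m}]=-2mL_0+\tfrac1{12}(m^3-m)C$ always reintroduce $L_0$, so there is no direct oscillator-growth argument and one must invoke the genuinely deeper Virasoro-type input. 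The only remaining work is routine: verifying that the restriction of a cuspidal $\widehat{\frak g}$-module to $\frak s$ is still a weight module with the same uniform bound $N$, so that the cited result applies verbatim.
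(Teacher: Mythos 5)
Your proof is correct, but it takes a partly different route from the paper's. The paper disposes of both central elements in one sentence: it observes that $\widehat{\frak g}$ contains the Virasoro subalgebra $\mathrm{span}\{L_i,C\}$ and the untwisted affine subalgebra $\mathrm{span}\{x\otimes t^i, C_1\}$ and cites Lemma 3.2 of \cite{LPX} on cuspidal modules over affine-Virasoro algebras to get $c=c_1=0$ simultaneously. Your treatment of $C$ is the same move in substance --- restrict to a (super-)Virasoro subalgebra and invoke the known vanishing of the central charge on uniformly bounded modules; note only that the Ramond reference is \cite{CLL} rather than \cite{CL}, and that restricting further to the even part $\mathrm{span}\{L_i,C\}$ and quoting the classical Virasoro fact would already suffice. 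Your treatment of $C_1$, by contrast, is self-contained and genuinely different: you use the odd generators $a_m=x\otimes t^m\xi$, whose supercommutators close into a finite Clifford algebra $\mathrm{Cl}(2k)\cong M_{2^k}(\bC)$ when $\mu=(x,x)c_1\ne 0$, and the pigeonhole bound $2^k/\bigl(1+\tfrac12k(k+1)\bigr)\to\infty$ on the weights of the monomials $a_{-i_1}\cdots a_{-i_p}\Omega$ correctly contradicts the uniform bound on $\dim V_\lambda$. The supporting details check out: an anisotropic $x$ exists by polarization since the form is nondegenerate and symmetric; the vacuum space is weight-graded because each $a_i$ is $L_0$-homogeneous of degree $i$; and the $2^k$ monomials on a weight vacuum are linearly independent because the cyclic submodule they span is a nonzero module over the simple algebra $M_{2^k}(\bC)$, hence of dimension exactly $2^k$. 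What your argument buys is independence from \cite{LPX} for the affine central element, via a mechanism (the Clifford relations of the odd part) that has no analogue in the purely even affine-Virasoro setting; what the paper's citation buys is brevity and a single uniform treatment of $C$ and $C_1$.
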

\begin{proof}
Since $\widehat{\frak g}$ has a Virasoro subalgebra with basis $\{L_i, C\mid i\in \mathbb{Z}\}$, and an untwisted affine Lie subalgebra with basis $\{x\otimes t^i, C_1\mid x\in\dot{\frak g}, i\in \mathbb{Z}\}$. According to Lemma 3.2 of \cite{LPX}, we have $c=c_1=0$.
\end{proof}

From the above lemma, we know that the category of simple cuspidal $\widehat{\frak g}$-modules is naturally equivalent to the category of simple cuspidal $\frak g$-modules.
In order to study cuspidal modules of $\frak g$, we introduce the concept of $\mathcal{A}$-cover. For simplicity, we denote $\dot{\frak g}\otimes \mathcal{A}$ by $I$.

\begin{prop}\label{prop:3.2}
Let $M$ be a $\frak g$-module. Then, the tensor product $I\otimes M$ is an $\mathcal{A}\frak g$-module by the following actions of $\frak g$ and $\mathcal{A}$
\begin{eqnarray}
a(w\otimes v)&=&[a,w]\otimes v+(-1)^{|a||w|}w\otimes a(v),\label{eq3.1}\\
f(w\otimes v)&=&f(w)\otimes v, \label{eq3.2}
\end{eqnarray}
for $a\in\frak g, f\in \mathcal{A}, w\in I$ and $v\in M$. Furthermore, if $M$ is a weight module, then so is $I\otimes M$.
\end{prop}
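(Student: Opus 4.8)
The plan is to check the $\mathcal{A}$-action axioms and the $\frak g$-action axioms separately and then verify their compatibility, reducing every computation to structural facts about $I$ itself. Recall that $I=\dot{\frak g}\otimes\mathcal{A}$ is an ideal of $\frak g$, so the adjoint action $a\cdot w=[a,w]$ makes $I$ a $\frak g$-module, and recall that $I$ already carries the associative $\mathcal{A}$-module structure recorded above. With these two structures in hand, \eqref{eq3.1} is nothing but the usual action on the tensor product of the two $\frak g$-modules $I$ and $M$, while \eqref{eq3.2} lets $\mathcal{A}$ act only on the $I$-factor.

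First I would confirm that \eqref{eq3.1} is a genuine $\frak g$-action, i.e. that
$$a\bigl(b(w\otimes v)\bigr)-(-1)^{|a||b|}b\bigl(a(w\otimes v)\bigr)=[a,b](w\otimes v)$$
for all homogeneous $a,b\in\frak g$. This is the standard fact that the tensor product of two modules over a Lie superalgebra is again a module; the only thing to watch is the Koszul sign $(-1)^{|a||w|}$ attached to the second summand, and expanding both sides and collecting terms reduces the claim to the super-Jacobi identity together with the module axioms for $I$ and $M$. Next I would verify that \eqref{eq3.2} is an associative $\mathcal{A}$-action: since $M$ is untouched, $t^0(w\otimes v)=(t^0 w)\otimes v=w\otimes v$ and $(fg)(w\otimes v)=\bigl((fg)w\bigr)\otimes v=\bigl(f(gw)\bigr)\otimes v=f\bigl(g(w\otimes v)\bigr)$, which is immediate from the associativity of the $\mathcal{A}$-action on $I$.

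The crux is the compatibility welding the two actions into a single $\widetilde{\frak g}=\frak g\ltimes\mathcal{A}$-action, namely
$$a\bigl(f(w\otimes v)\bigr)-(-1)^{|a||f|}f\bigl(a(w\otimes v)\bigr)=[a,f](w\otimes v)$$
for $a\in\frak g$ and $f\in\mathcal{A}$. I would expand the left-hand side using \eqref{eq3.1} and \eqref{eq3.2}; the two terms containing $a(v)$ carry identical Koszul signs and cancel, so the identity collapses to the purely $I$-theoretic statement
$$[a,f(w)]-(-1)^{|a||f|}f([a,w])=(a\cdot f)(w),$$
which is exactly the assertion that $I$ is itself an $\mathcal{A}\frak g$-module. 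This last identity I would check on generators: for $a\in\overline{\frak s}$ it is the Leibniz rule for the derivation action of $\overline{\frak s}$ on $\mathcal{A}$, a short computation pairing $L_i$ and $G_i$ against $t^j$ and $t^j\xi$, while for $a\in I$ both sides vanish because $I$ acts trivially on $\mathcal{A}$ and the loop-bracket passes through the $\mathcal{A}$-multiplication. Here it is essential that $\frak g$, unlike $\widehat{\frak g}$, carries no central elements, so that no $C_1$ term obstructs the cancellation. I expect this step to be the main obstacle, since it demands careful bookkeeping of the Koszul signs for the odd generators $G_i$ and the odd variable $\xi$.

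Finally, for the weight statement, I would use that $I$ is $\bZ$-graded by $L_0$-eigenvalues, since $[L_0,x\otimes t^j]=j\,x\otimes t^j$ and $[L_0,x\otimes t^j\xi]=j\,x\otimes t^j\xi$. If $w\in I$ and $v\in M$ are $L_0$-eigenvectors with eigenvalues $\alpha$ and $\beta$, then \eqref{eq3.1} gives $L_0(w\otimes v)=[L_0,w]\otimes v+w\otimes L_0 v=(\alpha+\beta)(w\otimes v)$, so $I\otimes M$ is spanned by $L_0$-eigenvectors and is therefore a weight module.
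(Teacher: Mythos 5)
Your proposal is correct and follows essentially the same route as the paper: both reduce the $\widetilde{\frak g}$-compatibility $a(f(w\otimes v))-(-1)^{|a||f|}f(a(w\otimes v))=[a,f](w\otimes v)$ to the cancellation of the $f(w)\otimes a(v)$ terms and the Leibniz identity $[a,f(w)]=(a\cdot f)(w)+(-1)^{|a||f|}f([a,w])$ in $I$, and both treat the weight claim via $L_0$-eigenvectors. The only difference is cosmetic: you spell out the generator-by-generator verification of the Leibniz identity, which the paper absorbs into a single displayed computation.
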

\begin{proof}
Note that $I$ is the ideal of $\frak g$. So $I\otimes M$ is a $\frak g$-module with the adjoint $\frak g$-action and $\mathcal{A}$-module acting as (\ref{eq3.2}).
Hence $I\otimes M$ is a $\wt{\frak g}
$-module because
\begin{eqnarray*}
&&a(f(w\otimes v))-(-1)^{|a||f|}f(a(w\otimes v))\\
&=&a(f(w)\otimes v)-(-1)^{|a||f|}f([a,w]\otimes v+(-1)^{|a||w|}w\otimes a(v))\\
&=&[a,f(w)]\otimes v+(-1)^{|a||f(w)|}f(w)\otimes a(v)-(-1)^{|a||f|}f([a,w])\otimes v\\
&&-(-1)^{|a|(|f|+|w|)}f(w)\otimes a(v)\\
&=&a(f)(w)\otimes v+(-1)^{|a||f|}f([a,w])\otimes v-(-1)^{|a||f|}f([a,w])\otimes v\\
&=&[a,f](w\otimes v)
\end{eqnarray*}
for homogeneous $a\in\frak g, f\in \mathcal{A}, w\in I$ and $v\in M$. Obviously, the action of $\mathcal{A}$ on $I\otimes M$ is associative. So $I\otimes M$ is an $\mathcal{A}\frak g$-module.

Suppose $M$ is a weight module, then $I\otimes M$ is spanned by $w\otimes v$ with $w$ and $v$ being eigenvectors with respect to the action of $L_0$. By (\ref{eq3.1}), we get that $w\otimes v$ is also an eigenvector for the action of $L_0$. So $I\otimes M$ is a weight module.
\end{proof}

Let $K(M)=\{\sum\limits_iw_i\otimes v_i\in I\otimes M\mid\sum\limits_i(fw_i)v_i=0,\ \forall f\in \mathcal{A}\}$. Then it is easy to see that $K(M)$ is an $\mathcal{A}\frak g$-submodule of $I\otimes M$. Hence we have the $\mathcal{A}\frak g$-module $\widehat{M}=(I\otimes M)/K(M)$. As in \cite{BF}, we call $\widehat{M}$ the $\mathcal{A}$-cover of $M$ if $IM=M$.

Clearly, the linear map
\begin{equation*}\begin{split}
\pi: \ \ \widehat{M}  &\to\ \ \ I M,\\
         w\otimes v+K(M)\ & \mapsto\ \  wv,\quad \forall\ w\in I, v\in M
\end{split}
\end{equation*}
is a $\frak g$-module epimorphism.

Recall that in \cite{BF}, the authors show that every cuspidal $\mathcal{W}$-module is annihilated by the operators $\Omega_{k,p}^{(m)}$ for $m$ large enough.
\begin{lemm}\cite[Corollary 3.7]{BF}\label{lm:3.2}
For every $l\in\bN$ there exists $m\in\bN$ such that for all $k, p\in\bZ$ the differentiators $\Omega_{k, p}^{(m)}=\sum\limits_{i=0}^m(-1)^i\binom{m}{i}L_{k-i}L_{p+i}$ annihilate
every cuspidal $\mathcal{W}$-module with a composition series of length $l$.
\end{lemm}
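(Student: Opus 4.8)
The plan is to read $\Omega_{k,p}^{(m)}$ as a finite-difference operator and reduce the statement to a polynomiality assertion. Fix $k,p$, set $n=k+p$, and define the operator-valued function $F(i)=L_{k-i}L_{p+i}$. Since $[L_0,L_j]=jL_j$, for every weight $\mu$ the map $F(i)$ sends the \emph{fixed} space $V_\mu$ into the \emph{fixed} space $V_{\mu+n}$, independently of $i$; for a cuspidal $V$ both spaces have dimension $\le N$, so $F$ is a function from $\bZ$ into the finite-dimensional space $\mathrm{Hom}(V_\mu,V_{\mu+n})$. A direct computation shows $\Omega_{k,p}^{(m)}=(-1)^m(\Delta^m F)(0)$, where $\Delta$ is the forward difference $(\Delta f)(i)=f(i+1)-f(i)$. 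Hence it suffices to prove that $F$ is a polynomial in $i$ of degree bounded by a function of $l$ alone (uniformly in $k,p$ and $\mu$): once $m$ exceeds that degree one has $\Delta^m F=0$, i.e. $\Omega_{k,p}^{(m)}$ annihilates $V$.

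For the base case $l=1$ the module $V$ is simple cuspidal, hence of the intermediate series (Mathieu's classification of simple cuspidal $\mathcal W$-modules), on which $L_m v_t=(t+a+mb)v_{t+m}$. Substituting gives $F(i)v_t=\big(t+a+(p+i)b\big)\big((t+p+i)+a+(k-i)b\big)v_{t+n}$, which is manifestly quadratic in $i$. Thus $F$ has degree $\le 2$ uniformly, so any $m\ge 3$ works. I would record this as: there is a uniform $m_0$ (namely $m_0=3$) such that $\Omega_{k,p}^{(m_0)}$ annihilates every simple cuspidal module, for all $k,p$.

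The passage to general $l$ is by induction along a composition series $0=V^0\subset V^1\subset\cdots\subset V^l=V$ with simple cuspidal quotients. Because $\Omega_{k,p}^{(m_0)}$ is a fixed element of $U(\mathcal W)$ that vanishes on every simple cuspidal subquotient, it maps each $V^j$ into $V^{j-1}$; equivalently, on the associated graded module $F(i)$ is block-diagonal with quadratic blocks, so $\Delta^{m_0}F$ is strictly filtration-lowering. The goal is to upgrade this to global polynomiality of $F$, i.e. to bound the degree in $i$ of the off-diagonal (extension) blocks of $F(i)$. The mechanism is to use $[L_a,L_b]=(b-a)L_{a+b}$ to rewrite $F(i+1)-F(i)$ through commutators with a single generator and lower products, then feed in the inductive polynomiality on $V^1$ and on $V/V^1$ together with the uniform bound $\dim V_\mu\le N$; the bound forces the resulting operator-sequences to be polynomial, with degree increasing by at most a fixed constant per filtration step. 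This yields $\deg_i F\le c\,l$ for an absolute constant $c$, and taking $m=cl+1$ proves the lemma.

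The hard part is exactly this last step. Note that ``polynomial on each graded piece'' does \emph{not} by itself imply ``polynomial on $V$'': the off-diagonal matrix entries of $F(i)$ could a priori grow like $\lambda^i$, so the extension data must be controlled directly rather than through the associated graded. This is where the full strength of the uniform bound $\dim V_\mu\le N$ is needed, converting the Lie relations into finite linear recursions in $i$ whose bounded solutions are necessarily polynomial; tracking the order of these recursions through the composition series is the technical core behind \cite[Corollary 3.7]{BF}.
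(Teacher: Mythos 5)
First, a point of comparison: the paper does not prove this statement at all --- Lemma \ref{lm:3.2} is imported verbatim as \cite[Corollary 3.7]{BF}, so there is no internal argument to measure you against. Judged as a standalone proof, your reduction is the right first move and is carried out correctly: with $F(i)=L_{k-i}L_{p+i}$ viewed as a map $V_\mu\to V_{\mu+k+p}$ one indeed has $\Omega^{(m)}_{k,p}=(-1)^m(\Delta^mF)(0)$, and the base case $l=1$ is complete (granting Mathieu's classification of simple cuspidal $\mathcal{W}$-modules): on an intermediate-series module $F(i)$ is a product of two affine functions of $i$, hence quadratic, so $m=3$ annihilates every simple cuspidal module, and the trivial module is immediate.

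The inductive step, however, is not proved; it is described, and the description contains two genuine defects. (i) You never establish any control on $F(i)$ as $i$ varies. The finite-dimensionality of $\mathrm{Hom}(V_\mu,V_{\mu+n})$ says nothing about the size of the matrix entries of $F(i)$, so there is no ``boundedness'' available to feed into a recursion; and the off-diagonal blocks of $F(i)$ relative to the filtration $V^{j-1}\subset V^j$ --- precisely the extension data you need to control --- are exactly the entries for which no bound is supplied. (ii) Even granting a linear recursion with bounded solutions, the assertion that ``bounded solutions are necessarily polynomial'' is false: $f(i)=\lambda^i$ with $|\lambda|=1$ is a bounded solution of $f(i+1)-\lambda f(i)=0$, and in general one only gets quasi-polynomials twisted by unit characters. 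Ruling out the nontrivial characters is a real step, not a formality. Since the entire content of \cite[Corollary 3.7]{BF} beyond the simple case is exactly this passage from length $1$ to length $l$, what you have written proves the lemma only for $l=1$; to close it you must either supply the actual mechanism (the bracket identities among differentiators and the induction on composition length carried out in \cite[Section 3]{BF}) or, as the paper does, cite the result.
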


Let $M$ be a cuspidal $\frak g$-module. Obviously, $M$ is a cuspidal $\mathcal{W}$-module and hence there exists $m\in\bN$ such that $\Omega_{k,p}^{(m)}M=0, \forall k,p\in\bZ$.
Therefore, $[\Omega_{k,p}^{(m)},x\otimes t^j]M=0, \forall k,p,j\in\bZ, x\in\dot{\frak g}$. In Lemma 4.4 of {\cite{CLW}}, the authors show that
$$\sum\limits_{i=0}^{m+2}(-1)^i\binom{m+2}{i}(x\otimes t^{k+j+1-i})L_{p-1+i}=0$$
on $M$. Similarly, from $[\Omega_{k,p}^{(m)},x\otimes t^j\xi]M=0$ and $[L_i,x\otimes t^j\xi]=(j+\frac{i}{2})x\otimes t^{i+j}\xi, \forall i,j,k,p\in\bZ, x\in\dot{\frak g}$, we have
\begin{align*}
0=&[\Omega_{k,p-1}^{(m)},x\otimes t^{j+1}\xi]-2[\Omega_{k,p}^{(m)},x\otimes t^j\xi]+[\Omega_{k,p+1}^{(m)},x\otimes t^{j-1}\xi]-[\Omega_{k+1,p-1}^{(m)},x\otimes t^j\xi]\\
&+2[\Omega_{k+1,p}^{(m)},x\otimes t^{j-1}\xi]-[\Omega_{k+1,p+1}^{(m)},x\otimes t^{j-2}\xi]\\
=&\frac{1}{2}\sum\limits_{i=0}^m(-1)^i\binom{m}{i}\big(x\otimes t^{k+j+1-i}\xi L_{p-1+i}-2x\otimes t^{k+j-i}\xi L_{p+i}+x\otimes t^{k-i-1+j}\xi L_{p+1+i})\\
=&\frac{1}{2}\sum\limits_{i=0}^{m+1}(-1)^{i-1}\binom{m+2}{i+1}x\otimes t^{k+j-i}\xi L_{p+i}+x\otimes t^{k+j+1}\xi L_{p-1}\\
=&\frac{1}{2}\sum\limits_{i=0}^{m+2}(-1)^i\binom{m+2}{i}x\otimes t^{k+j+1-i}\xi L_{p-1+i}
\end{align*}
on $M$. Thus, we have the following lemma.
\begin{lemm}\label{lm:3.3}
Let $M$ be a cuspidal $\frak g$-module. Then there exists $m\in\bN$ such that for all $k, p\in\bZ$ and $x\in\dot{\frak g}$, the operators
$\sum\limits_{i=0}^{m}(-1)^i\binom{m}{i}yL_{p+i}$ annihilate $M$, where $y\in \{x\otimes t^{k-i}, x\otimes t^{k-i}\xi \}.$
\end{lemm}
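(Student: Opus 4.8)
The plan is to derive this lemma from the Witt-algebra differentiator estimate (Lemma \ref{lm:3.2}) and to transport the resulting annihilation from the Virasoro part of $\frak g$ onto the current generators $x\otimes t^{j}$ and $x\otimes t^{j}\xi$ by taking commutators. First I would restrict $M$ to the Witt subalgebra $\mathcal{W}=\langle L_{i}\mid i\in\bZ\rangle\subseteq\frak g$ (note that $\frak g$ carries no Virasoro central charge, so $M$ is genuinely a $\mathcal{W}$-module). Since $M$ is cuspidal it is a uniformly bounded $\mathcal{W}$-module, hence of finite length $l$, and Lemma \ref{lm:3.2} then furnishes an $m\in\bN$ with $\Omega_{k,p}^{(m)}M=0$ for all $k,p\in\bZ$. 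The mechanism of the whole proof is the elementary observation that if an even operator $\Omega$ kills $M$ and $y$ preserves $M$, then $[\Omega,y]M=\Omega(yM)-(-1)^{|\Omega||y|}y(\Omega M)=0$; applying this with $y=x\otimes t^{j}$ and $y=x\otimes t^{j}\xi$ shows that each commutator $[\Omega_{k,p}^{(m)},y]$ already annihilates $M$, and the task reduces to expanding these commutators into binomial sums.

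For the even generators this is the model computation recorded in \cite[Lemma 4.4]{CLW} and reproduced in the excerpt: since $[L_{i},x\otimes t^{j}]=j\,(x\otimes t^{i+j})$ carries a coefficient independent of the summation index $i$, a single second difference in $p$ (the $1,-2,1$ pattern) together with repeated use of Pascal's identity $\binom{m}{i}+2\binom{m}{i-1}+\binom{m}{i-2}=\binom{m+2}{i}$ collapses everything into the one clean differentiator $\sum_{i=0}^{m+2}(-1)^{i}\binom{m+2}{i}(x\otimes t^{k+j+1-i})L_{p-1+i}$ acting as zero on $M$. Relabelling $k+j+1\mapsto k$, $p-1\mapsto p$ and $m+2\mapsto m$ yields exactly the asserted operator for $y=x\otimes t^{k-i}$.

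The odd generators are where the real difficulty lies, and I expect this to be the main obstacle. Now $[L_{i},x\otimes t^{j}\xi]=(j+\tfrac{i}{2})\,(x\otimes t^{i+j}\xi)$ carries a coefficient that depends on $i$ through the term $\tfrac{i}{2}$, so a second difference in $p$ no longer suffices: it removes the $j$-part but leaves a residual contribution linear in $i$. My plan is to kill that residual by superposing, on top of the second difference in $p$, a first difference in the $k$-index, with the exponents of $t$ adjusted so that all six resulting commutators act on the same monomial; this is precisely the six-term combination $[\Omega_{k,p-1}^{(m)},x\otimes t^{j+1}\xi]-2[\Omega_{k,p}^{(m)},x\otimes t^{j}\xi]+[\Omega_{k,p+1}^{(m)},x\otimes t^{j-1}\xi]-[\Omega_{k+1,p-1}^{(m)},x\otimes t^{j}\xi]+2[\Omega_{k+1,p}^{(m)},x\otimes t^{j-1}\xi]-[\Omega_{k+1,p+1}^{(m)},x\otimes t^{j-2}\xi]$ displayed before the statement. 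The design principle is that the linear-in-$i$ term is annihilated by this mixed second-order difference operator, so that after the cancellation and another pass of Pascal's identity the whole expression reduces to $\tfrac12\sum_{i=0}^{m+2}(-1)^{i}\binom{m+2}{i}(x\otimes t^{k+j+1-i}\xi)L_{p-1+i}$, which must therefore vanish on $M$.

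Finally I would simply align the two cases. After the common relabelling $m+2\mapsto m$, both computations take the form $\sum_{i=0}^{m}(-1)^{i}\binom{m}{i}\,y\,L_{p+i}=0$ on $M$ with $y=x\otimes t^{k-i}$ and $y=x\otimes t^{k-i}\xi$ respectively, which is exactly the claim. The only genuinely delicate bookkeeping is tracking the $t$-exponent and index shifts in the odd case so that the six commutators really do superpose onto a single binomial sum; the reduction from Lemma \ref{lm:3.2} and the even-generator computation are routine by comparison.
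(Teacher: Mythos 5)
Your proposal is correct and follows the same route as the paper: invoke Lemma \ref{lm:3.2} for the cuspidal $\mathcal{W}$-module underlying $M$, observe that $[\Omega_{k,p}^{(m)},y]M=0$ for $y=x\otimes t^{j}$ and $y=x\otimes t^{j}\xi$, and expand via \cite[Lemma 4.4]{CLW} in the even case and the same six-term mixed difference in the odd case. The only difference is cosmetic: you spell out the finite-length hypothesis needed to apply Lemma \ref{lm:3.2}, which the paper leaves implicit.
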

\begin{prop}\label{prop:3.5}
Let $M$ be a cuspidal module for the Lie superalgebra $\frak g=\overline {\frak s}\ltimes(\dot{\frak g}\otimes \mathcal{A})$.
Then its $\mathcal{A}$-cover $\widehat{M}$ is cuspidal.
\end{prop}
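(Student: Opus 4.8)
The plan is to bound the weight spaces of $\widehat{M}$ uniformly by combining the invertibility of $t$ on the associative $\mathcal{A}$-module $\widehat{M}$ with annihilation relations extracted from Lemma \ref{lm:3.3}, which will confine a spanning set of each weight space to a bounded window of $t$-degrees. First I would record the grading: since $(x\otimes t^d)\otimes v$ and $(x\otimes t^d\xi)\otimes v$ with $v\in M_{\mu}$ lie in $(I\otimes M)_{d+\mu}$ by (\ref{eq3.1}), the space $\widehat{M}_\lambda$ is spanned by the classes $\ol{(x\otimes t^d)\otimes v}$ and $\ol{(x\otimes t^d\xi)\otimes v}$, with $x$ in a basis of $\dot{\frak g}$, $d\in\bZ$ and $v\in M_{\lambda-d}$. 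If $N$ is a bound with $\dim M_{\mu}\le N$ for all $\mu$, each fixed degree $d$ contributes at most $2N\dim\dot{\frak g}$; the only difficulty is that $d$ a priori ranges over all of $\bZ$. I would also note that, by Proposition \ref{prop:3.2}, $\mathcal{A}$ acts associatively with $t^0$ acting as the identity, so $t$ and $t^{-1}$ act as mutually inverse bijections on $\widehat{M}$; since $t\cdot\ol{(x\otimes t^d)\otimes v}=\ol{(x\otimes t^{d+1})\otimes v}$ raises the $L_0$-weight by $1$, the map $t$ gives linear isomorphisms $\widehat{M}_\lambda\to\widehat{M}_{\lambda+1}$. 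Thus $\dim\widehat{M}_\lambda$ is constant along each coset $\lambda+\bZ$, reducing the uniform bound to a finiteness statement.

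The heart of the argument is to promote Lemma \ref{lm:3.3} to honest relations inside $\widehat{M}$. Fixing $m$ as in that lemma, I claim that for all $k,p\in\bZ$, $x\in\dot{\frak g}$ and $v\in M$,
\[
\sum_{i=0}^m(-1)^i\binom{m}{i}\,(x\otimes t^{k-i})\otimes L_{p+i}v\in K(M),
\]
and the same with $x\otimes t^{k-i}$ replaced by $x\otimes t^{k-i}\xi$. To verify the defining condition of $K(M)$ I would test against the two families $f=t^s$ and $f=t^s\xi$: using $t^s(x\otimes t^{k-i})=x\otimes t^{k-i+s}$ and $t^s\xi(x\otimes t^{k-i})=x\otimes t^{k-i+s}\xi$, each test collapses exactly to the vanishing asserted in Lemma \ref{lm:3.3} (with $k$ replaced by $k+s$, and $y$ the appropriate even or odd element). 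Isolating the $i=0$ term then gives, inside $\widehat{M}$,
\[
\ol{(x\otimes t^{k})\otimes L_p v}=\sum_{i=1}^m(-1)^{i-1}\binom{m}{i}\,\ol{(x\otimes t^{k-i})\otimes L_{p+i}v},
\]
expressing a degree-$k$ vector through the $m$ degrees $k-1,\dots,k-m$, while isolating the $i=m$ term instead lets one raise the lowest degree.

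Iterating these two reductions should collapse every spanning vector into a single window of $m$ consecutive $t$-degrees, yielding $\dim\widehat{M}_\lambda\le 2mN\dim\dot{\frak g}$ and hence cuspidality of $\widehat{M}$. Since the bound is independent of $\lambda$, this simultaneously settles finiteness and the uniform estimate; the $t$-invertibility remark above then serves as a consistency check on the coset structure.

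The main obstacle is that the displayed relations involve the specific vectors $L_{p+i}v$ rather than arbitrary weight vectors $u\in M_{\lambda-k}$, so the degree-reduction as stated only controls elements lying in the image of the operators $L_p$. Closing this gap is the delicate point: I would use that the identities of Lemma \ref{lm:3.3} hold as operator identities on \emph{all} of $M$ (not on a cyclic subspace), together with $IM=M$ and the cuspidality of $M$, to perform the reduction on the full spanning set $\{(x\otimes t^d)\otimes v\}$ and its $\xi$-analogue. This is precisely the step that must be executed carefully, and it is the direct analogue of the corresponding estimates carried out in \cite{BF, CLW, XL2}.
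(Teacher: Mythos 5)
Your setup coincides with the paper's: the spanning set $\{(x\otimes t^{d})\otimes M_{\lambda-d},\ (x\otimes t^{d}\xi)\otimes M_{\lambda-d}\}$ for $\widehat{M}_\lambda$, the reduction to a single weight space via the $\mathcal{A}$-action, and the verification that $\sum_{i=0}^m(-1)^i\binom{m}{i}(x\otimes t^{k-i})\otimes L_{p+i}v$ (and its $\xi$-analogue) lies in $K(M)$ by testing against $t^s$ and $t^s\xi$ through Lemma \ref{lm:3.3} are all exactly what the paper does. But the step you explicitly defer --- passing from relations that only control vectors of the form $(x\otimes t^{k})\otimes L_{p}v$ to relations controlling $(x\otimes t^{k})\otimes u$ for arbitrary $u$ in a weight space of $M$ --- is the entire content of the proof, and the route you sketch for closing it (``operator identities on all of $M$'' together with $IM=M$ and cuspidality) is not the right tool: $IM=M$ says nothing about surjectivity of the individual operators $L_p$ on weight spaces, which is what your degree-reduction actually requires.

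The paper's resolution is a specific and simple device: base the differentiator at $L_0$, i.e.\ take $p=0$, so that the $i=0$ term of the relation is $(x\otimes t^{k})\otimes L_0v$. Since $L_0$ acts on $M_{\alpha+q}$ by the scalar $\alpha+q$, which is nonzero for all but at most one $q$ in the coset (and that exceptional $q$ can be arranged to lie inside the bounded window $|q|\le m/2$ where no reduction is needed), every $u\in M_{\alpha+q}$ with $|q|>m/2$ can be written as $L_0v$ with $v$ in the \emph{same} weight space. The relation then rewrites $(x\otimes t^{p-q})\otimes u$ modulo $K(M)$ in terms of $(x\otimes t^{p-q-i})\otimes L_iv$ with $1\le i\le m$, and each $L_iv$ lies in $M_{\alpha+q+i}$ with $|q+i|<|q|$, so induction on $|q|$ collapses every spanning vector into the window $|k|\le m/2$, giving the finite bound. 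Without this $p=0$ / invertibility-of-$L_0$ argument (or some substitute for it), your reduction only reaches the images $L_pM$, and the proof does not close.
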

\begin{proof}
Since $\widehat{M}$ is an $\mathcal{A}$-module, it is sufficient to show that one of its nonzero weight spaces is finite-dimensional. Fix a nonzero weight $\alpha+p,\ p\in \mathbb{Z}$ and let us prove that $\widehat{M}_{\alpha+p}=\mbox{span}\{(x\otimes t^{p-k})\otimes M_{\alpha+k}, (x\otimes t^{p-k}\xi) \otimes M_{\alpha+k}\mid k\in\mathbb{Z}, x\in\dot{\frak g}\}$ is finite-dimensional.

We will prove by induction on $|q|$ for $q\in \mathbb{Z}$ that for all $u\in M_{\alpha+q}$,
\begin{align*}
(x\otimes t^{p-q})\otimes u, (x\otimes t^{p-q}\xi)\otimes u
\in &\sum\limits_{|k|\le\frac{m}{2}}\Big((x\otimes t^{p-k})\otimes M_{\alpha+k}+(x\otimes t^{p-k}\xi) \otimes M_{\alpha+k}\Big)\\
&+K(M).
\end{align*}
Assume that this is true for $|q'|<|q|$. Now let us prove that this is true for $q$.
If $|q|\leq \frac{m}{2}$, the claim holds. If $|q|>\frac{m}{2}$, we may assume $q<-\frac{m}{2}$.
The proof for $q>\frac{m}{2}$ is similar. Since $L_0$ acts on $M_{\alpha+q}$ with a nonzero scalar,
we can write $u=L_0v$ for some $v\in M_{\alpha+q}$. Then
$$(x\otimes t^{p-q})\otimes L_0v=\sum\limits_{i=0}^{m}(-1)^i\binom{m}{i}(x\otimes t^{p-q-i})\otimes L_iv
-\sum\limits_{i=1}^{m}(-1)^i\binom{m}{i}(x\otimes t^{p-q-i})\otimes L_iv.$$
Note that $I=\dot{\frak g}\otimes \mathcal{A}$ has a natural $\mathcal{A}$-module structure. Hence, we have
\begin{align*}
&t^r\Big(\sum\limits_{i=0}^{m}(-1)^i\binom{m}{i}(x\otimes t^{j-i}) L_{p+i}v\Big)=\sum\limits_{i=0}^{m}(-1)^i\binom{m}{i}(x\otimes t^{r+j-i})L_{p+i}v=0,\\
&t^r\xi\Big(\sum\limits_{i=0}^{m}(-1)^i\binom{m}{i}(x\otimes t^{j-i}) L_{p+i}v\Big)=\sum\limits_{i=0}^{m}(-1)^i\binom{m}{i}(x\otimes t^{r+j-i})\xi L_{p+i}v=0
\end{align*}
by Lemma \ref{lm:3.3}, which implies that
$\sum\limits_{i=0}^{m}(-1)^i\binom{m}{i}(x\otimes t^{j-i})\otimes L_{p+i}v \in K(M)$
for all $i, j, p\in\bZ, x\in\dot{\frak g}$ and $v\in M$.
Since $|q+i|\leq \frac{m}{2}(1\leq i\leq m)$, we have
$$\sum\limits_{i=1}^{m}(-1)^i\binom{m}{i}(x\otimes t^{p-q-i})\otimes L_iv
\in \sum\limits_{|k|\le\frac{m}{2}}\Big((x\otimes t^{p-k})\otimes M_{\alpha+k}+(x\otimes t^{p-k}\xi) \otimes M_{\alpha+k}\Big)+K(M)$$ by induction assumption.
So does $(x\otimes t^{p-q})\otimes L_0v$. Similarly, we have
$$(x\otimes t^{p-q}\xi)\otimes L_0v\in \sum\limits_{|k|\le\frac{m}{2}}\Big((x\otimes t^{p-k})\otimes M_{\alpha+k}+(x\otimes t^{p-k}\xi) \otimes M_{\alpha+k}\Big)+K(M).$$
Therefore, the lemma follows from the fact that $\dim M_{\alpha+k}<\infty$ for any fixed $k$ and $\dot{\frak g}$ is finite-dimensional.
\end{proof}

\section{Cuspidal modules of $\frak g$}
We denote by $U(L)$ be the universal enveloping algebra of the Lie (super)algebra $L$. Let ${\cI}$ be the left ideal of $\cU=U(\wt{\frak g})$ generated by $t^i\cdot t^j-t^{i+j}$, $t^0-1$, $t^i\cdot \xi-t^i\xi$ and $\xi\cdot\xi$ for all $i, j\in\mathbb{Z}$. Then it is clear that $\cI$ is an ideal of $\cU$. Now we have the quotient algebra $\ol{\cU}=\cU/\cI=(U(\frak g)U(\mathcal{A}))/\mathcal{I}$. From PBW theorem, we may identify $\mathcal{A}$, $\frak g$ with their images in $\ol{\cU}$. Thus $\ol{\cU}=\mathcal{A}\cdot U(\frak g)$. Then the category of $\mathcal{A}\frak g$-modules
is equivalent to the category of $\ol{\cU}$-modules.

For $i\in \bZ\backslash \{0\}, m\in \bZ$ and $x\in\dot{\frak g}$, let
\begin{align*}
&X_i=t^{-i}\cdot L_i+\frac{i}{2}t^{-i}\xi\cdot G_i-L_0,\\
&Y_i=t^{-i}\cdot G_i-2t^{-i}\xi\cdot L_i-G_0+2\xi\cdot L_0,\\
&\overline{X}_m^x=t^{-m}\cdot (x\otimes t^m)+mt^{-m}\xi\cdot(x\otimes t^m\xi),\\
&\overline{Y}_m^x=t^{-m}\xi\cdot (x\otimes t^m)-t^{-m}\cdot(x\otimes t^m\xi)
\end{align*}
be elements in $\ol{\cU}$ and $\cT$ be the subspace of $\ol{\cU}$ spanned by $\{X_i, Y_i, \overline{X}_m^x, \overline{Y}_m^x\mid i\in\bZ\backslash \{0\}, m\in\bZ, x\in\dot{\frak g}\}$.
In Proposition 3.1 of \cite{CLL}, the authors show that
\begin{align*}
[G_0, X_i]=[G_0, Y_i]=[t^n, X_i]=[t^n, Y_i]=[\xi, X_i]=[\xi, Y_i]=0,
\end{align*}
and
\begin{align*}
&[X_i, X_j]=-jX_j+iX_i+(j-i)X_{i+j},\\
&[X_i, Y_j]=-jY_j+\frac{i}{2}Y_i+(j-\frac{i}{2})Y_{i+j}, \\
&[Y_i, Y_j]=2(X_i+ X_j-X_{i+j})
\end{align*}
for $i,j\in\bZ\backslash \{0\}$. Just by simple calculation, we can get
$$[G_0, \overline{X}_m^x]=[G_0, \overline{Y}_m^x]=[t^n, \overline{X}_m^x]=[t^n, \overline{Y}_m^x]=[\xi, \overline{X}_m^x]=[\xi, \overline{Y}_m^x]=0, \forall m\in\bZ.$$
Now, we have
\begin{align*}
[X_i, \overline{X}_m^x]=&[t^{-i}\cdot L_{i}+\frac{i}{2}t^{-i}\xi\cdot G_{i}-L_{0},\ t^{-m}\cdot (x\otimes t^m)+mt^{-m}\xi\cdot (x\otimes t^m\xi)]\\
=&t^{-i}[L_{i},t^{-m}]\cdot (x\otimes t^m)+t^{-(i+m)}\cdot [L_{i},x\otimes t^m]+mt^{-i}[L_{i},t^{-m}\xi]\cdot (x\otimes t^m\xi)\\
&+mt^{-(i+m)}\xi\cdot [L_{i},x\otimes t^m\xi]+\frac{i}{2}t^{-i}\xi[G_{i},t^{-m}]\cdot (x\otimes t^m)\\
&+\frac{i}{2}t^{-(i+m)}\xi\cdot [G_{i},x\otimes t^m]+\frac{im}{2}t^{-i}\xi[G_{i},t^{-m}\xi]\cdot (x\otimes t^m\xi)-[L_{0},t^{-m}]\cdot (x\otimes t^m)\\
&-t^{-m}\cdot [L_{0},x\otimes t^m]-m[L_{0},t^{-m}\xi]\cdot (x\otimes t^m\xi)-mt^{-m}\xi\cdot [L_{0},x\otimes t^m\xi]\\
=&-mt^{-m}\cdot (x\otimes t^m)+mt^{-(i+m)}\cdot (x\otimes t^{i+m})\\
&+m(\frac{i}{2}-m)t^{-m}\xi\cdot (x\otimes t^{m}\xi)+m(\frac{i}{2}+m)t^{-(i+m)}\xi\cdot (x\otimes t^{i+m}\xi)\\
&+\frac{im}{2}t^{-(i+m)}\xi\cdot (x\otimes t^{i+m}\xi)-\frac{im}{2}t^{-m}\xi\cdot (x\otimes t^{m}\xi)\\
=&-m\overline{X}_m^x+m\overline{X}_{i+m}^x.
\end{align*}
Similarly, we can get
\begin{align*}
&[X_i, \overline{Y}_m^x]=-m\overline{Y}_m^x+(\frac{i}{2}+m)\overline{Y}_{i+m}^x,\\
&[Y_i, \overline{X}_m^x]=m\overline{Y}_m^x-m\overline{Y}_{i+m}^x, \ \ [Y_i, \overline{Y}_m^x]=-\overline{X}_m^x+\overline{X}_{i+m}^x,\\
&[\overline{X}_m^x,\overline{X}_n^y]=\overline{X}_{m+n}^{[x,y]},\ \ [\overline{X}_m^x,\overline{Y}_n^y]=\overline{Y}_{m+n}^{[x,y]},\ \ [\overline{Y}_m^x,\overline{Y}_n^y]=0
\end{align*}
for $i,j\in\bZ\backslash \{0\}, m,n\in\bZ$. So we have the following lemma.
\begin{lemm}\label{lm:4.1}
\begin{enumerate}
\item $[\cT,G_0]=[\cT, \mathcal{A}]=0$;
\item $\cT$ is a Lie super subalgebra of $\ol{\cU}$.
\end{enumerate}
\end{lemm}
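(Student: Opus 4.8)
The plan is to assemble both assertions directly from the commutator identities already recorded above, since the substantive bracket computations are done: the relations among the Virasoro-type generators $X_i, Y_i$ come from Proposition~3.1 of \cite{CLL}, while the brackets involving $\overline{X}_m^x, \overline{Y}_m^x$ (and the cross-brackets) are the displayed calculations immediately preceding the statement. So the proof is essentially a bookkeeping step verifying that nothing escapes $\cT$.

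For part~(1) I would first note that every spanning element of $\cT$ commutes with $G_0$, with each $t^n$, and with $\xi$: the identities $[G_0,X_i]=[G_0,Y_i]=[t^n,X_i]=[t^n,Y_i]=[\xi,X_i]=[\xi,Y_i]=0$ are from \cite{CLL}, and the analogous vanishing for $\overline{X}_m^x,\overline{Y}_m^x$ was recorded just above. This gives $[\cT,G_0]=0$ at once. Since $\mathcal{A}=\bC[t^{\pm1}]\otimes\Lambda(1)$ is, as an associative subalgebra of $\ol{\cU}$, spanned by products of the $t^n$ with $\xi$ (because $t^n\xi=t^n\cdot\xi$ in $\ol{\cU}$ by the defining relations of $\cI$), any element commuting with all $t^n$ and with $\xi$ commutes with all of $\mathcal{A}$; hence $[\cT,\mathcal{A}]=0$.

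For part~(2) it suffices to check $[\cT,\cT]\subseteq\cT$, i.e.\ that the super-bracket of any two spanning elements is again a linear combination of spanning elements. All nine bracket types among $\{X_i,Y_i,\overline{X}_m^x,\overline{Y}_m^x\}$ are displayed above, and each right-hand side is a combination of generators of the expected kind, with parities consistent throughout ($X_i,\overline{X}_m^x$ even and $Y_i,\overline{Y}_m^x$ odd). The one point that needs care—and the only place I expect any friction at all—is that $X_i,Y_i$ are indexed by $i\in\bZ\setminus\{0\}$, so when two indices sum to zero the formulas for $[X_i,X_j]$, $[X_i,Y_j]$, and $[Y_i,Y_j]$ seem to produce the forbidden symbols $X_0$ or $Y_0$. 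I would dispose of this by computing directly from the definitions (using $t^0=1$) that $X_0=L_0-L_0=0$ and $Y_0=G_0-G_0=0$ in $\ol{\cU}$, so those terms simply vanish and the brackets stay in $\cT$. For the barred generators the index already runs over all of $\bZ$, so no degenerate case arises. Closure together with the correct parities then gives that $\cT$ is a Lie super subalgebra of $\ol{\cU}$.
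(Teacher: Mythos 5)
Your proof is correct and follows essentially the same route as the paper, which likewise derives the lemma as a summary of the displayed commutator computations (the $X_i,Y_j$ relations quoted from \cite{CLL} and the brackets involving $\overline{X}_m^x,\overline{Y}_m^x$ computed just above the statement). Your extra check that $X_0=Y_0=0$ in $\ol{\cU}$, so the formulas for $[X_i,X_{-i}]$ and $[Y_i,Y_{-i}]$ do not escape $\cT$, is a worthwhile detail the paper leaves implicit.
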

\begin{prop}\label{prop:4.2}
The associative superalgebras $\ol{\cU}$ and $\cK\otimes U(\cT)$ are isomorphic, where $\cK$ is the Weyl superalgebra
$\mathcal{A}[\frac{\partial}{\partial t},\frac{\partial}{\partial \xi}]$.
\end{prop}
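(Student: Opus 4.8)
The plan is to exhibit $\cK$ and $U(\cT)$ as two super-commuting subalgebras of $\ol{\cU}$ whose product is all of $\ol{\cU}$, and then to identify the multiplication map $\mu\colon\cK\otimes U(\cT)\to\ol{\cU}$, $k\otimes u\mapsto ku$, as the asserted isomorphism. First I would realize a copy of the Weyl superalgebra inside $\ol{\cU}$ by setting $\partial_t:=t^{-1}\cdot L_0$ and $\partial_\xi:=\xi\cdot L_0-G_0$. Using $[L_0,f]=L_0\cdot f$ and $[G_0,f]=G_0\cdot f$ for $f\in\mathcal{A}$, together with $\xi^2=0$ and $t^{-1}\cdot t=1$ in $\ol{\cU}$, one checks the defining relations
\[
[\partial_t,t]=1,\quad [\partial_t,\xi]=0,\quad [\partial_\xi,t]=0,\quad \{\partial_\xi,\xi\}=1,\quad \partial_\xi^2=0,
\]
where the last identity uses $G_0^2=\tfrac12[G_0,G_0]=-L_0$ in $\overline{\frak s}$. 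Hence $\mathcal{A}$, $\partial_t$, $\partial_\xi$ generate a homomorphic image of $\cK=\mathcal{A}[\frac{\partial}{\partial t},\frac{\partial}{\partial\xi}]$. Next I would verify that $U(\cT)$ super-commutes with this copy of $\cK$: Lemma~\ref{lm:4.1} already gives $[\cT,\mathcal{A}]=[\cT,G_0]=0$, and since each spanning element $X_i,Y_i,\overline{X}_m^x,\overline{Y}_m^x$ of $\cT$ is homogeneous of $L_0$-weight $0$ we also get $[\cT,L_0]=0$. Combining these with $\partial_t=t^{-1}L_0$ and $\partial_\xi=\xi L_0-G_0$ yields $[\cT,\cK]=0$, so that $\mu$ is a well-defined homomorphism of associative superalgebras.

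To prove surjectivity of $\mu$ I would solve the defining equations of the generators of $\cT$ for the generators of $\ol{\cU}$. Reading the relations for $X_i,Y_i$ as a linear system in $t^{-i}L_i$ and $t^{-i}G_i$ and inverting it (the $2\times2$ coefficient matrix over $\mathcal{A}$ has determinant $t^{-2i}$, using $\xi^2=0$) gives
\[
L_i=t^iX_i+t^iL_0-\tfrac{i}{2}t^i\xi Y_i-\tfrac{i}{2}t^i\xi G_0,
\]
with an analogous formula for $G_i$; since $L_0=t\partial_t$ and $G_0=\xi L_0-\partial_\xi$ lie in $\cK$, each $L_i,G_i$ lies in the subalgebra generated by $\cK$ and $\cT$. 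Likewise, inverting the relations for $\overline{X}_m^x,\overline{Y}_m^x$ yields
\[
x\otimes t^m=t^m\overline{X}_m^x+mt^m\xi\,\overline{Y}_m^x,\qquad x\otimes t^m\xi=t^m\xi\,\overline{X}_m^x-t^m\overline{Y}_m^x.
\]
As $\mathcal{A}\subseteq\cK$, every generator of $\ol{\cU}$ lies in $\mu(\cK\otimes U(\cT))$, so $\mu$ is surjective.

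Injectivity will be the main obstacle, and I would settle it by a PBW/triangularity argument. Starting from the PBW-type basis of $\ol{\cU}=\mathcal{A}\cdot U(\frak g)$ — products of a basis of $\mathcal{A}$ with ordered monomials in $L_i,G_i,x\otimes t^m,x\otimes t^m\xi$ — I would replace $\{L_0,G_0\}$ by $\{\partial_t,\partial_\xi\}$, the pairs $\{L_i,G_i\}$ for $i\neq0$ by $\{X_i,Y_i\}$, and the pairs $\{x\otimes t^m,x\otimes t^m\xi\}$ by $\{\overline{X}_m^x,\overline{Y}_m^x\}$, keeping $\mathcal{A}$ unchanged. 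The inversion formulas above show that, slotwise, this change of generators is invertible over $\mathcal{A}$ modulo terms of lower order in the $\frak g$-generators (the relevant $2\times2$ matrices over $\mathcal{A}$ have invertible determinants $t^{-2i}$ and $-t^{-2m}$), while $\partial_t^a=t^{-a}L_0^a+(\text{lower order in }L_0)$ because $[L_0,t^{-1}]=-t^{-1}$. Thus the passage to the new generators is a filtered, invertible change of variables, so ordered monomials in the new generators again form a basis of $\ol{\cU}$; and since $\cK$-generators super-commute with $\cT$-generators, each such monomial factors uniquely as a Weyl monomial in $\mathcal{A},\partial_t,\partial_\xi$ times a $U(\cT)$-monomial. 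This is precisely $\mu$ applied to a basis of $\cK\otimes U(\cT)$, whence $\mu$ is bijective.

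The delicate points, and where the real bookkeeping lies, are: tracking the $\mathcal{A}$-twists (the powers of $t$ introduced by the factors $t^{-i}$ and $t^{-m}$) so as to keep the coefficient matrices genuinely invertible; confirming that the $L_0$-power triangularity $\partial_t^a=t^{-a}L_0^a+\cdots$ separates the Weyl part cleanly from $U(\cT)$ (whose monomials involve no $L_0,G_0$); and checking that the resulting map on generators is order-preserving, so that no monomials collapse under $\mu$. Once these are in hand, both surjectivity and the PBW independence combine to give the isomorphism $\ol{\cU}\cong\cK\otimes U(\cT)$.
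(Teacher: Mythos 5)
Your construction is correct and is essentially the paper's own proof: the elements $\partial_t=t^{-1}L_0$ and $\partial_\xi=\xi L_0-G_0$ are exactly the paper's $-t^{-1}G_0^2$ and $-G_0-\xi G_0^2$ (so your copy of $\cK$ coincides with the subalgebra $\mathcal{A}[G_0]$ used there), the supercommutation with $\cT$ rests on the same Lemma~\ref{lm:4.1}, your inversion formulas for $L_i$, $G_i$, $x\otimes t^m$, $x\otimes t^m\xi$ are the same ones the paper uses for surjectivity, and injectivity is settled in both cases by the same PBW change-of-generators argument over $\mathcal{A}[G_0]\cong\cK$.
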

\begin{proof}
Note that $U(\cT)$ is an associative subalgebra of $\ol{\cU}$. Define the map $\tau: \mathcal{A}[G_0]\rightarrow \cK$ by
$\tau|_{\mathcal{A}}=\mbox{Id}_{\mathcal{A}}, \tau(G_0)=t\xi\frac{\partial}{\partial t}-\frac{\partial}{\partial \xi}.$
Obviously, we have
$$\tau(-t^{-1}G_0^2)=\frac{\partial}{\partial t},\ \tau(-G_0-\xi G_0^2)=\frac{\partial}{\partial \xi}, \ \tau(t^{\pm 1})=t^{\pm 1},\ \tau(\xi)=\xi$$
and $\tau$ is an isomorphism of associative superalgebras $\mathcal{A}[G_0]$ and $\cK$.

Then we define the map $\phi: \mathcal{A}[G_0]
\otimes U(\cT)\rightarrow \ol{\cU}$ by
$$\phi(t^i\xi^rG_0^k\otimes y)=t^i\xi^r\cdot G_0^k\cdot y+\cI,\ \forall i\in\bZ, r\in\{0,1\}, k\in \bZ_+, y\in U(\cT).$$
The restrictions of $\phi$ on $\mathcal{A}[G_0]$ and $U(\cT)$ are
well-defined homomorphisms of associative superalgebras. Note that $[\cT, G_0]=[\cT, \mathcal{A}]=0$, $\phi$ is a well-defined
homomorphism of associative superalgebras. From
\begin{align*}
&\phi(t^i\otimes X_i-\frac{i}{2}t^i\xi\otimes Y_i+t^iL_0\otimes 1-\frac{i}{2}t^i\xi G_0\otimes 1)=L_i,\\
&\phi(t^i\otimes Y_i+2t^i\xi\otimes X_i+t^iG_0\otimes 1)=G_i, \\
&\phi(t^i\otimes \overline{X}_i^x+it^i\xi\otimes \overline{Y}_i^x)=x\otimes t^i, \\
&\phi(t^i\xi\otimes\overline{X}_i^x-t^i\otimes \overline{Y}_i^x)=x\otimes t^i\xi,
\end{align*}
we can see that $\phi$ is surjective.

By PBW theorem, we know that $\ol{\cU}$ has a basis of monomials in variables
$\{L_i, G_i, x\otimes t^m, x\otimes t^m\xi\mid i\in \bZ\setminus\{0\}, m\in \bZ, x\in\dot{\frak g}\}$ over $\mathcal{A}[G_0]$. Therefore $\ol{\cU}$
has an $\mathcal{A}[G_0]$-baisis consisting of monomials in the variables
$\{t^{-i}\cdot L_i-L_0,\ t^{-i}\cdot G_i-G_0,\ \overline{X}_m^x, \overline{Y}_m^x\mid i\in \bZ\setminus\{0\}, m\in \bZ, x\in\dot{\frak g}\}$.
So $\phi$ is injective and hence an isomorphism.
\end{proof}

For the Weyl superalgebra $\cK$ and any $\lambda\in\bC$, let $\sigma_\lambda$ be the automorphism of $\cK$ with
$\sigma_\lambda(t\frac{d}{dt})=t\frac{d}{dt}+\lambda, \sigma_\lambda(\frac{\partial}{\partial \xi})=\frac{\partial}{\partial \xi}, \sigma_\lambda|_\mathcal{A}=\mbox{id}_\mathcal{A}$.
Denote $\mathcal{A}(\lambda)=\mathcal{A}^{\sigma_\lambda}$. It is clear that $\mathcal{A}(\lambda)\cong \cK/\cI_\lambda$, where $\cI_\lambda$ is the left ideal of $\cK$
generated by $t\frac{d}{dt}-\lambda$ and $\frac{\partial}{\partial \xi}$. We need the following lemma.

\begin{lemm}\cite[Lemma 3.5]{XL2}\label{lm:4.3}
Any simple weight $\cK$-module is isomorphic to some $\mathcal{A}(\lambda)$ for some $\lambda\in \bC$ up to a parity-change, where
$\mathcal{A}(\lambda)$ is a strictly simple $\cK$-module.
\end{lemm}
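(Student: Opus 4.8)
The plan is to reduce the classification to the well‑understood localized (even) Weyl algebra and to exploit the odd variables through a pair of complementary idempotents. First I would pin down the weight operator. In $\ol{\frak s}$ one has $L_0=-G_0^2$, and since $\tau(G_0)=t\xi\frac{\partial}{\partial t}-\frac{\partial}{\partial\xi}$ a short computation gives $\tau(G_0)^2=-t\frac{d}{dt}$, hence $\tau(L_0)=t\frac{d}{dt}$. Thus a weight $\cK$-module is exactly one on which $t\frac{d}{dt}$ acts diagonalizably. On such a module $V=\bigoplus_{\mu}V_\mu$ the generators shift weights as $t\colon V_\mu\to V_{\mu+1}$, $\frac{\partial}{\partial t}\colon V_\mu\to V_{\mu-1}$, while $\xi$ and $\frac{\partial}{\partial\xi}$ preserve each $V_\mu$; summing over a fixed coset gives a submodule, so by simplicity all weights lie in one coset $\lambda+\bZ$.

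The key device is the pair $e=\frac{\partial}{\partial\xi}\xi$ and $f=\xi\frac{\partial}{\partial\xi}$. Using $\{\frac{\partial}{\partial\xi},\xi\}=1$ and $\xi^2=(\frac{\partial}{\partial\xi})^2=0$ one checks that $e,f$ are orthogonal idempotents with $e+f=1$, and that $\xi\colon eV\to fV$ and $\frac{\partial}{\partial\xi}\colon fV\to eV$ are mutually inverse, so $V=eV\oplus fV$ with $fV=\xi\,eV$. Since $t,\frac{\partial}{\partial t}$ commute with $\xi,\frac{\partial}{\partial\xi}$, the subspace $eV$ is a module over the localized Weyl algebra $\cB=\bC\langle t^{\pm1},\frac{\partial}{\partial t}\rangle$. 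I would then show $eV$ is simple over $\cB$: for a nonzero $\cB$-submodule $W\subseteq eV$, the space $W+\xi W$ is $\cK$-stable — the only nonobvious check being $\frac{\partial}{\partial\xi}(W+\xi W)=\frac{\partial}{\partial\xi}\xi W=eW=W$ — hence equals $V$ by simplicity, and applying $e$ gives $W=e(W+\xi W)=eV$.

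Next I would classify simple weight $\cB$-modules. Writing $h=t\frac{d}{dt}$, the algebra $\cB$ is generated by the invertible $t^{\pm1}$ together with $h$, subject to $ht^{\pm1}=t^{\pm1}(h\pm1)$. For any weight vector $v\in(eV)_\mu$, the span of $\{t^k v\mid k\in\bZ\}$ is already a $\cB$-submodule (here invertibility of $t$ is what makes no weight exceptional), so simplicity forces $eV=\bigoplus_k\bC\,t^k v$ with one-dimensional weight spaces, isomorphic to the twisted module $\bC[t^{\pm1}]^{(\lambda)}$. Because $t$ is even and restricts to an isomorphism $(eV)_\mu\cong(eV)_{\mu+1}$, all weight spaces of $eV$ share the same parity, so $eV$ is homogeneous; reassembling $V=eV\oplus\xi\,eV$ and comparing with $\mathcal{A}(\lambda)=\bC[t^{\pm1}]\oplus\bC[t^{\pm1}]\xi$ identifies $V$ with $\mathcal{A}(\lambda)$ when $eV$ is even and with $\Pi(\mathcal{A}(\lambda))$ when $eV$ is odd. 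Finally, for strict simplicity I would take any (not necessarily graded) nonzero $\cK$-submodule $N$ of $\mathcal{A}(\lambda)$: since $t\frac{d}{dt}$ is diagonalizable, $N$ is automatically weight-graded, and if $N$ meets a weight space $\bC t^n\oplus\bC t^n\xi$, then applying $\frac{\partial}{\partial\xi}$, $\xi$ and $t^{\pm1}$ shows $N$ contains every weight space, so $N=\mathcal{A}(\lambda)$.

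The main obstacle is the $\cB$-step: one must be certain that the super-reduction through $e,f$ is lossless and that the weight classification over the localized Weyl algebra admits genuinely no exceptional weight. Both are handled by the invertibility of $t$, which removes the $h=0$ obstruction present for the non-localized Weyl algebra, together with the faithful correspondence $W\leftrightarrow W+\xi W$ between $\cB$-submodules of $eV$ and $\cK$-submodules of $V$; once these are in place, the remainder is bookkeeping with the idempotents.
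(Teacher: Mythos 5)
The paper does not actually prove this lemma: it is imported verbatim from \cite[Lemma 3.5]{XL2}, so there is no in-paper argument to compare against. Your proposal is a correct, self-contained proof, and its skeleton (identify the weight operator as $t\frac{d}{dt}$ via $\tau(L_0)=-\tau(G_0)^2$; split $V$ by the orthogonal idempotents $e=\frac{\partial}{\partial\xi}\xi$, $f=\xi\frac{\partial}{\partial\xi}$; reduce to simple weight modules over the localized Weyl algebra, where invertibility of $t$ forces one-dimensional weight spaces; reassemble and check strict simplicity of $\mathcal{A}(\lambda)$) is a legitimate route to the statement, essentially the rank-$(1,1)$ case of the argument in \cite{XL2}. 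All the individual computations you rely on check out: $e,f$ are orthogonal idempotents summing to $1$, $\xi$ and $\frac{\partial}{\partial\xi}$ are mutually inverse between $eV$ and $fV$, $\frac{\partial}{\partial t}(t^kv)=(\mu+k)t^{k-1}v$ for $v$ of weight $\mu$, and the final verification that any nonzero ungraded submodule of $\mathcal{A}(\lambda)$ is everything.

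One step needs tightening. In the paper's conventions ``simple'' for a supermodule means $\bZ_2$-graded-simple (that is why ``strictly simple'' is singled out as a separate, stronger property), so when you take an arbitrary nonzero $\cB$-submodule $W\subseteq eV$ and form $W+\xi W$, you cannot immediately invoke simplicity of $V$: $W+\xi W$ is a $\cK$-submodule but need not be a graded one. The repair is short: since $\cB$ is purely even, $(eV)_{\bar 0}$ and $(eV)_{\bar 1}$ are themselves $\cB$-submodules; running your argument with $W$ equal to a nonzero one of these (for which $W+\xi W$ \emph{is} graded) shows first that $eV$ is homogeneous, after which every $\cB$-submodule of $eV$ is automatically graded and your original argument applies verbatim. (You should also record explicitly that $eV\neq 0$, which follows since $fV=\xi\,eV$ forces $V=0$ otherwise.) With these two sentences added, the proof is complete.
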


Let $\fm$ be the maximal ideal of $\mathcal{A}$ generated by $t-1$ and $\xi$, and $\fm^k\mathcal{A}$ is spanned by the set
$$\{(t-1)^kt^i,(t-1)^kt^i\xi, (t-1)^{k-1}t^i\xi\mid i\in \bZ, k\in\mathbb{N}\}.$$
\begin{lemm}\label{lm:4.4}
Let $k,l\in\bZ_+$. Then for all $i,j\in\bZ$ and $x\in\dot{\frak g}$, there is
\begin{align*}
&[(t-1)^kL_i,(t-1)^l L_j]=(l-k+j-i)(t-1)^{k+l}L_{i+j}+(l-k)(t-1)^{k+l-1}L_{i+j},\\
&[(t-1)^kL_i,(t-1)^l G_j]=(j-\frac{i}{2})(t-1)^{k+l}G_{i+j}+(l-\frac{k}{2})(t-1)^{k+l-1}G_{i+j+1},\\
&[(t-1)^kG_i,(t-1)^l G_j]=-2(t-1)^{k+l}L_{i+j},\\
&[(t-1)^kL_i,x\otimes(t-1)^l t^j]=jx\otimes(t-1)^{k+l}t^{i+j}+lx\otimes(t-1)^{k+l-1}t^{i+j+1},\\
&[(t-1)^kL_i,x\otimes(t-1)^l t^j\xi]=(j+\frac{i}{2})x\otimes(t-1)^{k+l}t^{i+j}\xi+(l+\frac{k}{2})x\otimes(t-1)^{k+l-1}t^{i+j+1}\xi,\\
&[(t-1)^kG_i,x\otimes(t-1)^l t^j]=jx\otimes(t-1)^{k+l}t^{i+j}\xi+lx\otimes(t-1)^{k+l-1}t^{i+j+1}\xi,\\
&[(t-1)^kG_i,x\otimes(t-1)^l t^j\xi]=-x\otimes(t-1)^{k+l}t^{i+j}.
\end{align*}
\end{lemm}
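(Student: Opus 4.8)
The plan is to reduce every one of the seven brackets to the defining relations of $\widehat{\frak g}$ by bilinearity, after expanding the powers of $t-1$ through the $\mathcal A$-module structure on $\overline{\frak s}$ and on $\dot{\frak g}\otimes\mathcal A$. Writing $(t-1)^k=\sum_{a=0}^{k}\binom{k}{a}(-1)^{k-a}t^a$ and using $t^aL_i=L_{i+a}$, $t^aG_i=G_{i+a}$, one has $(t-1)^kL_i=\sum_{a}\binom{k}{a}(-1)^{k-a}L_{i+a}$, and likewise for $(t-1)^kG_i$, for $x\otimes(t-1)^lt^j=\sum_b\binom{l}{b}(-1)^{l-b}\,x\otimes t^{j+b}$, and for $x\otimes(t-1)^lt^j\xi$. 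Each stated bracket is then a finite double sum of the basic brackets listed in the definition, evaluated entry-by-entry. Since the two arguments of each bracket never both lie in $\dot{\frak g}\otimes\mathcal A$, no $C_1$-term ever occurs, and since we compute inside $\overline{\frak s}=\frak s/\bC C$ there is no $C$-term either, so all central contributions may be discarded.

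First I would record ``coefficient-function'' formulas valid for arbitrary bosonic $f=\sum_af_at^a$, $g=\sum_bg_bt^b\in\bC[t^{\pm1}]$, where a Laurent polynomial $h=\sum_nh_nt^n$ acts by $hL_{i+j}=\sum_nh_nL_{i+j+n}$ (and similarly on $G_{i+j}$ and on $x\otimes t^{\,\cdot}$). From $[L_{i+a},L_{j+b}]=(j+b-i-a)L_{i+j+a+b}$ and the splitting $j+b-i-a=(j-i)+(b-a)$, resummation (with $tg'=\sum_bbg_bt^b$) yields
\[
[fL_i,gL_j]=(j-i)(fg)L_{i+j}+t(fg'-f'g)L_{i+j}.
\]
The same bookkeeping gives
\[
[fL_i,gG_j]=(j-\tfrac i2)(fg)G_{i+j}+t(fg'-\tfrac12 f'g)G_{i+j},\qquad [fG_i,gG_j]=-2(fg)L_{i+j},
\]
and, from $[L_i,x\otimes t^j]=jx\otimes t^{i+j}$, $[L_i,x\otimes t^j\xi]=(j+\tfrac i2)x\otimes t^{i+j}\xi$, $[G_i,x\otimes t^j]=jx\otimes t^{i+j}\xi$, $[G_i,x\otimes t^j\xi]=-x\otimes t^{i+j}$,
\[
[fL_i,x\otimes g]=x\otimes t^i f(tg'),\qquad [fG_i,x\otimes g\xi]=-x\otimes t^i fg,
\]
together with the two $\xi$-variants $[fL_i,x\otimes g\xi]=x\otimes t^i\big(t(fg'+\tfrac12 f'g)+\tfrac i2 fg\big)\xi$ and $[fG_i,x\otimes g]=x\otimes t^i f(tg')\xi$.

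Finally I would specialize $f=(t-1)^k$ and $g=(t-1)^l$ (respectively $g=(t-1)^lt^j$), using $\big((t-1)^k\big)'=k(t-1)^{k-1}$. Then $fg=(t-1)^{k+l}$ and each Wronskian-type combination collapses, e.g.\ $fg'-f'g=(l-k)(t-1)^{k+l-1}$ and $fg'-\tfrac12 f'g=(l-\tfrac k2)(t-1)^{k+l-1}$, producing exactly the leading coefficients $l-k+j-i$, $j-\tfrac i2$, $-2$, $j$, $j+\tfrac i2$ recorded in the lemma. The single fact underlying the final normal-form matching is that $t$ raises the index by one; applied as $t(t-1)^{m-1}=(t-1)^m+(t-1)^{m-1}$ it puts the first and third lines in the form $(t-1)^{k+l}L_{i+j}+(t-1)^{k+l-1}L_{i+j}$, while keeping the $t$ in front and reading $t\,(\,\cdot\,)_{n}=(\,\cdot\,)_{n+1}$ it puts the remaining lines in the form $(t-1)^{k+l-1}G_{i+j+1}$, $(t-1)^{k+l-1}t^{i+j+1}$, etc. The computations are routine; the only genuine obstacle is the bookkeeping, in particular tracking the half-integer coefficients coming from $[L_i,G_j]$ and $[L_i,x\otimes t^j\xi]$ and choosing the right one of the equivalent normal forms, which is where sign or normalization slips are most likely to enter.
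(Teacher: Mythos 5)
Your proposal is correct and is exactly the ``direct computation'' the paper invokes (its proof is a single line), just organized efficiently through the general $[fL_i,gL_j]$-type formulas for Laurent polynomials $f,g$ and the specialization $f=(t-1)^k$, $g=(t-1)^l t^j$; I checked the Wronskian coefficients and the normal-form rewritings $t(t-1)^{m-1}=(t-1)^m+(t-1)^{m-1}$ versus $tG_{i+j}=G_{i+j+1}$, and they reproduce all seven identities.
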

\begin{proof}
The results follow from direct computations.
\end{proof}
\begin{lemm}\label{lm:4.5}
For $k\in\bZ_+$, let $\fa_k=(t-1)^{k+1}{\overline {\frak s}}\ltimes(\dot{\frak g}\otimes{\fm}^k\mathcal{A})$. Then
\begin{enumerate}
\item[(1)] $\fa_0$ is a Lie super subalgebra of $\frak g$;
\item[(2)] $\fa_k$ is an ideal of $\fa_0$;
\item[(3)] $[\fa_1,\fa_k]\subseteq\fa_{k+1}$;
\item[(4)] The ideal of $\fa_0$ generated by $(t-1)^{k}\mathcal{W}$ contains $\fa_{k}$.
\end{enumerate}
\end{lemm}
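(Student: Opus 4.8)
The plan is to read off all four statements directly from the bracket formulas of Lemma~\ref{lm:4.4}, together with the loop multiplication $[x\otimes f,y\otimes g]=[x,y]\otimes fg$. The unifying principle is that in every bracket the leading power of $(t-1)$ is additive while each correction term lowers that power by exactly one; since $\fm^{j+1}\mathcal{A}\subseteq\fm^{j}\mathcal{A}$ and the spanning set of $\fm^{k}\mathcal{A}$ already contains the elements $(t-1)^{k-1}t^{i}\xi$, every term produced lands inside the claimed subspace. For (1), note first that $\fm^{0}\mathcal{A}=\mathcal{A}$, so $\fa_{0}=(t-1)\overline{\frak s}\ltimes(\dot{\frak g}\otimes\mathcal{A})$. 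I would then check closure on three kinds of pairs: putting $k=l=1$ in the first three identities sends a pair from $(t-1)\overline{\frak s}$ into $(t-1)^{2}\overline{\frak s}+(t-1)\overline{\frak s}=(t-1)\overline{\frak s}$; putting $k=1,\,l=0$ in the last four identities sends $(t-1)\overline{\frak s}\times(\dot{\frak g}\otimes\mathcal{A})$ into $\dot{\frak g}\otimes\mathcal{A}$; and the loop-loop bracket stays in $\dot{\frak g}\otimes\mathcal{A}$.

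Parts (2) and (3) require the same computation with shifted exponents. For (2) I would run each bracket $[\fa_{0},\fa_{k}]$ through Lemma~\ref{lm:4.4}, the factors from $\fa_{0}$ contributing exponent $1$ (or $0$ on the loop side) and those from $\fa_{k}$ contributing exponent $k+1$ on the $\overline{\frak s}$ side and $k$ or $k-1$ on the loop side; for (3) the factors from $\fa_{1}$ instead contribute exponent $2$ (or $1,0$ on the loop side). The only delicate terms are the odd corrections; for instance $[(t-1)^{2}L_{i},x\otimes(t-1)^{k-1}t^{j}\xi]$ produces $x\otimes(t-1)^{k}t^{i+j+1}\xi$, which lies in $\fm^{k+1}\mathcal{A}$ precisely because $(t-1)^{(k+1)-1}t^{i}\xi$ is one of the declared generators of $\fm^{k+1}\mathcal{A}$. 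Checking that in every bracket the leading term and both corrections land in $(t-1)^{k+1}\overline{\frak s}$ and $\dot{\frak g}\otimes\fm^{k}\mathcal{A}$ (for (2)), respectively in $(t-1)^{k+2}\overline{\frak s}$ and $\dot{\frak g}\otimes\fm^{k+1}\mathcal{A}$ (for (3)), completes these parts.

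For (4) I take $k\ge1$ and write $J$ for the ideal of $\fa_{0}$ generated by $\{(t-1)^{k}L_{i}\mid i\in\bZ\}$, the goal being to show $J$ contains each of the five spanning families of $\fa_{k}$. Bracketing the generator $(t-1)^{k}L_{i}$ with $(t-1)L_{p}\in\fa_{0}$ gives $(k-1+i-p)(t-1)^{k+1}L_{i+p}$ plus a multiple of the generator $(t-1)^{k}L_{i+p}$, so choosing $i+p=n$ with $k-1+i-p\ne0$ yields $(t-1)^{k+1}L_{n}\in J$ for all $n$. Next $[(t-1)^{k}L_{i},x\otimes t^{j}]=j\,x\otimes(t-1)^{k}t^{i+j}$ (take $j\ne0$) gives every $x\otimes(t-1)^{k}t^{n}$; bracketing these with $x'\otimes t^{m}\xi$ and using $\dot{\frak g}=[\dot{\frak g},\dot{\frak g}]$ gives every $x\otimes(t-1)^{k}t^{n}\xi$; and then $[(t-1)^{k}L_{i},x\otimes t^{j}\xi]=(j+\tfrac{i}{2})x\otimes(t-1)^{k}t^{i+j}\xi+\tfrac{k}{2}x\otimes(t-1)^{k-1}t^{i+j+1}\xi$, after subtracting the first (already-produced) term, gives every $x\otimes(t-1)^{k-1}t^{n}\xi$, the coefficient $\tfrac{k}{2}$ being nonzero as $k\ge1$.

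The main obstacle is the odd Virasoro family $(t-1)^{k+1}G_{n}$: the only odd element of $\fa_{0}$ available to create $G$'s is $(t-1)G_{p}$, and
\[
[(t-1)^{k}L_{i},(t-1)G_{p}]=(p-\tfrac{i}{2})(t-1)^{k+1}G_{i+p}+(1-\tfrac{k}{2})(t-1)^{k}G_{i+p+1}
\]
carries an unwanted $(t-1)^{k}G$ term. The trick is that this unwanted term depends only on $s:=i+p$, whereas the coefficient $p-\tfrac{i}{2}=s-\tfrac{3i}{2}$ of the wanted term depends on $i$; fixing $s$ and subtracting two such brackets with distinct values of $i$ cancels the $(t-1)^{k}G_{s+1}$ contribution and leaves a nonzero multiple of $(t-1)^{k+1}G_{s}$. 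Hence $(t-1)^{k+1}G_{n}\in J$ for all $n$, and since the five families together span $\fa_{k}$ we conclude $\fa_{k}\subseteq J$.
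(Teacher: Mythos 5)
Your proposal is correct and takes essentially the same route as the paper: parts (1)--(3) are direct verifications from Lemma~\ref{lm:4.4}, and part (4) is obtained by bracketing the generators $(t-1)^{k}L_{i}$ against elements of $\fa_{0}$ and cancelling the lower-order correction terms, which is exactly the paper's device (applied there to the $L$-family via an antisymmetrized pair of brackets, with the remaining four families dismissed as ``similarly''---details your argument supplies). The only blemish is a harmless sign slip: by Lemma~\ref{lm:4.4} the coefficient you write as $(k-1+i-p)$ should be $(1-k+p-i)$, but the nonvanishing condition you need is unaffected.
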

\begin{proof}
Note that $\fa_0=\mbox{span}\{(t-1)L_i, (t-1)G_i, x\otimes t^m, x\otimes t^m\xi\mid i\in \bZ, m\in \bZ, x\in\dot{\frak g}\}$ and
\begin{eqnarray*}
&\fa_k=\mbox{span}\{(t-1)^{k+1}L_i, (t-1)^{k+1}G_i, x\otimes (t-1)^kt^i, x\otimes (t-1)^kt^i\xi, x\otimes (t-1)^{k-1}t^i\xi\\
&\mid i\in\bZ, k\in\mathbb{N}, x\in\dot{\frak g}\}.
\end{eqnarray*}
Therefore, the conclusions $(1)-(3)$ can be verified directly by using Lemma \ref{lm:4.4}.

The ideal generated by $(t-1)^{k}L_i$ in $\fa_0$ is denoted by $\frak b$. Since
$$[(t-1)^kL_i, (t-1)L_0]-[(t-1)^kL_0, (t-1)L_i]=-2i(t-1)^{k+1}L_i,$$
we have $(t-1)^{k+1}L_i\in\frak b$ for $i\neq 0$. Furthermore, since
$$[(t-1)^kL_1, (t-1)L_{-1}]-[(t-1)^kL_{-1}, (t-1)L_1]=-4(t-1)^{k+1}L_0\in\frak b,$$
there is $(t-1)^{k+1}L_i\in\frak b$ for $i\in\bZ$. Similarly, we can get $(t-1)^{k+1}G_i, x\otimes (t-1)^kt^i, x\otimes (t-1)^kt^i\xi, x\otimes (t-1)^{k-1}t^i\xi\in\frak b$.
Hence, $\fa_{k}\subseteq \frak b$ and the conclusion (4) holds.
\end{proof}
\begin{prop}
The Lie superalgebras $\cT$ and $\fa_0$ are isomorphic.
\end{prop}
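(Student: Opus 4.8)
The plan is to write down an explicit linear bijection between the given spanning sets of $\cT$ and $\fa_0$ and to check that it matches the two lists of structure constants. The natural guess, reading off the defining brackets of $\frak g$, is the assignment
\[
\Phi(X_i)=L_i-L_0,\quad \Phi(Y_i)=G_i-G_0,\quad \Phi(\overline{X}_m^x)=x\otimes t^m,\quad \Phi(\overline{Y}_m^x)=-x\otimes t^m\xi,
\]
for $i\in\bZ\setminus\{0\}$, $m\in\bZ$, $x\in\dot{\frak g}$; note that this need not respect the $L_0$-grading, since every generator of $\cT$ has $L_0$-weight $0$ while $\fa_0$ is not $\mathrm{ad}\,L_0$-stable. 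Both spanning sets are in fact bases, so $\Phi$ is a well-defined linear bijection and the whole content is the bracket-matching. On the $\fa_0$-side this is elementary: from $L_i-L_0=\sum_{j=0}^{i-1}(t-1)L_j$ for $i>0$ (and the analogous relation for $i<0$) together with $(t-1)L_i=(L_{i+1}-L_0)-(L_i-L_0)$, the set $\{L_i-L_0\mid i\neq0\}$ spans the same space as $\{(t-1)L_i\}$, i.e. the Virasoro part of $\fa_0$, and is manifestly independent; the same holds for $\{G_i-G_0\}$, while $\{x\otimes t^m\}$ and $\{x\otimes t^m\xi\}$ are obviously bases of the two halves of $\dot{\frak g}\otimes\mathcal A$. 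On the $\cT$-side, linear independence of $X_i,Y_i,\overline{X}_m^x,\overline{Y}_m^x$ follows from the PBW-type basis of $\ol{\cU}$ exhibited in the proof of Proposition \ref{prop:4.2} (under $\ol{\cU}\cong\cK\otimes U(\cT)$ they span the degree-one part of the $U(\cT)$-factor). Hence $\Phi$ carries a basis of $\cT$ bijectively onto a basis of $\fa_0$.

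It then remains to verify, relation by relation, that $\Phi$ intertwines the brackets listed before the statement with the corresponding brackets of $\frak g$. For the super-Virasoro generators this amounts to the identities
\begin{align*}
[L_i-L_0,\,L_j-L_0]&=(j-i)L_{i+j}+iL_i-jL_j,\\
[L_i-L_0,\,G_j-G_0]&=(j-\tfrac{i}{2})G_{i+j}+\tfrac{i}{2}G_i-jG_j,\\
[G_i-G_0,\,G_j-G_0]&=2(L_i+L_j-L_{i+j}-L_0),
\end{align*}
which are exactly the $\Phi$-images of $[X_i,X_j]$, $[X_i,Y_j]$ and $[Y_i,Y_j]$; the key feature is that the spurious $L_0$- and $G_0$-terms cancel. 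For the loop part and the cross terms I would compute directly from the defining relations of $\frak g$ (e.g. $[L_i,x\otimes t^j]=jx\otimes t^{i+j}$, $[G_i,x\otimes t^j\xi]=-x\otimes t^{i+j}$, and $[x\otimes f,y\otimes g]=[x,y]\otimes fg$). Here the sign in $\Phi(\overline{Y}_m^x)=-x\otimes t^m\xi$ is forced: matching $[Y_i,\overline{X}_m^x]=m\overline{Y}_m^x-m\overline{Y}_{i+m}^x$ against $[G_i-G_0,x\otimes t^m]=m(x\otimes t^{i+m}\xi-x\otimes t^m\xi)$ pins it down, and with this sign one checks that $[X_i,\overline{Y}_m^x]$, $[Y_i,\overline{Y}_m^x]$, $[\overline{X}_m^x,\overline{X}_n^y]$, $[\overline{X}_m^x,\overline{Y}_n^y]$ and $[\overline{Y}_m^x,\overline{Y}_n^y]$ all map correctly (the last using $\xi^2=0$).

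I expect the main obstacle to be not any single computation but the combination of two points. First, correctly pinning down the correspondence — in particular the sign on the odd generators $\overline{Y}_m^x$ and the coefficient bookkeeping through the odd brackets, where super-antisymmetry must be tracked carefully. Second, giving a clean justification that the generators of $\cT$ are linearly independent, so that $\Phi$ is genuinely injective and not merely surjective; this is precisely where the PBW description coming from Proposition \ref{prop:4.2} is the most economical tool, since a change-of-variable argument shows the $X_i,Y_i$ are triangular with unit diagonal over the explicit basis exhibited there.
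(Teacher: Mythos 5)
Your proposal is correct and coincides with the paper's own proof: the paper defines exactly the same map $\varphi(X_i)=L_i-L_0$, $\varphi(Y_i)=G_i-G_0$, $\varphi(\overline{X}_m^x)=x\otimes t^m$, $\varphi(\overline{Y}_m^x)=-x\otimes t^m\xi$ (with the same sign) and leaves the bracket check to the reader, which you carry out. Your additional justification of linear independence of the generators of $\cT$ via the PBW basis from Proposition~\ref{prop:4.2} fills in a point the paper passes over silently.
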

\begin{proof}
Since
\begin{align*}
&(t-1)L_i=L_{i+1}-L_i=(L_{i+1}-L_0)-(L_i-L_0),\\
&(t-1)G_i=G_{i+1}-G_i=(G_{i+1}-G_0)-(G_i-G_0),
\end{align*}
we have $\fa_0=(t-1){\overline {\frak s}}\ltimes(\dot{\frak g}\otimes \mathcal{A})$ is spanned by the set
$\{L_i-L_0, G_i-G_0, x\otimes t^m, x\otimes t^m\xi\mid i\in \bZ\setminus\{0\}, m\in \bZ, x\in\dot{\frak g}\}$.
It is easy to verify that the linear map $\varphi: \cT\rightarrow \fa_0$ defined by
\begin{align*}
&\varphi(X_i)=L_i-L_0,\ \ \varphi(Y_i)=G_i-G_0,\\
&\varphi(\overline{X}_m^x)=x\otimes t^m, \ \ \varphi(\overline{Y}_m^x)=-x\otimes t^m\xi
\end{align*}
is a Lie superalgebra isomorphism.
\end{proof}

For any $\fa_0$-module $V$, we have the $\mathcal{A}\frak g$-module $\Gamma(\lambda, V)=(\mathcal{A}(\lambda)\otimes V)^{\varphi_1}$, where
$\varphi_1: \ol{\cU}\stackrel{\phi^{-1}}{\longrightarrow}\cK\otimes U(\cT)\stackrel{1\otimes \varphi}{\longrightarrow} \cK\otimes U(\fa_0)$. More precisely, we give a kind of cuspidal module as follows.
\begin{defi}\label{def:4.7}
Let $V$ be an $\fa_0$-module $V$, then
$\Gamma(\lambda, V):=\mathcal{A}\otimes V$ becomes an $\mathcal{A}\frak g$-module with actions
\begin{align*}
&t^i\xi^r\circ(f\otimes v)=t^i\xi^rf\otimes v,\\
&L_i\circ(f\otimes v)=t^if\otimes(L_i-L_0)\centerdot v-(-1)^{|f|}\frac{i}{2}t^i\xi f\otimes(G_i-G_0)\centerdot v\\
&\quad\quad\quad\quad\quad\quad+t^i(\lambda f+t\frac{\partial}{\partial t}(f))\otimes v+\frac{i}{2}t^i\xi\frac{\partial}{\partial\xi}(f)\otimes v,\\
&G_i\circ(f\otimes v)=(-1)^{|f|}t^if\otimes(G_i-G_0)\centerdot v+2t^i\xi f\otimes(L_i-L_0)\centerdot v\\
&\quad\quad\quad\quad\quad\quad+t^i\xi(\lambda f+t\frac{\partial}{\partial t}(f))\otimes v-t^i\frac{\partial}{\partial\xi}(f)\otimes v,\\
&(x\otimes t^i)\circ(f\otimes v)=t^if\otimes (x\otimes t^i)\centerdot v-(-1)^{|f|}it^i\xi f\otimes (x\otimes t^i\xi)\centerdot v,\\
&(x\otimes t^i\xi)\circ(f\otimes v)=t^i\xi f\otimes (x\otimes t^i)\centerdot v+(-1)^{|f|}t^if\otimes (x\otimes t^i\xi)\centerdot v
\end{align*}
for any $f\in \mathcal{A}, v\in V, i\in\bZ$ and $r\in\{0,1 \}$, where the symbol $\centerdot$ represents the action of $\fa_0$ on $V$.
\end{defi}

\begin{lemm}\label{lm:4.8}
1. For any $\lambda\in \bC$ and any simple $\fa_0$-module $V$, $\Gamma(\lambda, V)$ is a simple weight $\mathcal{A}\frak g$-module.

2. Up to a parity change, any simple weight $\mathcal{A}\frak g$-module $M$ is isomorphic to some $\Gamma(\lambda, V)$ for some $\lambda\in \mbox{supp}(M)$ and
some simple $\fa_0$-module $V$.
\end{lemm}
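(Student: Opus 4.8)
The plan is to reduce both statements to the tensor-product machinery of Lemma~\ref{lm:2.1}. By the equivalence between $\mathcal{A}\frak g$-modules and $\ol{\cU}$-modules recorded above, together with Proposition~\ref{prop:4.2} and the Lie superalgebra isomorphism $\varphi\colon\cT\to\fa_0$, we may work entirely over $\ol{\cU}\cong\cK\otimes U(\cT)\cong\cK\otimes U(\fa_0)$, the composite isomorphism being $\varphi_1=(1\otimes\varphi)\circ\phi^{-1}$. Under $\varphi_1$, Definition~\ref{def:4.7} shows that $\Gamma(\lambda,V)$ is exactly $\mathcal{A}(\lambda)\otimes V$ regarded as a $\cK\otimes U(\fa_0)$-module. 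By Lemma~\ref{lm:4.3}, $\mathcal{A}(\lambda)$ is a strictly simple $\cK$-module, and $\cK=\mathcal{A}[\frac{\partial}{\partial t},\frac{\partial}{\partial\xi}]$ has a countable basis, so in both parts I would apply Lemma~\ref{lm:2.1}(2) with $B=U(\fa_0)$ and $B'=\cK$, after swapping the two tensor factors (which, by Lemma~\ref{lm:2.1}(1), costs only a parity change).

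For part~1, $V$ is a simple $\fa_0$-module, hence a simple $U(\fa_0)$-module, and $\mathcal{A}(\lambda)$ is strictly simple over $\cK$. Lemma~\ref{lm:2.1}(2)(3) then gives that $V\otimes\mathcal{A}(\lambda)$, and therefore $\mathcal{A}(\lambda)\otimes V=\Gamma(\lambda,V)$, is a simple $\cK\otimes U(\fa_0)$-module, i.e.\ a simple $\mathcal{A}\frak g$-module. To check that it is a weight module, I would note that in $\overline{\frak s}$ one has $G_0^2=-L_0$, so that $\tau(L_0)=t\frac{\partial}{\partial t}$ acts as the first-factor Euler operator; consequently $L_0$ acts on $t^i\otimes v$ and $t^i\xi\otimes v$ by $\lambda+i$, matching the explicit $L_0$-action in Definition~\ref{def:4.7}. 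Hence $L_0$ is diagonalizable and $\mathrm{supp}(\Gamma(\lambda,V))\subseteq\lambda+\bZ$.

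For part~2, let $M$ be a simple weight $\mathcal{A}\frak g$-module, viewed as a simple $\cK\otimes U(\fa_0)$-module. The essential step is to locate a strictly simple $\cK=\bC\otimes\cK$-submodule of $M$; once this is done, Lemma~\ref{lm:2.1}(2)(4) (again with $B=U(\fa_0)$, $B'=\cK$) produces a simple $\fa_0$-module $V$ with $M\cong V\otimes\mathcal{A}(\lambda)$, which after reordering the factors is $\Gamma(\lambda,V)$ up to a parity change. To build the submodule, recall that $L_0$ corresponds to $t\frac{\partial}{\partial t}$, while $\frac{\partial}{\partial\xi}\in\cK$ has $L_0$-degree $0$ and satisfies $(\frac{\partial}{\partial\xi})^2=0$. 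Choosing a nonzero weight vector $w\in M$, the vector $v_0:=\frac{\partial}{\partial\xi}w$ (or $v_0:=w$ when $\frac{\partial}{\partial\xi}w=0$) is a nonzero weight vector of some weight $\lambda\in\mathrm{supp}(M)$ with $\frac{\partial}{\partial\xi}v_0=0$ and $(t\frac{\partial}{\partial t}-\lambda)v_0=0$. Since $\mathcal{A}(\lambda)\cong\cK/\cI_\lambda$ with $\cI_\lambda$ generated by $t\frac{\partial}{\partial t}-\lambda$ and $\frac{\partial}{\partial\xi}$, these relations yield a surjection $\mathcal{A}(\lambda)\twoheadrightarrow\cK v_0$, which is an isomorphism because $\mathcal{A}(\lambda)$ is (strictly) simple. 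Thus $\cK v_0$ is the required strictly simple $\cK$-submodule.

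The main obstacle is precisely this construction in part~2: producing a generator $v_0$ of a genuine copy of $\mathcal{A}(\lambda)$ sitting inside $M$ as a submodule (not merely a subquotient), and verifying the two annihilation relations that pin down $\lambda$. The remainder is bookkeeping: matching the two orders of the tensor factors in $\cK\otimes U(\fa_0)$ versus $U(\fa_0)\otimes\cK$ and absorbing the attendant twist into a parity change via Lemma~\ref{lm:2.1}(1), so as to land exactly on $\Gamma(\lambda,V)$ or its parity flip, as in the statement.
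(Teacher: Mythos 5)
Your proposal is correct and follows essentially the same route as the paper: part 1 is the combination of Lemma \ref{lm:4.3} with Lemma \ref{lm:2.1}(2), and part 2 locates a copy of $\mathcal{A}(\lambda)$ (or $\Pi(\mathcal{A}(\lambda))$) inside $M^{\varphi_1^{-1}}$ by passing from a homogeneous weight vector $v$ to a vector in $\bC[\frac{\partial}{\partial\xi}]v$ killed by $\cI_\lambda$, exactly as in the paper, before invoking Lemma \ref{lm:2.1}(2)(4). The only cosmetic differences are that you make the symmetry of the two tensor factors explicit (the paper leaves it implicit) and you should insist that the initial weight vector be homogeneous so that $\cK v_0$ is $\mathcal{A}(\lambda)$ or its parity flip.
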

\begin{proof}
1. From Lemmas \ref{lm:2.1} and \ref{lm:4.3}, we know that $\mathcal{A}(\lambda)\otimes V$ is a simple $\cK\otimes U(\cT)$-module for any
$\lambda\in \bC$ and any simple $\fa_0$-module $V$. From the definition of $\Gamma(\lambda, V)$, we have the first statement.

2. Let $M$ be any simple weight $\mathcal{A}\frak g$-module with $\lambda\in \mbox{supp}(M)$. Then $M$ is a simple $\ol{\cU}$-module and $M^{\varphi_1^{-1}}$
is a simple $\cK\otimes U(\fa_0)$-module.
Fix a nonzero homogeneous element $v\in (M^{\varphi_1^{-1}})_\lambda$. Since $V'=\bC[\frac{\partial}{\partial\xi}]v$ is a finite-dimensional
super subspace with $\frac{\partial}{\partial\xi}$ acting nilpotently, we may find a nonzero homogeneous element $v'\in V'$ with $\cI_\lambda v'=0$.
Clearly, $\cK v'$ is isomorphic to $\mathcal{A}(\lambda)$ or $\Pi(\mathcal{A}(\lambda))$. From Lemma \ref{lm:2.1}, there exists a simple $U(\fa_0)$-module $V$ such
that $M^{\varphi_1^{-1}}\cong \mathcal{A}(\lambda)\otimes V$ or $M^{\varphi_1^{-1}}\cong \Pi(\mathcal{A}(\lambda))\otimes V$. So this conclusion holds.
\end{proof}

Thus, to classify all simple weight $\mathcal{A}\frak g$-modules, it suffices to classify all simple $\fa_0$-modules. In particular, to classify all simple
cuspidal $\mathcal{A}\frak g$-modules, it suffices to classify all finite-dimensional simple $\fa_0$-modules. Now we introduce two known conclusions that we are going to use.
\begin{lemm}\cite[Lemma 2.6]{CLW}\label{lm:4.9}
Any co-finite ideal of $(t-1)\mathcal{W}$ contains $(t-1)^k\mathcal{W}$ for large $k$.
\end{lemm}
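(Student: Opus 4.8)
Write $L:=(t-1)\mathcal{W}$ and $L_k:=(t-1)^k\mathcal{W}$, so that the claim is exactly that a co-finite ideal $I$ of $L$ contains $L_N$ for some large $N$. Realizing $\mathcal{W}=\bC[t^{\pm1}]\partial_t$ with $L_i=t^{i+1}\partial_t$, the $L_k$ form a descending chain of ideals of $L$ with $\dim L_k/L_{k+1}=1$ and $\bigcap_k L_k=0$, and the single bracket identity
\[
[(t-1)^a f\partial_t,(t-1)^b g\partial_t]=(b-a)(t-1)^{a+b-1}fg\,\partial_t+(t-1)^{a+b}(fg'-gf')\partial_t
\]
(for $f,g\in\bC[t^{\pm1}]$) governs everything. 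Abbreviating $e_m:=(t-1)^m\partial_t$ and $y_{m,l}:=(t-1)^m t^l\partial_t$, it yields $[e_1,e_m]=(m-1)e_m$, so $\mathrm{ad}(e_1)$ scales $e_m$ by $m-1$.

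The plan is first to produce the ``diagonal'' elements $e_m$ inside $I$ by a weight argument, then to propagate them across the whole of $L_N$ by brackets. For the first step, note $M:=L/I$ is a finite-dimensional $L$-module under the adjoint action; let $h$ denote the action of $e_1$ on $M$ and $S\subseteq\bC$ its (finite) spectrum. Because $[e_1,e_m]=(m-1)e_m$, the operator by which $e_m$ acts shifts generalized $h$-eigenvalues by $m-1$; choosing $m$ so large that $\mu+m-1\notin S$ for every $\mu\in S$ forces $e_m$ to act as $0$ on $M$, i.e. $[e_m,L]\subseteq I$. Bracketing with $e_1$ then gives $(m-1)e_m\in I$, hence $e_m\in I$ for all $m\ge m_0$ (some $m_0\ge 2$). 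This disposes of the graded symbols cheaply.

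For the second step, fix $m\ge m_0$. Since $y_{m,l}=\sum_{i=0}^{l}\binom{l}{i}e_{m+i}$ for $l\ge 0$, all $y_{m,l}$ with $l\ge0$ already lie in $I$. For $l<0$ the relation $[(t-1)t^l\partial_t,e_m]=(m-1)y_{m,l}-l\,y_{m+1,l-1}\in I$ gives the congruence $y_{m,l}\equiv\frac{l}{m-1}y_{m+1,l-1}\pmod I$, and iterating it reduces every $y_{m,l}$ with $l\le-1$, modulo $I$, to a scalar multiple of a member of the $t^{-1}$-family $\{y_{M,-1}\}$; then $[e_a,y_{M,-1}]=(M-a)y_{M+a-1,-1}-y_{M+a,-2}\in I$ together with the one-step reduction of $y_{M+a,-2}$ collapses to $(2M-2)y_{M+a-1,-1}\in I$, placing $y_{K,-1}$ in $I$ for $K$ large. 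This captures $y_{N,l}$ for all exponents $l$ bounded below by $2m_0-2-N$; the remaining far-negative tail is handled symmetrically by applying the same argument to the $\theta$-twist of $M$, where $\theta$ is the involution $t\mapsto t^{-1}$ (which preserves $L$ because it fixes $t=1$, and reverses the Laurent direction). For $N$ large the two ranges overlap and cover all $l\in\bZ$, so $L_N\subseteq I$.

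The main obstacle is precisely this last point. The one-dimensionality of $L_k/L_{k+1}$ means the weight argument instantly controls leading symbols, but each $L_k$ is still infinite-dimensional in the Laurent variable, and a purely structural argument (using only $\dim L/I<\infty$ and $\bigcap_k L_k=0$) is circular, since a finite-codimension subspace of $L$ is closed in the $L_k$-adic topology exactly when it already contains some $L_k$. One is therefore forced to use the bracket, and the argument closes only because the structure constants $m-1$ and $2M-2$ appearing above are nonzero and because the $t\mapsto t^{-1}$ symmetry lets one reach both Laurent tails; this is the analogue for $\mathcal{W}$ of the explicit generation computations carried out in Lemma \ref{lm:4.5}(4).
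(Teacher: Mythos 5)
The paper does not actually prove this lemma --- it is quoted verbatim from \cite[Lemma 2.6]{CLW} --- so there is no internal proof to compare against; I have therefore checked your argument on its own terms, and it is correct. Your two-stage strategy (first force the ``diagonal'' elements $e_m=(t-1)^m\partial_t$ into $I$ by the eigenvalue-shift argument on the finite-dimensional quotient $L/I$, using $[e_1,e_m]=(m-1)e_m$ with $m-1\neq 0$; then propagate along the Laurent direction by explicit brackets) is a legitimate self-contained proof, in the same ad hoc computational spirit as the generation argument in Lemma \ref{lm:4.5}(4) of this paper, and all the bracket identities you quote check out. Two points deserve a cleaner statement. First, the congruence $y_{m,l}\equiv\frac{l}{m-1}y_{m+1,l-1}\pmod I$ as written moves $l$ \emph{away} from $-1$; what you actually iterate is its inverse $y_{m+1,l-1}\equiv\frac{m-1}{l}\,y_{m,l}$ (legitimate since $l\neq 0$ at every intermediate step), and the chain terminates at $y_{N+l+1,-1}$ only while the first index stays $\geq m_0$ --- which is exactly why your range of validity is of the form $l\geq \mathrm{const}-N$, so the phrasing ``reduces every $y_{m,l}$ with $l\leq -1$'' overstates what that step alone gives. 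Second, the involution $\theta(L_n)=-L_{-n}$ should be checked to preserve not just $L=(t-1)\mathcal{W}$ but each $(t-1)^k\mathcal{W}$; this is true (one computes $\theta\bigl((t-1)^kt^n\partial_t\bigr)=(-1)^{k+1}(t-1)^kt^{2-n-k}\partial_t$, reflecting that order of vanishing at $t=1$ is coordinate-independent), and it is what makes the transfer of the bounded-below range for $\theta(I)$ into a bounded-above range for $I$ work; with both ranges overlapping for $N$ large, the conclusion $(t-1)^N\mathcal{W}\subseteq I$ follows. Neither point is a gap, only a place where the write-up should be tightened.
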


\begin{lemm}\cite[Theorem 2.1]{Mo}\label{lm:4.10}
Let $V$ be a finite-dimensional module for the Lie superalgebra $L=L_{\bar{0}}\oplus L_{\bar{1}}$
such that the elements of $L_{\bar{0}}$ and $L_{\bar{1}}$ respectively are nilpotent endmorphisms of $V$. Then there exists a nonzero element $v\in V$
such that $xv=0$ for all $x\in L$.
\end{lemm}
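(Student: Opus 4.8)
The statement is a super analogue of Engel's theorem, so my plan is to run the classical Engel induction in the $\bZ_2$-graded setting, taking care that every subalgebra, complementary vector, and fixed space produced along the way is homogeneous.

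First I would reduce to the case $\dim L<\infty$. Since $V$ is finite-dimensional, $\gl(V)=\mathrm{End}(V)$ is finite-dimensional, so the image $\bar L$ of $L$ under the representation $L\to\gl(V)$ is a finite-dimensional Lie superalgebra all of whose homogeneous elements act as nilpotent operators, and a common null vector for $\bar L$ is exactly one for $L$. Hence I may assume $L\subseteq\gl(V)$ with $\dim L<\infty$ and induct on $\dim L$; the cases $\dim L\le 1$ are clear, since a single homogeneous nilpotent operator has nonzero kernel. I would also record the standard fact that a nilpotent homogeneous $x\in\gl(V)$ has $\mathrm{ad}\,x$ nilpotent on $\gl(V)$: iterating the adjoint action gives $(\mathrm{ad}\,x)^m(y)=\sum_{a+b=m}c_{a,b}\,x^ayx^b$ for suitable scalars $c_{a,b}$, and for $m\ge 2n-1$ (where $x^n=0$) each summand has $a\ge n$ or $b\ge n$ and so vanishes.

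The central step is to produce a graded ideal of codimension one. I would pick a maximal proper graded subalgebra $K\subsetneq L$. Every homogeneous $x\in K$ acts nilpotently on $L$ through $\mathrm{ad}$, hence nilpotently on the quotient module $L/K$, so $K$ acts on the finite-dimensional space $L/K$ by homogeneous nilpotent operators. Applying the induction hypothesis to $K$ (here $\dim K<\dim L$) yields a nonzero \emph{homogeneous} $\bar y\in L/K$ killed by $K$; lifting gives a homogeneous $y\in L\setminus K$ with $[K,y]\subseteq K$. Then $K+\bC y$ is a graded subalgebra properly containing $K$, so $L=K+\bC y$ by maximality, and $[L,K]\subseteq K$ shows $K$ is a graded ideal of codimension one with homogeneous complement $\bC y$.

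Finally I would pass to the fixed space $W=\{v\in V\mid Kv=0\}$, a graded subspace that is nonzero by the induction hypothesis applied to $K$. It is $L$-stable: for homogeneous $x\in K$ and $v\in W$, $x(yv)=(-1)^{|x||y|}y(xv)+[x,y]v=0$ since $xv=0$ and $[x,y]\in K$, so $yv\in W$ and thus $LW\subseteq W$. As $y$ restricts to a homogeneous nilpotent operator on $W$, there is a nonzero $w\in\ker(y|_W)$; then $Kw=0$ and $yw=0$ give $Lw=0$, the required common null vector. The main obstacle here is not any single estimate but the grading bookkeeping: one must be able to take $K$ graded, complete it to $L$ by a \emph{homogeneous} $y$ (which is precisely what the inductive super-Engel statement on $L/K$ supplies, and explains why the hypothesis on the odd part cannot be dropped), and keep $W$ graded. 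The only genuinely ``super'' computations are the sign in $x(yv)=(-1)^{|x||y|}y(xv)+[x,y]v$ and the nilpotency of $\mathrm{ad}\,x$; both are routine once parities are tracked, and the remainder is the verbatim Engel induction.
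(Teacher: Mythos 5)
The paper itself gives no proof of this lemma: it is quoted verbatim from \cite[Theorem 2.1]{Mo}, so there is no internal argument to compare against. Your route --- the graded Engel induction --- is the natural one, and most of it is sound: the reduction to $\bar L\subseteq\gl(V)$, the nilpotency of $\mathrm{ad}\,x$ for a homogeneous nilpotent $x$, the normalizer step producing a homogeneous $y\in L\setminus K$ with $[K,y]\subseteq K$ (together with the tacit but easy observation that the homogeneous components of a $K$-null vector of $L/K$ are again $K$-null, so $\bar y$ may indeed be taken homogeneous), and the final argument on $W=\{v\in V\mid Kv=0\}$ all go through with the signs you indicate.

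The one step you assert without justification --- and it is exactly the point where the super case is not ``the verbatim Engel induction'' --- is that $K+\bC y$ is closed under the bracket. For even $y$ this is automatic because $[y,y]=0$; for odd $y$, $[y,y]=2y^2$ is in general a nonzero even element, and nothing you have written places it in $K+\bC y$. The gap is fillable, but it costs a second appeal to the maximality of $K$: by the super Jacobi identity, $[k,[y,y]]=[[k,y],y]+(-1)^{|k|}[y,[k,y]]\in [K,y]+[y,K]\subseteq K$ for homogeneous $k\in K$, and $[[y,y],[y,y]]=0$ since $[y,y]$ is even, so $K+\bC[y,y]$ is a graded subalgebra of $L$. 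If $[y,y]\notin K$, this subalgebra properly contains $K$, hence equals $L$ by maximality; but then $L_{\bar{1}}=K_{\bar{1}}$, contradicting $y\in L_{\bar{1}}\setminus K$. Therefore $[y,y]\in K$, and only after this is $K+\bC y$ a subalgebra, equal to $L$, with $K$ a graded ideal of codimension one. With that inserted your proof is complete; your closing claim that the only genuinely ``super'' ingredients are the sign in $x(yv)$ and the nilpotency of $\mathrm{ad}\,x$ is precisely what this step refutes.
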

\begin{lemm}\label{lm:4.11}
1. Let $V$ be any finite-dimensional $\fa_0$-module. Then there exists $k\in \bN$ such that $\fa_kV=0$.

2. Let $V$ be any finite-dimensional simple $\fa_0$-module. Then $\fa_1V=0$.
\end{lemm}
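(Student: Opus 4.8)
The plan is to handle the two parts separately, with part 1 providing the scaffolding for part 2. For part 1, the idea is to pass to the even subalgebra and invoke the filtration structure of the $\fa_k$'s established in Lemma \ref{lm:4.5}. The even part of $\fa_0$ contains the copy $(t-1)\cW$ of the (shifted) Witt algebra spanned by the $L_i - L_0$. Since $V$ is finite-dimensional, the kernel of the action map $(t-1)\cW \to \gl(V)$ is a co-finite ideal of $(t-1)\cW$, so by Lemma \ref{lm:4.9} it contains $(t-1)^{k}\cW$ for some large $k$. Thus $(t-1)^{k}\cW$ acts trivially on $V$. By Lemma \ref{lm:4.5}(4), the ideal of $\fa_0$ generated by $(t-1)^{k}\cW$ contains $\fa_k$; however, acting trivially on $V$ is not automatically preserved under the ideal closure unless one argues that the annihilator of $V$ in $\fa_0$ is itself an ideal. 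It is: $\ann_{\fa_0}(V) = \{a \in \fa_0 \mid aV = 0\}$ is an ideal of $\fa_0$ because $V$ is a module. Hence $\ann_{\fa_0}(V)$ is an ideal containing $(t-1)^{k}\cW$, so by Lemma \ref{lm:4.5}(4) it contains $\fa_k$, giving $\fa_k V = 0$. This proves part 1.

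For part 2, I would leverage part 1 together with the nilpotency/triangularization input of Lemma \ref{lm:4.10}. By part 1 there is a smallest $k$ with $\fa_k V = 0$; the goal is to show $k \le 1$, i.e.\ $\fa_1 V = 0$. The key structural fact is Lemma \ref{lm:4.5}(3), which gives $[\fa_1, \fa_k] \subseteq \fa_{k+1}$, so the successive quotients behave nilpotently. Consider the image $\bar{\fa}$ of $\fa_1$ acting on $V$; I want to show this image is zero. The strategy is to consider $\fa_1 / (\fa_1 \cap \ann_{\fa_0}(V))$ acting on $V$ and argue that, modulo higher terms, $\fa_1$ acts by nilpotent operators. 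Concretely, using Lemma \ref{lm:4.5}(3) repeatedly, $\fa_1$ acts on $V$ through a nilpotent Lie superalgebra (since the descending chain $\fa_1 \supseteq \fa_2 \supseteq \cdots$ terminates on $V$ and each bracket $[\fa_1, \fa_k]$ drops into $\fa_{k+1}$). Applying Lemma \ref{lm:4.10} to the image of $\fa_1$ in $\gl(V)$, there is a nonzero $v \in V$ annihilated by all of $\fa_1$.

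To finish, I would exploit simplicity: the set $V^{\fa_1} = \{v \in V \mid \fa_1 v = 0\}$ is nonzero by the preceding step, and I claim it is an $\fa_0$-submodule. This requires that $\fa_0$ normalizes $\fa_1$, which follows from Lemma \ref{lm:4.5}(2) (each $\fa_k$, in particular $\fa_1$, is an ideal of $\fa_0$): for $a \in \fa_0$, $w \in \fa_1$, $v \in V^{\fa_1}$ one has $w(av) = (-1)^{|a||w|}a(wv) + [w,a]v = 0$ since both $wv = 0$ and $[w,a] \in \fa_1$. Hence $V^{\fa_1}$ is a nonzero $\fa_0$-submodule of the simple module $V$, forcing $V^{\fa_1} = V$, i.e.\ $\fa_1 V = 0$. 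The main obstacle I anticipate is verifying that $\fa_1$ genuinely acts by a nilpotent Lie superalgebra so that Lemma \ref{lm:4.10} applies; this hinges on confirming that the chain $\fa_1 \supseteq \fa_2 \supseteq \cdots$ stabilizes at $0$ on $V$ (from part 1) and that Lemma \ref{lm:4.5}(3) forces the even and odd parts of the image to be nilpotent endomorphisms, which is exactly the hypothesis of Lemma \ref{lm:4.10}.
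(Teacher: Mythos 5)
Your proposal follows essentially the same route as the paper's proof: part 1 via Lemma \ref{lm:4.9} together with Lemma \ref{lm:4.5}(4) applied to the ideal $\ann_{\fa_0}(V)$, and part 2 via the filtration property $[\fa_1,\fa_k]\subseteq\fa_{k+1}$, the super Engel-type Lemma \ref{lm:4.10}, and the observation that $V^{\fa_1}$ is a nonzero $\fa_0$-submodule because $\fa_1$ is an ideal. The single step you flag as still needing confirmation---that the even and odd parts of the image of $\fa_1$ consist of nilpotent endomorphisms---is exactly the point the paper addresses, via the computation $((\fa_1)_{\bar{0}}+\ann(V))^{k-1}\subseteq\ann(V)$ for even elements and the identity $x^2=\frac{1}{2}[x,x]$ for odd $x$.
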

\begin{proof}
1. Let $V$ be any finite-dimensional $\fa_0$-module. Then $V$ is a finite-dimensional $(t-1)\mathcal{W}$-module. By Lemma \ref{lm:4.9}, there exsits
$k\in \bZ$ such that $(t-1)^k\mathcal{W}V=0$. So the first statement follows from Lemma \ref{lm:4.5}.

2.  Let $V$ be any finite-dimensional simple $\fa_0$-module. Then $V$ is a simple finite-dimensional   $\fa_0/\mbox{ann}(V)$-module, where
$\mbox{ann}(V)$ is the ideal of $\fa_0$ that annihilates $V$ and $\fa_k\subseteq \mbox{ann}(V)$ for some $k\in\bN$. So $V$ is a finite-dimensional
module for $(\fa_0)_{\bar{0}}+\mbox{ann}(V)$. Since $[\fa_1,\fa_k]\subseteq\fa_{k+1}$ in Lemma \ref{lm:4.5}, we have
$$((\fa_1)_{\bar{0}}+\mbox{ann}(V))^{k-1}\subseteq (\fa_k)_{\bar{0}}+\mbox{ann}(V)=\mbox{ann}(V),$$
which implies that $(\fa_1)_{\bar{0}}+\mbox{ann}(V)$ acts nilpotently on $V$. Since $[x, x]\in (\fa_1)_{\bar{0}}+\mbox{ann}(V)$ for any
$x\in (\fa_1)_{\bar{1}}+\mbox{ann}(V)$, every element in $(\fa_1)_{\bar{1}}+\mbox{ann}(V)$ acts nilpotently on $V$. Hence, by Lemma \ref{lm:4.10}, there is
a nonzero element $v\in V$ annihilated by $\fa_1+\mbox{ann}(V)$.

Let $V'=\{v\in V|xv=0,\forall x\in \fa_1\}$. So $V'\neq \emptyset$. For any $y\in \fa_0, x\in \fa_1$ and $v\in V'$, there is
$xyv=(-1)^{|x||y|}yxv+[x, y]v=0$, which implies that $yv\in V'$. Hence $V'=V$ by the simplicity of $V$. And therefore $\fa_1V=0$.
\end{proof}

\begin{lemm}\cite[Lemma 2.4]{CLL}\label{lm:4.12}
Let $R=(t-1){\overline {\frak s}}/(t-1)^{2}{\overline {\frak s}}$. Then $R$ is a two dimensional Lie superalgebra with $|X|=\bar{0}, |Y|=\bar{1}$ and nontrivial
brackets $[X, Y]=\frac{1}{2}Y$.
\end{lemm}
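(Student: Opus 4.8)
The plan is to realize $R$ explicitly as a two‑dimensional super vector space and then read its brackets directly off Lemma \ref{lm:4.4} with $k=l=1$.

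First I would pin down the underlying vector space. Using the $\mathcal{A}$‑action $t^iL_n=L_{n+i}$, $t^iG_n=G_{n+i}$, the space $(t-1)\overline{\frak s}$ is spanned by the first differences $(t-1)L_i=L_{i+1}-L_i$ and $(t-1)G_i=G_{i+1}-G_i$, while $(t-1)^2\overline{\frak s}$ is spanned by the second differences $(t-1)^2L_i=L_{i+2}-2L_{i+1}+L_i$ and $(t-1)^2G_i=G_{i+2}-2G_{i+1}+G_i$. Identifying $\sum_i a_iL_i$ (resp. $\sum_i a_iG_i$) with its coefficient sequence, the even (resp. odd) part of $(t-1)\overline{\frak s}$ is the codimension‑one subspace cut out by $\sum_i a_i=0$, and that of $(t-1)^2\overline{\frak s}$ is the codimension‑two subspace cut out by $\sum_i a_i=0$ and $\sum_i ia_i=0$. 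Hence $R_{\bar{0}}$ and $R_{\bar{1}}$ are each one‑dimensional, the class of an element being detected by the functional $\sum_i ia_i$; in particular all consecutive differences $L_{i+1}-L_i$ (resp. $G_{i+1}-G_i$) become congruent modulo $(t-1)^2\overline{\frak s}$. I set $X$ to be the class of $(t-1)L_0=L_1-L_0$ and $Y$ the class of $(t-1)G_0=G_1-G_0$, so that $|X|=\bar{0}$, $|Y|=\bar{1}$.

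Before computing brackets I would note that $R$ is well defined, i.e. that $(t-1)^2\overline{\frak s}$ is an ideal of $(t-1)\overline{\frak s}$; this is the $\dot{\frak g}=0$ instance of Lemma \ref{lm:4.5}(2), and is also visible directly in Lemma \ref{lm:4.4}, whose right‑hand sides with $k=1$ and $l\ge 1$ always land in $(t-1)^{l}\overline{\frak s}$. Then I evaluate the three brackets with $k=l=1$, $i=j=0$. The bracket $[X,X]$ vanishes automatically since $X$ is even; Lemma \ref{lm:4.4} gives $[(t-1)G_0,(t-1)G_0]=-2(t-1)^2L_0$, whose class is zero because $(t-1)^2L_0\in(t-1)^2\overline{\frak s}$, so $[Y,Y]=0$; and $[(t-1)L_0,(t-1)G_0]=\tfrac{1}{2}(t-1)G_1=\tfrac{1}{2}(G_2-G_1)$, where $G_2-G_1$ is a consecutive difference and hence congruent to $Y$, giving $[X,Y]=\tfrac{1}{2}Y$.

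The computation itself is routine; the only point requiring care is the vector‑space bookkeeping in the first step — verifying that $R_{\bar{0}}$ and $R_{\bar{1}}$ are genuinely one‑dimensional and, crucially, that the output $(t-1)G_1$ of the mixed bracket represents the same class $Y$ as $(t-1)G_0$ rather than some other scalar multiple. Once the two functionals $\sum_i a_i$ and $\sum_i ia_i$ are in hand this is immediate, and assembling $[X,X]=[Y,Y]=0$ together with $[X,Y]=\tfrac{1}{2}Y$ finishes the proof.
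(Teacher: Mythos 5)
Your proof is correct and complete. The paper itself does not prove this statement---it imports it from \cite[Lemma 2.4]{CLL}---but the remark it appends immediately after the lemma ($(t-1)^2L_i\equiv 0$, $t(t-1)L_i\equiv (t-1)L_i$, hence $X=(t-1)L_0$, $Y=(t-1)G_0$) is exactly the bookkeeping you make precise with the two functionals $\sum_i a_i$ and $\sum_i ia_i$, and your bracket computations from Lemma \ref{lm:4.4} with $k=l=1$ check out, including the one point that genuinely needs care: $(t-1)G_1\equiv (t-1)G_0$ modulo $(t-1)^2\overline{\frak s}$, so the mixed bracket really gives $[X,Y]=\frac{1}{2}Y$ and not some other multiple.
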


In fact, we have $(t-1)^2L_i\equiv 0$ and $(t-1)^2G_i\equiv 0$ in the Lie superalgebra $R$. Then we have
$$t(t-1)L_i\equiv (t-1)L_i,\ t(t-1)G_i\equiv (t-1)G_i,\ \forall i\in \bZ, $$
which implies that $X=(t-1)L_0$ and $Y=(t-1)G_0$.
\begin{theo}\label{th:4.13}
Up to a parity change, any simple cuspidal $\mathcal{A}\frak g$-module is isomorphic to a tensor module $\Gamma(\lambda, V)$ for some finite-dimensional simple $R\ltimes\dot{\frak g}$-module $V$ and some $\lambda\in \bC$.
\end{theo}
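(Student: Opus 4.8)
The plan is to run the whole argument through the equivalence already packaged in Lemma~\ref{lm:4.8} and then to cut the coefficient module down to a finite-dimensional object over a finite-dimensional quotient. Let $M$ be a simple cuspidal $\mathcal{A}\frak g$-module. Being cuspidal, $M$ is in particular a simple weight module, so by Lemma~\ref{lm:4.8}(2), after a possible parity change we may write $M\cong\Gamma(\lambda,V)$ for some $\lambda\in\supp(M)$ and some simple $\fa_0$-module $V$, where $\Gamma(\lambda,V)=\mathcal{A}\otimes V$ carries the action of Definition~\ref{def:4.7}. It then remains to show two things: that $V$ is finite-dimensional, and that its $\fa_0$-action factors through $R\ltimes\dot{\frak g}$.

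First I would pin down the weight spaces of $\Gamma(\lambda,V)$. Putting the index $i=0$ in the formula for $L_i\circ(f\otimes v)$ of Definition~\ref{def:4.7} gives $L_0\circ(f\otimes v)=(\lambda f+t\tfrac{\partial}{\partial t}(f))\otimes v$, so each basis vector $t^n\otimes v$ and $t^n\xi\otimes v$ is an $L_0$-eigenvector of weight $\lambda+n$. Hence $\Gamma(\lambda,V)_{\lambda+n}=(t^n\otimes V)\oplus(t^n\xi\otimes V)$, a space of dimension $2\dim V$ for every $n\in\bZ$. Since $M$ is cuspidal its weight multiplicities are uniformly bounded, and therefore $\dim V<\infty$.

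Next I would invoke Lemma~\ref{lm:4.11}(2): a finite-dimensional simple $\fa_0$-module is annihilated by $\fa_1$, so $V$ descends to a module over the quotient $\fa_0/\fa_1$. The last step is to identify this quotient. Because $\fa_0=(t-1)\overline{\frak s}\ltimes(\dot{\frak g}\otimes\mathcal{A})$, $\fa_1=(t-1)^2\overline{\frak s}\ltimes(\dot{\frak g}\otimes\fm\mathcal{A})$, and $\fa_1$ is an ideal of $\fa_0$ by Lemma~\ref{lm:4.5}, the quotient is a Lie superalgebra and splits as
$$\fa_0/\fa_1\cong\big((t-1)\overline{\frak s}/(t-1)^2\overline{\frak s}\big)\ltimes\big(\dot{\frak g}\otimes(\mathcal{A}/\fm\mathcal{A})\big).$$
The first factor is the two-dimensional superalgebra $R$ of Lemma~\ref{lm:4.12}, while $\mathcal{A}/\fm\mathcal{A}\cong\bC$ (from the description of $\fm\mathcal{A}$ preceding Lemma~\ref{lm:4.4}, the image of $t^0$ spans it and every $t^n\xi$ already lies in $\fm\mathcal{A}$), so the second factor is $\dot{\frak g}$. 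Thus $\fa_0/\fa_1\cong R\ltimes\dot{\frak g}$, and $V$ is a finite-dimensional simple $R\ltimes\dot{\frak g}$-module, which is exactly the claim.

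I do not expect a serious obstacle here, since the proof is essentially an assembly of the preceding results. The one place where cuspidality (rather than merely the Harish-Chandra condition) is used is the implication $\dim\Gamma(\lambda,V)_{\lambda+n}=2\dim V\Rightarrow\dim V<\infty$; and the only structural input beyond the cited lemmas is the clean identification of $\fa_0/\fa_1$, for which the explicit basis of $\fm^k\mathcal{A}$ and the relations of Lemma~\ref{lm:4.12} do all the work. The $L_0$-eigenvalue computation and the bracket checks confirming the splitting are routine and follow directly from Definition~\ref{def:4.7} and Lemma~\ref{lm:4.5}.
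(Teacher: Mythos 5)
Your proposal is correct and follows essentially the same route as the paper: reduce via Lemma~\ref{lm:4.8}(2) to a simple $\fa_0$-module $V$, observe that cuspidality forces $\dim V<\infty$ (the paper asserts this reduction in the sentence preceding Lemma~\ref{lm:4.9}, and your weight-space computation $\dim\Gamma(\lambda,V)_{\lambda+n}=2\dim V$ is exactly the justification), then apply Lemma~\ref{lm:4.11}(2) and the identification $\fa_0/\fa_1\cong R\ltimes\dot{\frak g}$ from Lemma~\ref{lm:4.12}. No gaps.
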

\begin{proof}
Let $V$ be any finite-dimensional simple $\fa_0$-module. We know that $V$ is a finite-dimensional simple $\fa_0/\fa_1$-module by Lemma \ref{lm:4.11}. Note that
$$\fa_0/\fa_1\cong \big((t-1)\overline {\frak s}/ (t-1)^2\overline {\frak s}\big)\ltimes \big(\dot{\frak g}\otimes (\mathcal{A}/\fm\mathcal{A})\big)\cong R\ltimes\dot{\frak g}$$
by Lemma \ref{lm:4.12}. Therefore, from Lemma \ref{lm:4.8}, the conclusion is true.
\end{proof}
So far, we know that, to classify all simple cuspidal $\mathcal{A}\frak g$-modules, it suffices to classify all finite-dimensional simple modules over $R\ltimes\dot{\frak g}$. Since any simple finite-dimensional  module over two-dimensional Lie superalgebra $R$ is one-dimensional with $X\cdot u=au,\ Y\cdot u=0$ for some $a\in\bC$, there is the following lemma.
\begin{lemm}\cite[Corollary 3.5]{CLL}\label{lm:4.14}
Up to a parity change, any simple cuspidal $\mathcal{A}\overline{\frak s}$-module is isomorphic to some $\Gamma(\lambda, a)=\mathcal{A}\otimes \bC u$ with $\lambda, a\in\bC$
defined as follows:
\begin{align*}
&t^i\xi^r\circ(f\otimes u)=t^i\xi^rf\otimes u,\\
&L_i\circ(t^m\otimes u)=(\lambda+m+ia)t^{i+m}\otimes u,\\
 &L_i\circ(t^m\xi\otimes u)=(\lambda+m+ia+\frac{i}{2})t^{i+m}\xi\otimes u,\\
&G_i\circ(t^m\otimes u)=(\lambda+m+2ia)t^{i+m}\xi\otimes u,\\
 &G_i\circ(t^m\xi\otimes u)=-t^{i+m}\otimes u,
\end{align*}
where $i, m\in \bZ, f\in \mathcal{A}, r\in\{0,1\}$ and $|u|=\bar{0}$.
\end{lemm}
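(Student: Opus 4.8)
The plan is to recognize this statement as the $\dot{\frak g}$-free specialization of the structure theory built up in this section, and to reduce it to the classification of finite-dimensional simple $R$-modules followed by a direct computation. When one replaces $\frak g=\overline{\frak s}\ltimes(\dot{\frak g}\otimes\mathcal{A})$ by $\overline{\frak s}$ alone, the subspace $\cT$ of $\ol{\cU}$ loses its $\overline{X}_m^x,\overline{Y}_m^x$ generators and reduces to the span of $\{X_i,Y_i\mid i\in\bZ\setminus\{0\}\}$, which is isomorphic to $\fa_0=(t-1)\overline{\frak s}$; the decomposition $\ol{\cU}\cong\cK\otimes U(\cT)$ of Proposition \ref{prop:4.2} and the classification in Lemma \ref{lm:4.8} carry over verbatim. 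Consequently the analogue of Theorem \ref{th:4.13} holds: up to a parity change, any simple cuspidal $\mathcal{A}\overline{\frak s}$-module is isomorphic to $\Gamma(\lambda,V)$ for some $\lambda\in\bC$ and some finite-dimensional simple module $V$ over $R=(t-1)\overline{\frak s}/(t-1)^2\overline{\frak s}$, where Lemma \ref{lm:4.11} forces the $\fa_0$-action to factor through $\fa_0/\fa_1\cong R$. It then remains to classify finite-dimensional simple $R$-modules and to evaluate $\Gamma(\lambda,V)$ explicitly.

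First I would classify the finite-dimensional simple $R$-modules. By Lemma \ref{lm:4.12}, $R=\bC X\oplus\bC Y$ with $|X|=\bar0$, $|Y|=\bar1$ and $[X,Y]=\frac{1}{2} Y$; moreover $[Y,Y]=0$, since $[(t-1)G_0,(t-1)G_0]=-2(t-1)^2L_0\equiv0$ in $R$, so $Y$ acts as an operator with $Y^2=0$ on any module. On a simple module $V$ the subspace $YV$ is $R$-invariant, because $X(Yv)=\frac{1}{2} Yv+Y(Xv)\in YV$ and $Y(Yv)=0$; simplicity gives $YV=0$ or $YV=V$, and $Y^2=0$ rules out the latter unless $V=0$. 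Hence $YV=0$ and $V$ is a simple module over the one-dimensional even algebra $\bC X$, so $V=\bC u$ is one-dimensional with $Xu=au$, $Yu=0$, and (up to a parity change) $|u|=\bar0$.

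Finally I would substitute $V=\bC u$ into the $\dot{\frak g}$-free form of Definition \ref{def:4.7}. Under the isomorphism $\varphi$ one has $\varphi(X_i)=L_i-L_0$ and $\varphi(Y_i)=G_i-G_0$, and since $L_i-L_0=(t^i-1)L_0\equiv i(t-1)L_0=iX$ and $G_i-G_0\equiv iY$ modulo $(t-1)^2\overline{\frak s}$, the elements $L_i-L_0$ and $G_i-G_0$ act on $u$ by $ia$ and $0$ respectively. Taking $f=t^m$ and $f=t^m\xi$ and expanding the four terms of each of the $L_i$- and $G_i$-actions — using $\xi\cdot t^m\xi=0$, $t\frac{\partial}{\partial t}(t^m)=mt^m$ and $\frac{\partial}{\partial\xi}(t^m\xi)=t^m$ — collapses every expression to its single surviving term and produces exactly the stated formulas for $L_i\circ(t^m\otimes u)$, $L_i\circ(t^m\xi\otimes u)$, $G_i\circ(t^m\otimes u)$ and $G_i\circ(t^m\xi\otimes u)$.

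The only genuinely substantive step is the reduction in the first paragraph, namely verifying that the entire $\mathcal{A}$-cover formalism — Proposition \ref{prop:4.2}, Lemma \ref{lm:4.8} and Theorem \ref{th:4.13} — specializes cleanly once $\dot{\frak g}$ is removed; the $R$-module classification and the bookkeeping in the explicit computation are then routine. Alternatively, this specialization is precisely Corollary 3.5 of \cite{CLL}, so one may simply invoke it.
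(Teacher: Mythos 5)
Your proposal is correct and matches the route the paper itself takes: the paper presents this lemma as the specialization of its general machinery (Theorem \ref{th:4.13} with $\dot{\frak g}$ removed, which is precisely the setting of \cite[Corollary 3.5]{CLL}), justified by the one-line observation that every finite-dimensional simple module over the two-dimensional Lie superalgebra $R$ is one-dimensional with $X\centerdot u=au$, $Y\centerdot u=0$ --- exactly your second paragraph --- after which the explicit formulas follow from Definition \ref{def:4.7} as in your third paragraph. Your reconstruction of the $R$-module classification and the substitution computation are both accurate, so nothing further is needed.
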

\begin{lemm}\label{lm:4.15}
Any finite-dimensional simple $R\ltimes\dot{\frak g}$-module $V$ is induced by the finite-dimensional simple $\dot{\frak g}$-module $V$ with $X\centerdot v=bv, Y\centerdot v=0, \ \forall v\in V$ for some $b\in\mathbb C$.
\end{lemm}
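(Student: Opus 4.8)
The plan is to exploit the fact that $L:=R\ltimes\dot{\frak g}$ is an extremely small Lie superalgebra: its even part is $L_{\bar{0}}=\bC X\oplus\dot{\frak g}$, its odd part is $L_{\bar{1}}=\bC Y$, and the only nonzero brackets are the $\dot{\frak g}$-bracket together with $[X,Y]=\frac{1}{2}Y$. The first point to record is that $R$ centralizes $\dot{\frak g}$, i.e.\ $[X,\dot{\frak g}]=[Y,\dot{\frak g}]=0$. Indeed, under the identification $\fa_0/\fa_1\cong R\ltimes\big(\dot{\frak g}\otimes(\mathcal A/\fm\mathcal A)\big)$ from the proof of Theorem \ref{th:4.13}, we have $\mathcal A/\fm\mathcal A=\bC\bar{1}$, and every element of $\overline{\frak s}$ kills the constant $1\in\mathcal A$, since $L_i\cdot 1=G_i\cdot 1=0$ for all $i$ (each summand of the vector-field action carries a derivative). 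Hence the representatives $X,Y\in\overline{\frak s}$ annihilate $\bar{1}$, so $R$ acts trivially on $\dot{\frak g}\otimes\bar{1}$, and $L=R\oplus\dot{\frak g}$ is a direct sum of commuting ideals.

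With this in hand, the heart of the argument is to show that $Y$ annihilates the finite-dimensional simple module $V$. Two facts drive this: first, $[Y,Y]=0$ in $R$ (one checks $[(t-1)G_0,(t-1)G_0]=-2(t-1)^2L_0\equiv 0$ via Lemma \ref{lm:4.4}), which forces $Y^2=\frac{1}{2}[Y,Y]=0$ as an operator; second, $[X,Y]=\frac{1}{2}Y$ and $[\dot{\frak g},Y]=0$ give $X(Yv)=Y(Xv)+\frac{1}{2}Yv$ and $x(Yv)=Y(xv)$ for $x\in\dot{\frak g}$. Consequently $YV$ is stable under $X$, under $\dot{\frak g}$, and under $Y$ (as $Y\cdot Yv=Y^2v=0$), and it is graded because $Y$ is odd; thus $YV$ is a graded $L$-submodule. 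By simplicity $YV\in\{0,V\}$, and $YV=V$ would give $V=YV=Y^2V=0$, a contradiction. Therefore $YV=0$.

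Once $Y$ acts as $0$, the module is really a module over the reductive Lie algebra $L_{\bar{0}}=\bC X\oplus\dot{\frak g}$, and the two submodule lattices coincide, since any graded $L_{\bar{0}}$-submodule is automatically $Y$-invariant. Hence $V$ is a finite-dimensional simple $L_{\bar{0}}$-module. Because $X$ is central in $L_{\bar{0}}$, Schur's Lemma forces $X$ to act by a scalar $b\in\bC$; with $X$ scalar the $\dot{\frak g}$-submodules of $V$ are exactly its $L_{\bar{0}}$-submodules, so $V$ is a simple $\dot{\frak g}$-module. This is precisely the asserted description: $V$ is a finite-dimensional simple $\dot{\frak g}$-module with $X\centerdot v=bv$ and $Y\centerdot v=0$. (One may also note that, $Y$ acting as $0$, the parts $V_{\bar{0}}$ and $V_{\bar{1}}$ are themselves $L$-submodules, so $V$ is concentrated in a single parity.)

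The step I expect to require the most care is not the Schur's-Lemma conclusion, which is routine, but the preliminary verification that $R$ centralizes $\dot{\frak g}$, together with the bookkeeping that makes $YV$ a genuine graded submodule; keeping the $\bZ_2$-grading correct at each stage—so that ``submodule'' always means graded submodule—is where a careless argument could slip. Everything else is a short formal computation resting on $Y^2=0$ and $[X,Y]=\frac{1}{2}Y$.
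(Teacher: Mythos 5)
Your argument is correct and takes essentially the same route as the paper: both reduce to showing that the odd element $Y$ annihilates $V$ (the paper uses the submodule $\{v\in V\mid Y\centerdot v=0\}$, you use the submodule $YV$ together with $Y^2=0$ --- the same simplicity argument in dual form), and both then conclude via $[X,\dot{\frak g}]=0$ and Schur's Lemma that $X$ acts by a scalar. Your extra verifications (that $R$ centralizes $\dot{\frak g}$ modulo $\fa_1$, and that gradings are preserved) are consistent with Lemma \ref{lm:4.4} and fill in details the paper leaves implicit; there is no gap.
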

\begin{proof}
Let $V$ be a finite-dimensional simple $R\ltimes\dot{\frak g}$-module and $W=\{v\in V\mid Y\centerdot v=0\}$. We have $V=W$ because $V$ is simple and $W$ is a nontrivial submodule of $V$. Since $X=(t-1)L_0$, we get $[X, \dot{\frak g}]=0$ from Lemma \ref{lm:4.4}. So $X\centerdot v=bv,\ \forall v\in V$ for some $b\in\mathbb C$ by Schur's Lemma.
\end{proof}
For a finite-dimensional simple $\dot{\frak g}$-module $V$, from Lemma \ref{lm:4.15}, there exists $b\in\bC$ such that
$$(L_i-L_0)\centerdot v=\big((L_i-L_{i-1})+\ldots+(L_1-L_0)\big)\centerdot v=iX\centerdot v=ibv, \ \forall v\in V.$$
Similarly, we get $(G_i-G_0)\centerdot v=iY\centerdot v=0$. Therefore, the $\mathcal{A}\frak g$-module $\mathcal{A}\otimes V$ in Definition \ref{def:4.7} can be expressed as $\Gamma(\lambda, V, b)$ with actions
\begin{align*}
&t^i\xi^r\circ(f\otimes v)=t^i\xi^rf\otimes v,\\
&L_i\circ(t^m\xi^r\otimes v)=(\lambda+m+ib+\frac{i}{2}\delta_{\bar{1},\bar{r}})t^{i+m}\xi^r\otimes v,\\
&G_i\circ(t^m\otimes v)=(\lambda+m+2ib)t^{i+m}\xi\otimes v,\\
&G_i\circ(t^m\xi\otimes v)=-t^{i+m}\otimes v,\\
&(x\otimes t^i)\circ(f\otimes v)=t^if\otimes (x\centerdot v),\\
&(x\otimes t^i\xi)\circ(f\otimes v)=t^i\xi f\otimes (x\centerdot v),
\end{align*}
where $i, m\in\bZ, f\in \mathcal{A}, r\in\{0,1\}$ and $v\in V$.

Below we can present the main conclusion of this section.
\begin{theo}\label{th:4.16}
Up to a parity change, any simple cuspidal $\frak g$-module is isomorphic to a simple quotient of a tensor module $\Gamma(\lambda, V, b)$
for some finite-dimensional simple $\dot{\frak g}$-module $V$ and $\lambda, b\in\bC$.
\end{theo}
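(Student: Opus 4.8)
The plan is to classify simple cuspidal $\frak g$-modules by bootstrapping from the simple cuspidal $\mathcal{A}\frak g$-modules already described in Theorem \ref{th:4.13}, using the $\mathcal{A}$-cover of Section 3. Let $M$ be a simple cuspidal $\frak g$-module. Since $I=\dot{\frak g}\otimes\mathcal{A}$ is an ideal of $\frak g$, the subspace $IM$ is a $\frak g$-submodule, so either $IM=0$ or $IM=M$. First I would dispose of the degenerate case $IM=0$: then $M$ is a simple cuspidal module over $\overline{\frak s}=\frak g/I$, and by the classification of simple Harish-Chandra modules over the Ramond algebra (\cite{CL}) it is, up to a parity change, a simple quotient of some $\Gamma(\lambda,a)$; but this is exactly $\Gamma(\lambda,V,b)$ with $V$ the trivial one-dimensional $\dot{\frak g}$-module and $b=a$, on which $I$ acts by zero. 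Hence I may assume $IM=M$ and pass to the $\mathcal{A}$-cover $\widehat{M}=(I\otimes M)/K(M)$.

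By Proposition \ref{prop:3.5}, $\widehat{M}$ is a cuspidal $\mathcal{A}\frak g$-module, and it is nonzero since the $\frak g$-module epimorphism $\pi:\widehat{M}\to IM=M$ surjects onto $M\neq 0$. The decisive step is to show that $\ker\pi$ contains no nonzero $\mathcal{A}\frak g$-submodule. Indeed, if $N\subseteq\ker\pi$ is an $\mathcal{A}$-submodule and $\sum_i w_i\otimes v_i+K(M)\in N$, then for every $f\in\mathcal{A}$ the element $f\cdot\big(\sum_i w_i\otimes v_i\big)+K(M)=\sum_i(fw_i)\otimes v_i+K(M)$ again lies in $N\subseteq\ker\pi$, so $\sum_i(fw_i)v_i=0$ for all $f$; by the definition of $K(M)$ this forces $\sum_i w_i\otimes v_i\in K(M)$, i.e. the chosen element is already zero in $\widehat{M}$. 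Thus $N=0$. The mechanism here is that $\pi$ is only a $\frak g$-map, not an $\mathcal{A}\frak g$-map, and it is precisely the freedom to act by all of $\mathcal{A}$ inside $\widehat{M}$ that collapses any submodule sitting in $\ker\pi$.

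To finish, I would invoke that a cuspidal module has finite length, so the nonzero cuspidal $\mathcal{A}\frak g$-module $\widehat{M}$ contains a simple $\mathcal{A}\frak g$-submodule $S$. Since $\ker\pi$ contains no nonzero submodule, $S\not\subseteq\ker\pi$, so $\pi(S)$ is a nonzero $\frak g$-submodule of the simple module $M$; hence $\pi(S)=M$ and $\pi|_S:S\to M$ is a $\frak g$-module epimorphism. As a submodule of the cuspidal $\widehat{M}$, the module $S$ is a simple cuspidal $\mathcal{A}\frak g$-module, so by Theorem \ref{th:4.13} together with Lemma \ref{lm:4.15} and the rewriting of $\Gamma(\lambda,V)$ as $\Gamma(\lambda,V,b)$, up to a parity change $S\cong\Gamma(\lambda,V,b)$ for some finite-dimensional simple $\dot{\frak g}$-module $V$ and $\lambda,b\in\bC$. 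Therefore $M\cong S/\ker(\pi|_S)$ is, up to a parity change, a simple quotient of $\Gamma(\lambda,V,b)$, as claimed. The hard part is the kernel computation of the second paragraph: it is the engine that converts the $\mathcal{A}$-cover into a genuine simple $\mathcal{A}\frak g$-module mapping onto $M$, and it tacitly relies on the finite-length property of cuspidal modules to produce the simple submodule $S$.
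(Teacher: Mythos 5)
Your proof is correct and rests on the same machinery as the paper's (the $\mathcal{A}$-cover, Proposition \ref{prop:3.5}, and Theorem \ref{th:4.13}), but the step that extracts a simple $\mathcal{A}\frak g$-module surjecting onto $M$ is genuinely different. The paper takes a composition series $0=\widehat{M}^{(1)}\subset\cdots\subset\widehat{M}^{(s)}=\widehat{M}$ of $\mathcal{A}\frak g$-submodules, lets $l$ be minimal with $\pi(\widehat{M}^{(l)})\neq 0$, and uses the simplicity of $M$ to get $\pi(\widehat{M}^{(l)})=M$ and $\pi(\widehat{M}^{(l-1)})=0$, so that $M$ is a quotient of the simple \emph{subquotient} $\widehat{M}^{(l)}/\widehat{M}^{(l-1)}$. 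You instead prove a lemma the paper never states, namely that $\ker\pi$ contains no nonzero $\mathcal{A}$-submodule --- your computation that $\sum_i(fw_i)v_i=0$ for all $f\in\mathcal{A}$ forces $\sum_i w_i\otimes v_i\in K(M)$ is exactly right --- and then surject a simple \emph{submodule} $S\subseteq\widehat{M}$ onto $M$. Your route isolates the structural reason the cover works and yields slightly more (every simple submodule of $\widehat{M}$ maps onto $M$); both routes lean equally on the unproved assertion that the cuspidal module $\widehat{M}$ has finite length, though you only need a nonzero socle where the paper needs a full composition series. Your absorption of the trivial module into the $IM=0$ case via $\Gamma(\lambda,a)=\Gamma(\lambda,\bC,a)$ is fine and matches the paper's treatment. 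One cosmetic slip: the classification of simple cuspidal $\overline{\frak s}$-modules in the Ramond case is in \cite{CLL}, not \cite{CL}.
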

\begin{proof}
Let $M$ be a simple cuspidal $\frak g$-module. If $M$ is a trival $\frak g$-module, then $M$ is a simple
quotient of the simple cuspidal $\mathcal{A}\frak g$-module $\mathcal{A}\otimes\bC$ with $\bC$ a trivial $\dot{\frak g}$-module. Now suppose $M$ is a nontrivial simple cuspidal $\frak g$-module.

If $IM=0$, then $M$ is a simple cuspidal $\overline{\frak s}$-module. Hence, $M$ is a simple quotient of a simple cuspidal $\mathcal{A}\overline{\frak s}$-module
$\Gamma(\lambda, a)$ by Theorem 3.9 in \cite{CLL}. Since $\frak g=\overline {\frak s}\ltimes I$, any $\mathcal{A}\overline{\frak s}$-module is naturally an  $\mathcal{A}\frak g$-module with trivial $I$-action.

If $IM\neq 0$, then $IM=M$ since $M$ is simple. So there is an epimorphism $\pi: \widehat{M}\rightarrow M$. From Proposition \ref{prop:3.5}, $\widehat{M}$ is a cuspidal $\mathcal{A}\frak g$-module. Consider the composition series of $\mathcal{A}\frak g$-submodules in $\widehat{M}$
$$0=\widehat{M}^{(1)}\subset \widehat{M}^{(2)}\subset\cdots \subset \widehat{M}^{(s)}=\widehat{M}$$
with the quotients $\widehat{M}^{(i)}/\widehat{M}^{(i-1)}$ being simple cuspidal $\mathcal{A}\frak g$-modules. Let $l$ be the
minimal integer such that $\pi(\widehat{M}^{(l)})\ne 0$.  Since $M$ is simple $\frak g$-module, we have $\pi(\widehat{M}^{(l)})=M$
and $\pi(\widehat{M}^{(l-1)})=0$. This induces an epimorphism of $\frak g$ modules from $\widehat{M}^{(l)}/\widehat{M}^{(l-1)}$ to $M$.
Hence, $M$ is isomorphic to a simple quotient of cuspidal $\mathcal{A}\frak g$-module $\widehat{M}^{(l)}/\widehat{M}^{(l-1)}$.
We can complete the proof by Theorem \ref{th:4.13}.
\end{proof}

\section{Main result}
Through the previous study, it remains to classify all simple Harish-Chandra modules over $\widehat{\frak g}$ which is not cuspidal.
Let $\{x_s\mid s=1,2,\cdots, l\}$ be a basis of Lie algebra $\dot{\frak g}$. Then $\mbox{dim}\,\widehat{\frak g}_n=2+2l$ for $n\in\mathbb{Z}$
and $n\neq 0$.
The following result is well known.
\begin{lemm}\cite[Lemma 1.6]{Mu}\label{lm:5.1}
Let $V$ be a weight module with finite-dimensional weight spaces for the Virasoro algebra with $\mbox{supp}(V)\subseteq \lambda+\mathbb{Z}$.
If for any $v\in V$, there exists $N(v)\in \mathbb{N}$ such that $L_iv=0, \forall i\geq N(v)$, then $\mbox{supp}(V)$ is upper bounded.
\end{lemm}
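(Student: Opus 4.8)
The plan is to argue by contradiction: assume $\mathrm{supp}(V)$ is not bounded above and produce a vector violating the hypothesis. Throughout I use that, by the bracket $[L_i,L_j]=(j-i)L_{i+j}+\cdots$, the operator $L_i$ shifts the $L_0$-eigenvalue by $i$; in particular the modes $L_i$ with $i\ge 1$ raise weights. Since every weight space is finite-dimensional, for each weight $\gamma\in\mathrm{supp}(V)$ there is an integer $N_\gamma$ with $L_iV_\gamma=0$ for all $i\ge N_\gamma$ (take the maximum of the thresholds $N(v)$ over a basis of $V_\gamma$). I fix once and for all a reference weight $\mu\ne 0$ in $\mathrm{supp}(V)$ together with its threshold $N_\mu$.

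The key step, which tames the a priori uncontrolled dependence of $N(v)$ on the weight of $v$, is a propagation lemma: if $L_iv=0$ for all $i\ge N$, then $L_i$ annihilates the whole space $U(\mathfrak{n}_+)v$ for all $i\ge N$, where $\mathfrak{n}_+=\langle L_1,L_2,\dots\rangle$. This follows by induction from $L_iL_jv=L_jL_iv+(j-i)L_{i+j}v$: for $j\ge 1$ and $i\ge N$ both $L_iv$ and $L_{i+j}v$ vanish, so the vanishing threshold is preserved under every raising operator. Consequently all vectors obtained from a fixed $v$ by applying positive modes share the single threshold $N(v)$.

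With this in hand I build a tower. I choose $w_0\ne 0$ of weight $\nu_0>0$ with $\nu_0-\mu\ge N_\mu$ (possible since the support is unbounded above), and set $M=N(w_0)$. Define $w_{k+1}=L_{\nu_k-\mu}w_k$, so that $w_k$ has weight $\nu_k=\mu+2^{k}(\nu_0-\mu)$. By the propagation lemma every $w_k$ lies in $U(\mathfrak{n}_+)w_0$ and hence is annihilated by $L_i$ for $i\ge M$; since the raising mode $\nu_k-\mu=2^{k}(\nu_0-\mu)$ eventually exceeds $M$, the tower must terminate at some nonzero $w_K$ with $L_qw_K=0$, where $q=\nu_K-\mu\ge N_\mu$. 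I then lower this terminal vector into the reference space: $L_{-q}w_K\in V_\mu$, while the relation $[L_q,L_{-q}]=-2qL_0+\tfrac{1}{12}(q^3-q)C$ gives
\[
L_q\big(L_{-q}w_K\big)=\Big(-2q\nu_K+\tfrac{c}{12}(q^3-q)\Big)w_K,
\]
where $c$ is the scalar by which $C$ acts. The coefficient is a nonzero polynomial in $q$ (it is $-2q\nu_K\ne 0$ when $c=0$, and a genuine cubic otherwise), so it vanishes only for finitely many weights, which I avoid by choosing $\nu_0$ (equivalently $\mu$) generically among the infinitely many weights available in the unbounded support. Hence $L_q(L_{-q}w_K)$ is a nonzero multiple of $w_K$, contradicting $L_qV_\mu=0$ since $L_{-q}w_K\in V_\mu$ and $q\ge N_\mu$.

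I expect the genuine obstacle to be precisely the weight-dependence of the vanishing index $N(v)$: a naive descent from a high weight down to a fixed weight space fails because the threshold could in principle grow without bound as the weight decreases. The propagation lemma of the second paragraph is exactly what removes this difficulty, by exhibiting an entire family of arbitrarily-high-weight vectors all governed by one threshold $M$; the contribution of the central charge is only a secondary, generically-vanishing nuisance handled by the choice of reference weight.
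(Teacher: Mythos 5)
The paper offers no proof of this statement at all---it is imported verbatim from Mathieu \cite[Lemma 1.6]{Mu}---so your argument can only be judged on its own terms, as a self-contained replacement for the citation. Most of it holds up: the propagation lemma is correct (since $N(v)\in\mathbb{N}$ and $j\ge 1$, the index $i+j$ is always at least $2$, so the central term never appears in $L_iL_jv=L_jL_iv+(j-i)L_{i+j}v$), the doubling tower genuinely terminates at a nonzero $w_K$ with $L_qw_K=0$ and $q=\nu_K-\mu=2^K(\nu_0-\mu)\ge N_\mu$, and the identity $L_q(L_{-q}w_K)=\bigl(-2q\nu_K+\tfrac{c}{12}(q^3-q)\bigr)w_K$ is the right computation.

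The one step that does not work as written is the disposal of the coefficient: you propose to avoid its zero set ``by choosing $\nu_0$ (equivalently $\mu$) generically''. The zero set consists of at most three values of $q$, but $q=2^K(\nu_0-\mu)$ depends on the termination index $K$, which is dictated by the module and not by you; genericity of $\nu_0$ therefore gives no control over where $q$ lands, and varying $\mu$ changes both $N_\mu$ and the polynomial, so the parenthetical remark is not an equivalent move. Fortunately the repair is one line: write the coefficient as $q\bigl(\tfrac{c}{12}(q^2-1)-2(\mu+q)\bigr)$; the second factor is a nonconstant polynomial in $q$ (of degree $2$ if $c\ne 0$, of degree $1$ with leading coefficient $-2$ if $c=0$), hence nonzero for all $q$ exceeding some bound $Q(\mu,c)$. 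Since $q\ge\nu_0-\mu$ no matter what $K$ turns out to be, it suffices to pick $\nu_0$ with $\nu_0-\mu\ge\max\bigl(N_\mu,Q(\mu,c)\bigr)$, which the assumed unboundedness of the support permits. With ``generically'' replaced by ``sufficiently large'' in this sense, your proof is complete.
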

\begin{lemm}\label{lm:5.2}
Suppose $M$ is a simple Harish-Chandra module over $\widehat{\frak g}$ which is not cuspidal, then $M$ is a highest (or lowest) weight module.
\end{lemm}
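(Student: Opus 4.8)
\section*{Proof proposal for Lemma~\ref{lm:5.2}}

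The plan is to reduce everything to the boundedness of $\supp(M)$ via Lemma~\ref{lm:5.1}, the dividing line being the two subspaces
$$M^{+}=\{v\in M\mid \widehat{\frak g}_{i}v=0\ \text{for all}\ i\ge N,\ \text{some}\ N=N(v)\},\qquad M^{-}=\{v\in M\mid \widehat{\frak g}_{-i}v=0\ \text{for all}\ i\ge N\}.$$
First I would verify that $M^{+}$ is a $\widehat{\frak g}$-submodule: if $\widehat{\frak g}_{i}v=0$ for $i\ge N$ and $w\in\widehat{\frak g}_{k}$, then for every homogeneous $u\in\widehat{\frak g}_{i}$ with $i\ge\max(N,N-k)$ one has $u(wv)=[u,w]v\pm w(uv)=0$ (the sign dictated by parity), since $[u,w]\in\widehat{\frak g}_{i+k}$ with $i+k\ge N$ kills $v$ and $uv=0$; hence $wv\in M^{+}$. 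The same bracket computation handles $M^{-}$. By simplicity each of $M^{+},M^{-}$ is either $0$ or all of $M$. Note also that if $\supp(M)$ is bounded above with maximal weight $\mu$, then $\widehat{\frak g}_{\ge 1}M_{\mu}=0$, so $M_{\mu}\subseteq M^{+}$ and $M^{+}\neq 0$; dually for $M^{-}$.

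The two nonzero cases are then formal. Suppose $M^{+}=M$. Every vector is killed by $L_{i}$ for $i\gg 0$, so Lemma~\ref{lm:5.1} (using that $M$ is Harish-Chandra with $\supp(M)\subseteq\lambda+\bZ$) shows $\supp(M)$ is bounded above. Let $\mu$ be a maximal weight; since $M_{\mu}$ is a finite-dimensional $\dot{\frak g}$-module I can pick a $\dot{\frak g}$-highest weight vector $v_{0}\in M_{\mu}$, and then $(\widehat{\frak g}_{+}\oplus\dot{\frak g}_{+})v_{0}=0$ while $v_{0}$ is a weight vector for $\widehat{\fh}$ ($C,C_{1}$ acting by scalars as usual). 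By simplicity $M=U(\widehat{\frak g})v_{0}=U(\widehat{\frak g}_{-})v_{0}$, i.e. $M$ is a highest weight module; the case $M^{-}=M$ gives a lowest weight module symmetrically.

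It remains to treat $M^{+}=M^{-}=0$, and this is where the difficulty is concentrated: I must derive a contradiction by showing that such an $M$ is necessarily cuspidal. By the previous paragraph $M^{\pm}=0$ forces $\supp(M)$ to be unbounded in both directions, while the failure of $M^{\pm}=M$ means that neither the positive nor the negative Virasoro operators act locally nilpotently. The natural attack is a Mathieu-type bounding argument: for each $n\ge 1$ the triple $\{L_{n},L_{0},L_{-n}\}$ spans, modulo the scalar action of $C$, a copy of $\mathfrak{sl}_{2}$ acting on the finite-dimensional weight spaces of $M$, and the absence of local nilpotency of both $L_{n}$ and $L_{-n}$ should force $\lambda\mapsto\dim M_{\lambda}$ to be uniformly bounded, contradicting the hypothesis that $M$ is not cuspidal. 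Quantifying this is exactly the hard core of the proof; an alternative I would try is to restrict $M$ to the super-Virasoro subalgebra $\overline{\frak s}$ and combine the classification of its Harish-Chandra modules from \cite{CL} with the current-algebra relations to exclude unbounded multiplicities. In either approach the entire weight of the argument sits in this last step, the first two cases being immediate consequences of Lemma~\ref{lm:5.1}.
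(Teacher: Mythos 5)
Your decomposition into $M^{+}$, $M^{-}$ and the residual case is sound as far as it goes, and the two cases $M^{\pm}=M$ are handled correctly (they are exactly how the paper concludes once a suitable vector has been produced). But the entire content of the lemma sits in the step you leave open: ruling out $M^{+}=M^{-}=0$ for a non-cuspidal $M$. You explicitly defer this (``quantifying this is exactly the hard core of the proof'') and only gesture at a Mathieu-type $\mathfrak{sl}_2$ argument or a restriction to $\overline{\frak s}$, neither of which is carried out; the restriction idea in particular does not apply as stated, since $M$ restricted to $\overline{\frak s}$ need not be simple, so the classification of simple Harish-Chandra $\overline{\frak s}$-modules cannot simply be quoted. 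As written, the proposal therefore has a genuine gap at the decisive point.

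The idea you are missing is an elementary dimension count that converts non-cuspidality directly into a nonzero vector of $M^{+}$ (or of $M^{-}$). Since $M$ is not cuspidal, for a fixed $\lambda\in\mathrm{supp}(M)$ one can choose $k$ with $\dim M_{\lambda-k}>(2+2l)\dim M_{\lambda}+2\dim M_{\lambda+1}$, where $l=\dim\dot{\frak g}$; assuming $k>0$ (otherwise one argues symmetrically towards $M^{-}$), the linear map sending $v\in M_{\lambda-k}$ to the tuple $\bigl(L_{k}v,\,G_{k}v,\,(x_{s}\otimes t^{k})v,\,(x_{s}\otimes t^{k}\xi)v,\,L_{k+1}v,\,G_{k+1}v\bigr)$ lands in a space of strictly smaller dimension and hence has a nonzero kernel vector $\omega$. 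Because $[\widehat{\frak g}_{i},\widehat{\frak g}_{j}]=\widehat{\frak g}_{i+j}$ for $j\neq 0$ and $\gcd(k,k+1)=1$, the spaces $\widehat{\frak g}_{k}$ and $\widehat{\frak g}_{k+1}$ generate every $\widehat{\frak g}_{p}$ with $p\geq k^{2}$, so $\widehat{\frak g}_{p}\omega=0$ for all such $p$ and $\omega\in M^{+}$. Simplicity then forces $M^{+}=M$, and your first case finishes the proof via Lemma \ref{lm:5.1}. With this insertion your outline closes; without it the proof is incomplete.
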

\begin{proof}
For a fixed $\lambda\in\mbox{supp}(M)$, there is a $k\in\mathbb{Z}$ such that $\mbox{dim}M_{\lambda-k}>(2+2l)\mbox{dim}M_\lambda+2\mbox{dim}M_{\lambda+1}$
since $M$ is not cuspidal. Without loss of generality, we may assume that $k\in \mathbb{N}$. Then there exists a nonzero element $\omega\in M_{\lambda-k}$
such that
$$L_k\omega=L_{k+1}\omega=G_k\omega=G_{k+1}\omega=0$$
and
$$(x_s\otimes t^k)\omega=(x_s\otimes t^k\xi)\omega=0,$$
where $s\in\{1,2,\cdots,l\}$. Therefore, we get $L_p\omega=G_p\omega=(x_s\otimes t^p)\omega=(x_s\otimes t^p\xi)\omega=0$ for all $p\geq k^2$
since $[\widehat{\frak g}_i, \widehat{\frak g}_j]=\widehat{\frak g}_{i+j}$ for $j\neq 0$.

It is easy to see that $M'=\{v\in M\mid\mbox{dim}\ \widehat{\frak g}_+v<\infty\}$ is a nonzero submodule of $M$. So $M=M'$ by the simplicity of $M$.
Since $M$ is a weight module over the Virasoro algebra, we have $\mbox{supp}(M)$ is upper bounded by Lemma \ref{lm:5.1}, that is $M$ is a highest weight module.
\end{proof}
To sum up, we have the following result.
\begin{theo}$(${\bf Main theorem}$)$\label{main}
Let $M$ be a simple Harish-Chandra module over the superconformal current algebra $\widehat{\frak g}$.
Then $M$ is a highest weight module, a lowest weight module, or a simple quotient of a tensor module $\Gamma(\lambda, V, b)$ or $\Pi(\Gamma(\lambda, V, b))$
for some finite-dimensional simple $\dot{\frak g}$-module $V$ and $\lambda, b\in\bC$.
\end{theo}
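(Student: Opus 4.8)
The plan is to reduce the Main Theorem to results already in hand by splitting a simple Harish-Chandra module $M$ over $\widehat{\frak g}$ into two mutually exclusive cases according to whether or not $M$ is cuspidal. By definition a Harish-Chandra module has finite-dimensional weight spaces, so either there is a uniform bound on these dimensions (the cuspidal case) or there is not.

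First I would dispose of the non-cuspidal case, which is immediate. If $M$ is not cuspidal, then Lemma \ref{lm:5.2} applies verbatim and shows that $M$ is a highest weight module or a lowest weight module. This accounts for the first two alternatives in the statement, with no further work required.

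Next I would treat the cuspidal case, where the essential point is to descend from $\widehat{\frak g}$ to $\frak g$. By Schur's Lemma the central elements $C$ and $C_1$ act on the simple module $M$ by scalars, and Lemma \ref{lm:3.1} forces both scalars to vanish on a simple cuspidal module. Hence the $\widehat{\frak g}$-action on $M$ factors through the quotient $\widehat{\frak g}/(\bC C\oplus\bC C_1)\cong\frak g$, so that $M$ becomes a simple cuspidal $\frak g$-module (simplicity and cuspidality are preserved because the quotient map $\widehat{\frak g}\to\frak g$ induces the same $L_0$-weight decomposition). Theorem \ref{th:4.16} then asserts that, up to a parity change, $M$ is isomorphic to a simple quotient of a tensor module $\Gamma(\lambda, V, b)$ for some finite-dimensional simple $\dot{\frak g}$-module $V$ and some $\lambda, b\in\bC$. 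The qualifier \emph{up to a parity change} is exactly what produces the two possibilities $\Gamma(\lambda, V, b)$ and $\Pi(\Gamma(\lambda, V, b))$ appearing in the statement.

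Combining the two cases yields the complete list of alternatives. I do not expect any serious computational obstacle at this final stage, since the classification of simple cuspidal $\frak g$-modules (Theorem \ref{th:4.16}) and the highest/lowest-weight dichotomy for the non-cuspidal case (Lemma \ref{lm:5.2}) have already done all of the heavy lifting. The only point that genuinely requires care is the bridge in the cuspidal case, namely invoking Lemma \ref{lm:3.1} to justify that the vanishing central charges let one legitimately regard $M$ as a $\frak g$-module and then feed it into Theorem \ref{th:4.16} without losing either simplicity or cuspidality.
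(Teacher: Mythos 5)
Your proposal is correct and follows exactly the route the paper intends: the paper states the Main Theorem with only the phrase ``To sum up,'' meaning precisely the combination of Lemma \ref{lm:5.2} for the non-cuspidal case with Lemma \ref{lm:3.1} and Theorem \ref{th:4.16} for the cuspidal case. Your added remark on why simplicity and cuspidality pass through the quotient $\widehat{\frak g}\to\frak g$ is a small but welcome clarification of a step the paper leaves implicit.
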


\section{Application to the $N=1$ Heisenberg-Virasoro algebra}

As an application, we can directly get the classification results of the simple Harish-Chandra modules over the $N=1$ Heisenberg-Virasoro algebra.
\begin{defi}\cite{AJR}
The $N=1$ Heisenberg-Virasoro algebra  $\mathcal {S}$ is a Lie superalgebra with basis $\{L_i, H_i, G_i, Q_i,
C_L, C_{L,\alpha}, C_\alpha\mid i\in \mathbb{Z}\}$ and brackets
\begin{align*}
&[L_i,L_j]=(j-i)L_{i+j}+\delta_{i+j,0}\frac{1}{12}(i^3-i)C_L, \\
&[L_i,H_j]=jH_{i+j}+\delta_{i+j,0}(i^2+i)C_{L,\alpha},\\
&[L_i,G_j]=(j-\frac{i}{2})G_{i+j},\ \ [L_i,Q_j]=(j+\frac{i}{2})Q_{i+j},\\
&[H_i, H_j]=i\delta_{i+j,0}C_\alpha,\\
&[G_i,H_j]=-jQ_{i+j}, \ \ [H_i, Q_j]=0,\\
&[G_i,Q_j]=H_{i+j}+(2i-1)\delta_{i+j,0}C_{L,\alpha},\\
&[G_i,G_j]=-2L_{i+j}+\delta_{i+j,0}\frac{1}{12}(4i^2-1)C_L,\\
&[Q_i,Q_j]=\delta_{i+j,0}C_\alpha,
\end{align*}
where $\{L_i, H_i\mid i\in\bZ\}$ are the even generators, $\{G_i, Q_i\mid i\in\bZ\}$ are odd generators and $C_L, C_{L,\alpha}, C_\alpha$ are central elements.
\end{defi}

By direct calculation we can get that $\mathcal {S}$ is the universal central extension of $\overline{\mathcal {S}}=\mbox{span}\{L_i, H_i, G_i, Q_i\mid i\in\bZ\}$. Let $M$ be a simple cuspidal $\mathcal {S}$-module. Similar to Example 6.1 in \cite{CLW}, we have that $C_LM=C_{L,\alpha}M=C_\alpha M=0$. Thus, the classification of simple cuspidal $\mathcal {S}$-modules is equivalent to the classification of simple cuspidal $\overline{\mathcal {S}}$-modules.

Now, let $H_i=Z\otimes t^i$ and $Q_i=-Z\otimes t^i\xi$. It is easy to see that $\overline{\mathcal {S}}$ is a special case of
$\frak g=\overline{\frak s}\ltimes(\dot{\frak g}\otimes \mathcal{A})$ by taking $\dot{\frak g}=\bC Z$. Therefore, we have $R\ltimes\bC Z$ is a three-dimensional
Lie superalgebra with $(R\ltimes\bC Z)_{\bar{0}}=\mbox{span}\{X,Z\}$, $(R\ltimes\bC Z)_{\bar{1}}=\mbox{span}\{Y\}$ and the following nonzero brackets
$$[X,Y]=\frac{1}{2}Y$$
from Lemma \ref{lm:4.12}. We have the following Lemma.
\begin{lemm}
Any finite-dimensional simple module over the Lie superalgebra $R\ltimes\bC Z$ is one-dimensional with
$$X\centerdot v=bv,\ Y\centerdot v=0,\ Z\centerdot v=dv$$
for some $b,d\in\bC$.
\end{lemm}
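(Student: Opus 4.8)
The plan is to mirror the argument used for Lemma \ref{lm:4.15}, exploiting that the odd part of $R\ltimes\bC Z$ is one-dimensional and spanned by the single odd element $Y$. Let $V$ be a finite-dimensional simple $R\ltimes\bC Z$-module, and set $W=\{v\in V\mid Y\centerdot v=0\}$. First I would check that $W$ is nonzero: since $Y$ is odd and $[Y,Y]=0$ in this superalgebra, the operator by which $Y$ acts satisfies $Y^2=\frac{1}{2}[Y,Y]=0$, so $Y$ acts nilpotently on the finite-dimensional space $V$ and therefore has nonzero kernel, i.e. $W\neq 0$.

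Next I would show that $W$ is a submodule by verifying that $X$ and $Z$ preserve $\Ker Y$. Using the module identity $a(bv)=[a,b]v+(-1)^{|a||b|}b(av)$ together with the super-antisymmetry $[Y,X]=-[X,Y]=-\frac{1}{2}Y$ and $[Y,Z]=0$, for $v\in W$ one gets $Y(X\centerdot v)=-\frac{1}{2}Y\centerdot v+X(Y\centerdot v)=0$ and $Y(Z\centerdot v)=Z(Y\centerdot v)=0$, so $X\centerdot v,\ Z\centerdot v\in W$. Since $Y\centerdot v=0\in W$ trivially, $W$ is a submodule, and simplicity of $V$ forces $W=V$; that is, $Y$ acts as zero on $V$.

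Once $Y$ acts as zero, $V$ becomes a finite-dimensional simple module over the purely even, abelian Lie algebra $\bC X\oplus\bC Z$ (recall $[X,Z]=0$). The final step is the standard fact that a finite-dimensional simple module over an abelian Lie algebra is one-dimensional: $X$ and $Z$ act as commuting operators and hence admit a common eigenvector $v$, whose span is a nonzero submodule, so $V=\bC v$ by simplicity. This gives $X\centerdot v=bv$ and $Z\centerdot v=dv$ for some $b,d\in\bC$, together with $Y\centerdot v=0$, as required. I do not anticipate any genuine obstacle: each step transcribes the argument already used for $R\ltimes\dot{\frak g}$ in Lemma \ref{lm:4.15}, the only difference being that $\bC Z$ is abelian rather than simple, so the scalar action of $Z$ is pinned down directly by Schur's Lemma instead of by the representation theory of a simple Lie algebra. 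Thus the lemma is essentially an immediate specialization of Lemma \ref{lm:4.15}.
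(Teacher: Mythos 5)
Your proof is correct and follows essentially the same route the paper intends: the paper states this lemma without proof, relying on the argument of Lemma \ref{lm:4.15} (take $W=\{v\in V\mid Y\centerdot v=0\}$, show it is a nonzero submodule, conclude $Y$ acts trivially, then apply Schur's Lemma to the commuting even elements), which is exactly what you transcribe, with the added and correct observations that $Y^{2}=\frac{1}{2}[Y,Y]=0$ forces $W\neq 0$ and that $\bC X\oplus\bC Z$ being abelian forces $\dim V=1$. The only cosmetic point is that the common eigenvector of $X$ and $Z$ should be chosen homogeneous (possible since $X,Z$ are even and preserve each parity component) so that its span is a graded submodule.
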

Hence, any simple cuspidal $\mathcal {A}\overline{\mathcal {S}}$-module is isomorphic to some $\Gamma(\lambda,b,d)=\mathcal {A}\otimes \bC v$ with
$\lambda,b,d\in\bC$ up to a parity change from Theorem \ref{th:4.13}. The actions of $\mathcal {A}$ and $\overline{\mathcal {S}}$ on $\Gamma(\lambda,b,d)$ are as follows:
\begin{align*}
&t^i\xi^r\circ(f\otimes v)=t^i\xi^rf\otimes v,\\
&L_i\circ(t^m\otimes v)=(\lambda+m+ib)t^{i+m}\otimes v,\\
 &L_i\circ(t^m\xi\otimes v)=(\lambda+m+ib+\frac{i}{2})t^{i+m}\xi\otimes v,\\
&G_i\circ(t^m\otimes v)=(\lambda+m+2ib)t^{i+m}\xi\otimes v,\\
& G_i\circ(t^m\xi\otimes v)=-t^{i+m}\otimes v,\\
&H_i\circ (t^m\otimes v)=dt^{i+m}\otimes v,\\
& H_i\circ (t^m\xi\otimes v)=dt^{i+m}\xi\otimes v,\\
&Q_i\circ (t^m\otimes v)=-dt^{i+m}\xi\otimes v,\\
& Q_i\circ (t^m\xi\otimes v)=0,
\end{align*}
where $i, m\in \bZ, f\in \mathcal{A}, r\in\{0,1\}$ and $|v|=\bar{0}$.

Furthermore, from Theorem \ref{th:4.16}, any simple cuspidal $\overline{\mathcal {S}}$-module is isomorphic to a simple quotient of tensor module $\Gamma(\lambda,b,d)=\mathcal {A}\otimes \bC v$ with $\lambda,b,d\in\bC$. It is straightforward to check that as $\overline{\mathcal {S}}$-module, $\Gamma(\lambda,b,d)$ has a unique
nontrivial sub-quotient which we denote by $\Gamma'(\lambda,b,d)$. More precisely, we have the following lemma.
\begin{lemm}
1. As $\overline{\mathcal {S}}$-module, $\Gamma(\lambda,b,d)$ is simple if and only if one of the following three conditions is satisfied
$\ (1)d\neq 0;\ (2)\lambda\notin \bZ; \ (3)\lambda\in \bZ, b\neq 0, \frac{1}{2}.$\\
2. $\Gamma(\lambda_1,b_1,d_1)\cong \Gamma(\lambda_2,b_2,d_2)$ if and only if $\lambda_1-\lambda_2\in\bZ, b_1=b_2, d_1=d_2$ or
$\lambda_1\notin\bZ, \lambda_1-\lambda_2\in\bZ, b_1=\frac{1}{2}, b_2=0, d_1=d_2=0$ or $\lambda_1\notin\bZ, \lambda_1-\lambda_2\in\bZ, b_1=0, b_2=\frac{1}{2}, d_1=d_2=0$.\\
3. $\Gamma'(0,0,0)\cong \Gamma'(0,\frac{1}{2},0)$, where $\Gamma'(0,0,0)=\Gamma(0,0,0)/\{\bC(1\otimes v)\}$,
$\Gamma'(0,\frac{1}{2},0)=\mbox{span}\{t^m\otimes v, t^k\xi\otimes v\mid m,k\in\bZ, k\neq 0\}$.
\end{lemm}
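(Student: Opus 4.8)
The plan is to reduce everything to the explicit action on the basis $\{t^m\otimes v,\ t^m\xi\otimes v\mid m\in\bZ\}$ of $\Gamma(\lambda,b,d)=\mathcal A\otimes\bC v$, using that each weight space $\Gamma(\lambda,b,d)_{\lambda+m}=\bC(t^m\otimes v)\oplus\bC(t^m\xi\otimes v)$ is two-dimensional with even part $\bC(t^m\otimes v)$ and odd part $\bC(t^m\xi\otimes v)$, and to treat the three parts in turn.

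For Part 1 I would analyze a nonzero submodule $W$ weight space by weight space (note $W$ is $L_0$-stable, and one checks via $L_0,G_0$ that the relevant submodules are graded). The decisive transitions are: $G_i$ sends $t^m\xi\otimes v$ to $-t^{i+m}\otimes v$ with the nonzero coefficient $-1$, so one odd vector in $W$ forces in all even vectors; $Q_i$ sends $t^m\otimes v$ to $-d\,t^{i+m}\xi\otimes v$, so when $d\neq0$ one even vector forces in all odd vectors; and $G_i$ sends $t^m\otimes v$ to $(\lambda+m+2ib)t^{i+m}\xi\otimes v$, allowing even vectors to generate odd ones when $d=0$. A short case analysis then gives: simplicity whenever $d\neq0$ (the pair $G_i,Q_i$ makes the module connected); and, when $d=0$, reduction to the coefficient $\lambda+m+2ib$, which for $b\neq\frac12$ is a nonconstant affine function of $m$ for each fixed target index and so is nonzero for some $m$, recovering every odd vector. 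The only way a proper submodule can then appear is through a singular vector, and I would exhibit these explicitly: $\bC(t^{-\lambda}\otimes v)$ when $\lambda\in\bZ,\ b=0,\ d=0$ (annihilated by every $L_i$ and $G_i$), and the codimension-one submodule obtained by deleting $t^{-\lambda}\xi\otimes v$ when $\lambda\in\bZ,\ b=\frac12,\ d=0$ (there $\lambda+m+i$, the coefficient controlling $L_i$ and $G_i$ into that odd slot, vanishes exactly when the target index equals $-\lambda$). This yields the stated trichotomy.

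For Part 2 the strategy is to isolate isomorphism invariants and then build explicit intertwiners. The support $\lambda+\bZ$ forces $\lambda_1-\lambda_2\in\bZ$; since $H_0$ is central in $\overline{\mathcal S}$ and acts as the scalar $d$ on all of $\Gamma(\lambda,b,d)$, any isomorphism gives $d_1=d_2$. To capture $b$ I would use the intrinsic even operator $G_{-i}G_i$, whose eigenvalues on the even and odd weight vectors of weight $w$ are $-(w+2ib)$ and $-(w+i-2ib)$; an isomorphism commutes with it and restricts to a linear isomorphism on each two-dimensional weight space, so the unordered pair is preserved, forcing $b_1=b_2$ or $b_1+b_2=\frac12$. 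Sufficiency in the generic case is the parity-preserving shift $t^m\otimes v\mapsto t^{m+k}\otimes v',\ t^m\xi\otimes v\mapsto t^{m+k}\xi\otimes v'$ (for $\lambda_1-\lambda_2=k\in\bZ$ and equal $b,d$). The exceptional identifications come from the parity-reversing map $\phi$ given by $\phi(t^m\otimes v)=t^m\xi\otimes v'$ and $\phi(t^m\xi\otimes v)=-\frac{1}{\lambda+m}\,t^m\otimes v'$; I would verify that $\phi$ intertwines the $L_i,G_i$ actions precisely when $\{b_1,b_2\}=\{\frac12,0\}$, that it requires $\lambda+m\neq0$ for all $m$ (that is, $\lambda\notin\bZ$), and that the $Q_i$-relation forces $d=0$. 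Running the same computation in reverse (allowing general scalar coefficients $a_m,c_m$ on $\phi$) shows $\{0,\frac12\}$ is the only solution among $b_1+b_2=\frac12$, so no further isomorphisms occur.

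For Part 3 I would specialize $\phi$ to $\lambda=0$, where the coefficient $-1/m$ is undefined only at $m=0$. Here $\Gamma(0,0,0)$ has the trivial submodule $\bC(1\otimes v')$ and $\Gamma(0,\frac12,0)$ has the codimension-one submodule $\Gamma'(0,\frac12,0)$ from Part 1, which omits exactly $t^0\xi\otimes v$. Restricting $\phi$ to $\Gamma'(0,\frac12,0)$ therefore never meets the undefined value, its image is the span of all $t^m\xi\otimes v'$ together with $t^k\otimes v'$ for $k\neq0$ (a complement to $\bC(1\otimes v')$), and composing with the projection $\Gamma(0,0,0)\to\Gamma'(0,0,0)$ gives the claimed isomorphism. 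The main obstacle throughout is Part 2's exceptional case: one must recognise that $b$ is an invariant only up to the involution $b\mapsto\frac12-b$ coupled with a parity swap, pin down that the extra identification occurs exactly for $\{0,\frac12\}$ and only when $\lambda\notin\bZ$ and $d=0$, and control both the well-definedness of the parity-reversing intertwiner $\phi$ and its degeneration at $\lambda\in\bZ$; once $\phi$ is in hand, the remaining verifications are bookkeeping on the explicit formulas.
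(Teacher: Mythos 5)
Your proposal is correct, and it reaches the same structural reduction as the paper --- namely that $H_i$ and $Q_i$ act through the single scalar $d$, so that $d\neq 0$ gives simplicity immediately and $d=0$ collapses the problem to the underlying $\overline{\frak s}$-module structure --- but from that point on you take a genuinely more self-contained route. The paper simply invokes Lemma 3.10 of \cite{CLL} for both the simplicity criterion in the $d=0$ case and the isomorphism classification, and declares part 3 to be ``verified directly''; you instead reprove the \cite{CLL} content from the explicit formulas: the singular vector $t^{-\lambda}\otimes v$ for $(\lambda,b,d)=(\lambda,0,0)$ with $\lambda\in\bZ$, the codimension-one submodule omitting $t^{-\lambda}\xi\otimes v$ for $b=\tfrac12$, the invariants $H_0$ (giving $d_1=d_2$) and $G_{-i}G_i$ (whose eigenvalue pair $\{-(w+2ib),-(w+i-2ib)\}$ pins $b$ down up to the involution $b\mapsto\tfrac12-b$), and the explicit parity-reversing intertwiner $\phi(t^m\otimes v)=t^m\xi\otimes v'$, $\phi(t^m\xi\otimes v)=-\tfrac{1}{\lambda+m}t^m\otimes v'$, which I have checked does intertwine all of $L_i,G_i,H_i,Q_i$ exactly under the stated constraints ($\{b_1,b_2\}=\{0,\tfrac12\}$, $d=0$, $\lambda\notin\bZ$) and whose degeneration at $\lambda=0$ cleanly produces the subquotient isomorphism of part 3. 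What your approach buys is a proof that does not depend on the external reference and that makes the exceptional identification and its parity-reversing nature explicit (consistent with the paper's ``up to a parity change'' conventions). Two details you should still write out in a full version: the standard argument that a nonzero submodule contains a homogeneous weight vector (a weight vector $\alpha\,t^m\otimes v+\beta\,t^m\xi\otimes v$ with $\alpha\beta\neq 0$ can be an eigenvector of $G_0$, so you must use $L_i$ and $G_i$ for $i\neq 0$ jointly to fill a full two-dimensional weight space before splitting parities), and the verification that the $L_i$-relations impose no further constraints in your general-coefficient analysis of $\phi$; both are routine and do not affect the correctness of the plan.
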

\begin{proof}
1. Since
$$H_i\circ (t^m\otimes v)=dt^{i+m}\otimes v,\ H_i\circ (t^m\xi\otimes v)=dt^{i+m}\xi\otimes v,$$
it is easy to see that $\Gamma(\lambda,b,d)$ is simple if $d\neq 0$. Note that $\overline{\mathcal {S}}=\overline{\frak s}\ltimes \mathcal A$, so $\Gamma(\lambda,b,d)$ is also an $\overline{\frak s}$-module. We have that $\Gamma(\lambda,b,d)$ is simple if $\lambda\notin \bZ$ or $\lambda\in \bZ, b\neq 0, \frac{1}{2}$ from Lemma 3.10 in \cite{CLL}.

2. Suppose $\Gamma(\lambda_1,b_1,d_1)\cong \Gamma(\lambda_2,b_2,d_2)$, we have $d_1=d_2$ obviously. Then this conclusion holds from Lemma 3.10 in \cite{CLL}.

3. This conclusion can be verified directly.
\end{proof}

At this point, for the simple Harish-Chandra module over $N=1$ Heisenberg-Virasoro algebra $\mathcal {S}$, we have the following conclusion.
\begin{theo}\label{main1}
Let $M$ be a simple Harish-Chandra module over $\mathcal {S}$. Then $M$ is a highest weight module, a lowest weight module, or $\Gamma'(\lambda,b,d), \Pi(\Gamma'(\lambda,b,d))$
for some $\lambda,b,d\in \bC$.
\end{theo}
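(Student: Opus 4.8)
The plan is to reproduce, for $\mathcal{S}$, the cuspidal/non-cuspidal dichotomy that proved Theorem \ref{main}. The structural warning is that $\mathcal{S}$ is \emph{not} an instance of $\widehat{\frak g}$: its mixed central term $C_{L,\alpha}$, which appears in $[L_i,H_j]$ and in $[G_i,Q_j]$, has no counterpart in $\widehat{\frak g}$, so Theorem \ref{main} cannot be invoked as a black box and the two branches must be re-derived from the tools already assembled above.

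For the cuspidal branch the reduction is immediate. If $M$ is cuspidal then $C_L$, $C_{L,\alpha}$ and $C_\alpha$ all act trivially (as recorded above), so $M$ is a simple cuspidal $\overline{\mathcal{S}}$-module. Since $\overline{\mathcal{S}}$ is the instance of $\frak g=\overline{\frak s}\ltimes(\dot{\frak g}\otimes\mathcal{A})$ with $\dot{\frak g}=\bC Z$, $H_i=Z\otimes t^i$, $Q_i=-Z\otimes t^i\xi$, Theorem \ref{th:4.16}, applied with $\dot{\frak g}=\bC Z$, shows that up to a parity change $M$ is a simple quotient of some tensor module $\Gamma(\lambda,b,d)$. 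The classification lemma for $\Gamma(\lambda,b,d)$ (its simplicity criterion together with its unique nontrivial sub-quotient $\Gamma'(\lambda,b,d)$) then identifies this simple quotient with $\Gamma'(\lambda,b,d)$, so that $M\cong\Gamma'(\lambda,b,d)$ or $\Pi(\Gamma'(\lambda,b,d))$ (a degenerate trivial quotient being absorbed into the highest-weight case).

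For the non-cuspidal branch I would transplant the proof of Lemma \ref{lm:5.2} to $\mathcal{S}$. The grading of $\mathcal{S}$ by $L_0$-eigenvalue has $\dim\mathcal{S}_n=4$ for $n\neq0$, spanned by $L_n,G_n,H_n,Q_n$, which is exactly $2+2l$ with $l=\dim\dot{\frak g}=1$; hence the dimension count is literally that of Lemma \ref{lm:5.2}. Concretely, non-cuspidality yields a $k\in\bN$ with $\dim M_{\lambda-k}>4\dim M_\lambda+2\dim M_{\lambda+1}$, forcing a nonzero $\omega\in M_{\lambda-k}$ annihilated by the six operators $L_k,L_{k+1},G_k,G_{k+1},H_k,Q_k$. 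From the bracket table one propagates these relations to high degree: $[L_a,L_b]=(b-a)L_{a+b}$ gives $L_p\omega=0$ for $p$ in the numerical semigroup generated by $k$ and $k+1$, and then $[L_a,G_b]=(b-\tfrac{a}{2})G_{a+b}$, $[L_a,H_b]=bH_{a+b}$ and $[L_a,Q_b]=(b+\tfrac{a}{2})Q_{a+b}$ (with indices chosen so the coefficients are nonzero) give $G_p\omega=H_p\omega=Q_p\omega=0$ for all large $p$. Thus $\mathcal{S}_p\omega=0$ for $p\geq k^2$, the set $M'=\{v\in M\mid\dim\mathcal{S}_+v<\infty\}$ is a nonzero, hence full, submodule, and Lemma \ref{lm:5.1} forces $\mathrm{supp}(M)$ to be bounded above (or, symmetrically, below), i.e. $M$ is a highest (or lowest) weight module.

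The step I expect to be the genuine obstacle is not either branch individually but the reason they must be carried out by hand: the extra central element $C_{L,\alpha}$ blocks a one-line appeal to Theorem \ref{main}. For the propagation argument this obstruction is ultimately harmless, because $C_{L,\alpha}$ (like every central element) lives in degree $0$, whereas the relations $\mathcal{S}_p\omega=0$ are established only for $p\geq k^2>0$, where no central term can occur; the real content is therefore the routine but essential verification of the bracket identities $[\mathcal{S}_i,\mathcal{S}_j]=\mathcal{S}_{i+j}$ for $j\neq0$ and of the precise codimension bound on the common kernel of the six operators. Combining the two branches then yields the stated trichotomy.
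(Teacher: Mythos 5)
Your proposal is correct and follows essentially the same route as the paper: the cuspidal branch is exactly the paper's reduction (central elements act trivially on cuspidal modules, $\overline{\mathcal{S}}=\overline{\frak s}\ltimes(\bC Z\otimes\mathcal{A})$, Theorem \ref{th:4.16}, then the lemma identifying the unique nontrivial sub-quotient $\Gamma'(\lambda,b,d)$), and the non-cuspidal branch is the proof of Lemma \ref{lm:5.2} with $l=1$, which the paper leaves implicit but which transplants exactly as you describe since the extra central element $C_{L,\alpha}$ lives in degree $0$ and so never enters the relations $\mathcal{S}_p\omega=0$ for $p>0$. Your remark that $\mathcal{S}$ is not literally an instance of $\widehat{\frak g}$ is well taken (indeed the more delicate point, which both you and the paper gloss over, is that $\dot{\frak g}=\bC Z$ is abelian rather than simple, so Theorems \ref{th:4.13} and \ref{th:4.16} are being applied slightly outside their stated hypotheses --- harmlessly, since none of the arguments in Sections 3--4 use simplicity of $\dot{\frak g}$ in an essential way), but this does not alter the argument.
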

\noindent {\bf Acknowledgement} The authors are grateful to the referee for insightful comments and
suggestions. This work was supported by NSF of China (Grant Nos. 12101082, 12071405 and 12271383) and Jiangsu Provincial Double-Innovation Doctor Program (Grant No. JSSCBS20210742).

\section*{References}

\end{document}